\theoremstyle{thmstyleone}%
\newtheorem{theorem}{Theorem}[section]
\newtheorem{lemma}{Lemma}[section]
\newtheorem{corollary}{Corollary}[theorem]
\theoremstyle{thmstyletwo}%
\theoremstyle{thmstylethree}%
\newtheorem{assumption}{Assumption}[section]%
\newtheorem{example}{Example}[section]%
\newcommand{\rb}[1]{ \left( #1 \right) }
\newcommand{\rrb}[1]{ \left\lbrace #1 \right\rbrace }
\newcommand{\curbr}[1]{ \lbrace #1 \rbrace }
\newcommand{\abs}[1]{\left|#1\right|}
\newcommand{\floorfct}[1]{\left\lfloor #1 \right\rfloor}
\newcommand{\norm}[1]{\left\lVert#1\right\rVert}
\newcommand{\snorm}[1]{\lVert#1\rVert}
\newcommand{\suml}{\sum\limits}
\newcommand{\prodl}{\prod\limits}
\newcommand{\intl}{\int\limits}
\newcommand{\liml}{\lim\limits}
\newcommand{\supl}{\sup\limits}
\newcommand{\LandauO}{ \mathcal{O} }
\newcommand{\IndNr}[1]{\mathds{1}_{\left\lbrace#1\right\rbrace}}
\newcommand{\BigS}{\operatorname{S}}
\newcommand{\C}{\mathbb{C}}
\newcommand{\Kbb}{\mathbb{K}}
\newcommand{\Lbb}{\mathbb{L}}
\newcommand{\N}{\mathbb{N}}
\newcommand{\R}{\mathbb{R}}
\newcommand{\Tbb}{\mathbb{T}}
\newcommand{\Z}{\mathbb{Z}}
\newcommand{\ONE}{\mathds{1}}
\newcommand{\Beta}{\operatorname{B}}
\newcommand{\Tau}{\operatorname{T}}
\begin{document}

\title[Article Title]{Deconvolution of distribution functions without integral transforms}

\author*[1]{\fnm{Henrik} \sur{Kaiser}}\email{henrik.kaiser@plus.ac.at}

\affil*[1]{\orgdiv{Dept. of Artificial Intelligence and Human Interfaces}, \orgname{Paris Lodron University}, \orgaddress{\street{Hellbrunner Strasse 34}, \city{Salzburg}, \postcode{5020}, \country{Austria}}}

\abstract{We study the recovery of the distribution function $F_X$ of a random variable $X$ that is subject to an independent additive random error $\varepsilon$. To be precise, it is assumed that the target variable $X$ is available only in the form of a blurred surrogate $Y = X + \varepsilon$. The distribution function $F_Y$ then corresponds to the convolution of $F_X$ and $F_\varepsilon$, so that the reconstruction of $F_X$ is some kind of deconvolution problem. Those have a long history in mathematics and various approaches have been proposed in the past. Most of them use integral transforms or matrix algorithms. The present article avoids these tools and is entirely confined to the domain of distribution functions. Our main idea relies on a transformation of a first kind to a second kind integral equation. Thereof, starting with a right-lateral discrete target and error variable, a representation for $F_X$ in terms of available quantities is obtained, which facilitates the unbiased estimation through a $Y$-sample. It turns out that these results even extend to cases in which $X$ is not discrete. Finally, in a general setup, our approach gives rise to an approximation for $F_X$ as a certain Neumann sum. The properties of this sum are briefly examined theoretically and visually. The paper is concluded with a short discussion of operator theoretical aspects and an outlook on further research. Various plots underline our results and illustrate the capabilities of our functions with regard to estimation.}

\keywords{deconvolution, measurement errors, distribution functions, curve estimation, discrete deconvolution, inverse problems}

\pacs[MSC Classification]{60E05, 62G05, 62G07, 45A05}

\maketitle

\section{Introduction}

Let $F_X$ be the distribution function (d.f. or d.fs., for short) of a random variable $X$, and let $X_1, \hdots, X_n \sim F_X$ be an associated sample of independent, identically distributed (i.i.d.) observations, for $n\in\N$. Then, a well-known consistent estimator for $F_X$ is the \textit{empirical distribution function} (abbr.: e.d.f. or e.d.fs.)
\begin{align} \label{MeasErr1}
F_X(\xi, n) := \frac{1}{n} \suml_{k=1}^n \ONE_{ \curbr{ X_k \leq \xi } } \hspace{1cm} (\xi\in\R).
\end{align}
Often, however, $X$ is unobservable. Such a situation is the subject of the present article, assuming accessibility of $X$ only through a surrogate variable $Y$, which differs from $X$ by an additive independent random \textit{error} or \textit{noise} $\varepsilon$. Formally,
\begin{align} \label{ZVFaltung}
Y = X+\varepsilon,
\end{align}
with independent $X$ and $\varepsilon$. The above setting is known as the \textit{additive model of errors in variables}. The variable $Y$ can be conceived in multiple ways, e.g., as an an imprecise or blurred measurement, due to random effects. Thus, deconvolution is relevant in many fields, such as medicine and econometry \cite[see][]{Sarkar2014, WrightEtAl2014, Bonhomme2010, GonzalezEtAl2016}. In the described situation, the d.f. of $Y$, written $F_Y$, is represented by the \textit{additive convolution} of $F_X$ with the d.f. $F_\varepsilon$ of $\varepsilon$, that is
\begin{align} \label{Verteilungsfaltung}
F_Y(\xi) = \intl_{-\infty}^\infty F_X(\xi-z)F_\varepsilon(dz) \hspace{1cm} (\xi \in \R).
\end{align}
Here, we simply speak of convolution without a prefix, since there will be no danger of confusion with other kinds of convolution. Theoretically, the d.fs. $F_Y$ and $F_\varepsilon$ are both supposed to be completely known, but in practice at least a $Y$-sample will serve to estimate $F_Y$ by virtue of its empirical analogue. With regard to $F_\varepsilon$, various assumptions are common. Specifically if $F_X$ is absolutely continuous with density $f_X$, the d.f. $F_Y$ has the density
\begin{align} \label{Dichtenfaltung}
f_Y(\xi) = \intl_{-\infty}^\infty f_X(\xi-z) F_\varepsilon(dz) \hspace{1cm} (\xi\in\R).
\end{align}
However, this text focusses on d.fs. rather than densities, since we consider the assumption of the actual existence of a target density as too restrictive. Moreover, density estimation already bears various difficulties in case of unblurred observations. In fact, \cite{parzen1962, Rosenblatt1956} were able to show that an unbiased estimator does then not exist. The above kind of integrals in older texts are rather known as \textit{(Stieltjes) resultant}, and some authors even use the German word \textit{Faltung} (see, e.g., \cite[][p. 51--52]{titchmarsh1937} or \cite[][p. 84]{widder1946}). The inverse convolution, i.e., the reconstruction of $F_X$ or $f_X$, is called \textit{deconvolution} and amounts to solving an integral equation of the first kind. Due to the complicated structure of convolution products, it can be a nasty problem. An historical overview on convolution is provided by \cite{Dominguez2015}. For a general introduction to measurement errors in statistics, the reader may consult \cite{Meister2009, Yietal2021}.
\\
\hspace*{1em}Chapters on deconvolution can be found in many textbooks on analysis, with a main focus on Fourier analysis (cf. \cite[Ch. XI]{titchmarsh1937}, \cite[][Ch. V, $\S$8]{widder1946}, \cite[][$\S$1.9]{tricomi1985integral} and \cite{HirschmanWidder1955}). The reason is that convolution corresponds to the multiplication of Fourier transforms. To be more precise, if we denote by
\begin{align} \label{DefcFX}
\Phi_X(t) := \intl_{-\infty}^\infty e^{itx} F_X(dx) \hspace{1cm} (t\in\R)
\end{align}
the characteristic function of $X$ (c.f. or c.fs., for brevity), i.e., the \textit{Fourier-Stieltjes transform of $F_X$}, then the convolution equation (\ref{Verteilungsfaltung}) is equivalent to
\begin{align} \label{additFiVcF}
\Phi_Y = \Phi_X\Phi_\varepsilon.
\end{align}
Notice that a consideration of convolution in the Fourier domain is not a restriction, as c.fs. exist for any distribution. Moreover, all d.fs. uniquely can be identified by their c.fs. and even reconstructed via inversion formulae. Equation (\ref{additFiVcF}) immediately warns us that $\Phi_X$ is only identifiable if $\Phi_\varepsilon = 0$ on a set of Lebesgue measure zero. Then, $\Phi_X = \curbr{\Phi_\varepsilon}^{-1} \Phi_Y$ (within the zero set, this holds by continuity), and inversion directly yields $F_X$ (or $f_X$, if existing). The fact that $\Phi_X$ may not be identifiable leads to unboundedness of the inverse operator, when considering convolution on function spaces. Accordingly, following the characterization of well-posedness due to \cite{Hadamard1902}, deconvolution is often classified an \textit{ill-posed inverse problem}.
\\
\hspace*{1em}Deconvolution becomes even more challenging if $\Phi_Y$ is only estimable. The article \cite{StefCarr1988} can be considered the first contribution to this area. Due to the independence of $X$ and $\varepsilon$, any sample $Y_1, \hdots, Y_n \sim F_Y$ unrestrictedly can be assumed i.i.d., so that the \textit{empirical characteristic function} (abbr.: e.c.f. or e.c.fs.) of $Y$, viz
\begin{align*}
\Phi_Y(t,n)  :=&\, \intl_{-\infty}^\infty e^{ity} F_Y(dy, n) \hspace{1cm} (t \in \R),
\end{align*}
may serve as a straightforward consistent plug-in estimator for $\Phi_Y$. Under the assumption of a known error distribution, in view of (\ref{additFiVcF}), in \cite{StefCarr1988}, it was proposed to estimate $\Phi_X$ through $\curbr{\Phi_\varepsilon}^{-1}\Phi_Y(\cdot, n)$ and eventually also the density $f_X$ by means of a Fourier inversion formula and a suitable smoothing kernel. A corresponding estimator for the d.f. thereof can be obtained by integration \cite[see][]{Meister2009}. These kernel estimators are biased with respect to the target and share similar properties, which substantially vary with the predetermined error distribution and the choice of the kernel. Basically, the approach of \cite{StefCarr1988} resembles the idea of \cite{parzen1962, Rosenblatt1956}, in case of unblurred observations. Additional difficulties, however, arise from the fact that the underlying estimator $\curbr{\Phi_\varepsilon}^{-1}\Phi_Y(\cdot, n)$ is often unbounded, due to the decay of $\Phi_\varepsilon$. Nevertheless, kernel estimators gained a wide popularity in literature. Earliest asymptotic results go back to \cite{Fan1991a, Fan1991b} and are still relevant in recent literature \cite[cf.][]{Adusumilli2020, Thuy2023, Zhu2022, DattnerEtAl2011, Hesse2006}. Attempts to construct kernel estimators that overcome classical problems usually result in strong assumptions on the associated distributions; see \cite{BelomGolden2020a, GoldenshlugerKim2021, Hall2007} and \cite[][$\S$2.2.3]{Meister2009}. As alternatives to the predominant Fourier methods, we mention wavelet-based estimators (cf. \cite{Cao2023} and \cite[][$\S$2.2.2]{Meister2009}) and maximum likelihood methods \cite{Guan2021}.
\\
\hspace*{1em}The assumption of a known error distribution is a common starting point, to become familiar with the problem of errors in variables, although maybe unrealistic in practice. If $F_\varepsilon$ is not completely known, additional data is required to facilitate a characterization of this d.f. and to keep $F_X$ identifiable. Various techniques have been developed in literature, assuming the availability of information on $F_\varepsilon$ in different ways; cf. \cite{Adusumilli2020, Delaigle2016, Kato2018} and \cite[][$\S$2.6]{Meister2009}.
\\
\hspace*{1em}In the additive model of errors in variables, if $\Tbb_X$ and $\Tbb_\varepsilon$ denote the supports of the indicated variables, the probability mass function (p.m.f. or p.m.fs., for short) of $F_Y$ is determined by those of $F_X$ and $F_\varepsilon$ through
\begin{align} \label{MassFaltung}
F_Y\curbr{y} = \suml_{ \substack{ (x, z) \in \Tbb_X \times \Tbb_\varepsilon \\ x+z = y } } F_X\curbr{x} F_\varepsilon\curbr{z} \hspace{1cm} (y \in \R).
\end{align}
This convolution equation becomes particularly interesting if $F_X$ and $F_\varepsilon$ are both purely discrete, with left-bounded $\Tbb_X$ and $\Tbb_\varepsilon$. Then, $F_X$ is uniquely determined by its atoms, which can be recovered from those of $F_Y$ and $F_\varepsilon$ as the solution of a linear equation system. For instance, assuming that $\Tbb_X = \Tbb_\varepsilon = \N_0$, it is clear that also $\Tbb_Y = \N_0$. Hence, for an arbitrary $K \in \N$, the convolution equation (\ref{MassFaltung}) implies that $F_Y\curbr{k} = \sum_{z=0}^k F_X\curbr{k-z} F_\varepsilon\curbr{z}$, for all $0 \leq k \leq K$. In terms of the matrix
\begin{align*}
M_K :=
\begin{pmatrix}
F_\varepsilon\curbr{0} & F_\varepsilon\curbr{1} & \hdots & F_\varepsilon\curbr{K} \\
0 & F_\varepsilon\curbr{0} & \hdots & F_\varepsilon\curbr{K-1} \\
\vdots & \vdots & \ddots & \vdots \\
0  & 0 & \hdots & F_\varepsilon\curbr{0}
\end{pmatrix}
\in \R^{(K+1) \times (K+1)}
,
\end{align*}
this is equivalent to
\begin{align*}
\begin{pmatrix}
F_Y\curbr{K} \\
\vdots \\
F_Y\curbr{0}
\end{pmatrix}
=
M_K
\begin{pmatrix}
F_X\curbr{K} \\
\vdots \\
F_X\curbr{0}
\end{pmatrix}
.
\end{align*}
Therefore, if $M_K$ is invertible, i.e., $\operatorname{det} M_K > 0$ or simply $F_\varepsilon\curbr{0}>0$, one directly obtains a representation for the vector $( F_X\curbr{K}, \hdots, F_X\curbr{0} )$ in terms of $F_Y$ and $F_\varepsilon$. This technique requires a tremendous computational effort as $K$ increases. \textit{Discrete deconvolution}, as it is called, has been discussed in many fields, however, apparently not so often with a probabilistic background. Most contributions aim for efficient inversion algorithms. Again, Fourier methods are of frequent use, specifically the discrete Fourier transform, i.e., the Fourier-Stieltjes transform of a discrete measure \cite[cf.][]{SilvermanPearson1973, HallQiu2005, deHoog2018}. On the other hand, some authors tackle the problem directly in the matrix domain \cite[see][]{HuckleSed2013, ZhigljavskyEtAl2016}.
\\
\hspace*{1em}The interest in deconvolution during the last years, at least with a probabilistic background, appears to be on a constant ordinary level, with a lack of novel ideas. In particular, to the best of our knowledge, there still is no generally feasible technique to estimate the d.f. of an arbitrary random quantity $X$ that is subject to additive measurement errors. Moreover, even in setups where the existence of a non-parametric unbiased estimator for $F_X$ is obvious, such an estimator has not yet been established. These considerations motivated this work. As the title suggests, unlike most authors, we do not use integral transforms and instead completely conduct all of our research in the domain of d.fs., starting with a transformation of the respective convolution equation to an integral equation of the second kind. For a better insight on the challenges in deconvolution, our first study is dedicated to simpler setups. To be precise, we first derive a general formula for discrete deconvolution, that we eventually apply to various probabilistic scenarios. Our initial idea relies on the observation that the $k$-th jump point of $F_Y$, in case of non-negative integer-valued $X$ and $\varepsilon$, merely depends on the first $k$ consecutive atoms of $F_X$ and $F_\varepsilon$, from which conversely a recursion for the p.m.f. of $X$ can be obtained. The pattern behind this recursion unfolds through a technique that originates in the theory of integral equations, and gives rise to a finite representation for the p.m.f. of $X$, solely in terms of quantities that depend on $Y$ and $\varepsilon$. Finally, the transition to d.fs. is straightforward, and an unbiased non-parametric estimator for $F_X$ can be established. Subsequently, we vary our assumptions on $X$ and $\varepsilon$, before we eventually turn to arbitrary setups. In these, we are able to approximate $F_X$ or even $f_X$ through a Neumann sum, whose convergence is examined in a special case, followed by a simulation study for normally distributed errors. Also, a short discussion of the invertibility of our convolution operator is included. We finish our study with a glimpse into the Fourier domain and an outlook on future goals, aiming to develop an unrestricted deconvolution estimator.

\section{Notation and preliminaries} \label{SecNotNPrel}

For an arbitrary function $Q : \R \rightarrow \R$, without loss of generality, we write $Q(\xi-)$ and $Q(\xi+)$, respectively, for the limit from the left and from the right at $\xi \in \R$, with $Q\curbr{\xi} := Q(\xi+)- Q(\xi-)$. The $Q$-atoms, i.e., discontinuities of $Q$, are the set $D_Q := \curbr{ \xi \in \R : Q\curbr{\xi} \neq 0 }$, and $C_Q := \R \setminus D_Q$ are the associated continuity points/intervals. Furthermore, $Q(\pm\infty) := \lim_{\xi \rightarrow \pm\infty} Q(\xi)$, whenever one of the indicated limits exists. If both exist and $Q$ is continuous on $\R$, we say that it is continuous on $\overline \R := \R \cup \curbr{\pm \infty}$. In addition, $Q$ is \textit{right-lateral} (\textit{left-lateral}) if there exists $\xi_0 \in \R$, with $Q(\xi) = 0$, for all $\xi < \xi_0$ ($\xi > \xi_0$). As usual, $\ONE_\mathcal{M}$ refers to the indicator of the set $\mathcal{M} \subset \overline \R$, and $\snorm{ Q }_p$, for $0 < p \leq \infty$, stands for the $L^p$-norm on $\R$ (with respect to the Lebesgue measure). Also, $\ell^1(\Kbb) := \curbr{ ( a(k) )_{k \in \Kbb} \subset \C : \sum_{k \in \Kbb} |a(k)| < \infty }$, for $\Kbb \subseteq \Z$. Letting $\mathcal{B}(\R)$ be the Borel $\sigma$-algebra on $\R$, the set $A +x B := \curbr{ a+xb : (a, b) \in A\times B }$ refers to the Minkowski linear combination of $A, B \in \mathcal{B}(\R)$, with fixed $x \in \R \setminus \curbr{0}$. The big $\LandauO$ and small $o$ have their usual meaning, and we indicate by $i$, $\Re z$, $\Im z$ and $\overline z$, respectively, the imaginary unit, the real part, the imaginary part and the complex conjugate of $z \in \C$. Finally, we agree that empty sums equal zero and empty products are equal to one.
\\
\hspace*{1em}A \textit{signed measure} $\mu : \mathcal{B}(\R) \rightarrow \overline\R$ is a countably additive mapping, i.e., $\mu(\bigcup_{j=1}^\infty A_j) = \sum_{j=1}^\infty \mu(A_j)$, for every disjoint sequence $(A_j)_{j\in \N} \subset \mathcal{B}(\R)$. If even $\mu \geq 0$, then $\mu$ simply is a \textit{measure}. Moreover, sometimes, we use the notion of a \textit{complex measure}, by which we mean compositions of the form $\mu := \mu_1 + i\mu_2$, for signed measures $\mu_1, \mu_2 : \mathcal{B}(\R) \rightarrow \overline \R$. In any case, if $|\mu(A)| < \infty$, for all $A \in \mathcal{B}(\R)$, we add the prefix \textit{finite}. The finite signed ($\Kbb = \R$) and complex ($\Kbb = \C$) measures on $(\Kbb, \mathcal{B}(\R))$, respectively, form the vector spaces $\mathcal{M}(\Kbb, \mathcal{B}(\R))$. Clearly, $\mathcal{M}(\R, \mathcal{B}(\R)) \subset \mathcal{M}(\C, \mathcal{B}(\R))$. Finally, $\mu$ is a \textit{probability measure}, if $0 \leq \mu \leq 1$. Specifically $\delta_{ \curbr{x} }$ indicates the Dirac measure with mass at $x \in \R$. Now, the support $\Tbb_\mu$ of a complex measure $\mu$ is characterized by the property that $\mu(A) = \mu(A \cap \Tbb_\mu)$, for each $A \in \mathcal{B}(\R)$, and the the \textit{total variation} on $A \in \mathcal{B}(\R)$ \cite[see][$\S$9A]{axler2019measure} is defined as
\begin{align*}
|\mu|(A) := \sup\rrb{ \suml_{k=1}^K |\mu(A_k)| : K \in \N, ~ A_1, \hdots, A_K \in \mathcal{B}(\R) \operatorname{disjoint, with} \bigcup\limits_{k=1}^K A_k \subset A }.
\end{align*}
It is known that $|\mu|(A)<\infty$, for all $A \in \mathcal{B}(\R)$ and $\mu \in \mathcal{M}(\C, \mathcal{B}(\R))$, i.e., all finite complex measures are of \textit{finite total variation} on each Borel set. In particular, the mapping $| \mu | : \mathcal{B}(\R) \rightarrow [0, \infty]$ generally is a measure, and even finite, for all $\mu \in \mathcal{M}(\C, \mathcal{B}(\R))$. The d.f. induced by a complex measure $\mu : \mathcal{B}(\R) \rightarrow \overline \R + i \overline \R$ on $\R$ is denoted by $F_\mu(\xi) := \mu( (-\infty, \xi] )$, for $\xi \in \R$. If $\mu \in \mathcal{M}(\C, \mathcal{B}(\R))$, the limits $F_\mu(\pm\infty)$ exist and hence $F_\mu \in L^\infty(\R)$, i.e., using the terminology of older texts, $F_\mu : \R \rightarrow \C$ is of \textit{bounded variation} on $\overline \R$ \cite[see][$\S$2.1]{wheeden2015measure}. In particular, $\mu(dx) = F_\mu(dx)$. For that reason, following the convention for functions of bounded variation, we may also refer to Lebesgue integrals with respect to complex measures as Stieltjes integrals, and vice versa, and to $\Phi_\mu(t) := \int_{-\infty}^\infty e^{itx} F_\mu(dx)$ as the \textit{Fourier-Stieltjes transform}. The latter represents a complex-valued uniformly continuous function of $t \in \R$, for each $\mu \in \mathcal{M}(\C, \mathcal{B}(\R))$. Finally, any sequence $( p(z) )_{z \in \Z} \subset \C$ can be identified with a discrete complex measure of the form $\sum_{z\in\Z} p(z) \delta_{ \curbr{z} }$, however, which is possibly of infinite total variation on unbounded sets. For the associated d.f., we write
\begin{align*}
\Theta\curbr{p}(\xi) := \suml_{ z \in \Z } p(z) \IndNr{ \xi \geq z } = \suml_{ z = -\infty }^{ \floorfct{\xi} } p(z) \hspace{1cm} (\xi \in \R).
\end{align*}
In the sequel, all sequences are right-lateral, so that the sum is always finite, yet, possibly divergent as $\xi \rightarrow \infty$.
\\
\hspace*{1em}Every probability measure corresponds to a random variable. Conversely, for a random variable $B$, we indicate by $\mu_B$, $\Tbb_B$, $F_B$ and $\Phi_B$, respectively, the associated probability measure, its support, d.f. and Fourier-Stieltjes transform, for which we use the more common terminology of a c.f..  Empirical analogues and possibly existing density are denoted by $F_B(\cdot, n)$, $\Phi_B(\cdot, n)$ and $f_B$, respectively. Furthermore, $F_B\curbr{x}$, for $x \in \R$, refers to the p.m.f.. The distribution of $B$ is \textit{right-lateral} (\textit{left-lateral}) if and only if $\Tbb_B$ is bounded to the left (right), and otherwise it is \textit{bilateral}. Any c.f. satisfies $\Phi_B(0) = 1$ and $0 \leq \abs{ \Phi_B } \leq 1$, with complex conjugate $\overline{ \Phi_B(t)} = \Phi_B(-t)$, for all $t \in \R$. Moreover, the c.f. reflects the kind of distribution. On the one hand, $\Phi_B$ is almost periodic in the sense of Bohr \cite[see][]{Bohr1932}, if and only if $\mu_B$ is discrete. This is equivalent to the existence of $L_\varepsilon > 0$, given an arbitrary $\varepsilon > 0$, such that each interval of length $L_\varepsilon$ contains a number $\tau_\varepsilon$ with $\abs{\Phi_B(t+\tau_\varepsilon)-\Phi_B(t)}\leq \varepsilon$, for all $t\in\R$. On the other hand, a necessary condition for $\Phi_B$ to vanish at infinity is continuity of $\mu_B$, particularly absolute continuity being sufficient.
\\
\hspace*{1em}Basically, there are two main types of convolution. Firstly, the convolution of complex measures $\mu, \nu : \mathcal{B}(\R) \rightarrow \overline \R + i\overline \R$ is given by the integral
\begin{align} \label{2025070901}
(\mu \ast \nu)(A) := \intl_\R \intl_\R \ONE_A(x+y) \nu(dx) \mu(dy) \hspace{1cm} (A \in \mathcal{B}(\R)),
\end{align}
which is finite, e.g., if $|\nu|( A - \Tbb_\mu ) < \infty$ and $|\mu|( A - \Tbb_\nu ) < \infty$, in which case even $|\mu\ast\nu|(A) < \infty$. Since $\abs{(\mu \ast \nu)(A)} \leq \abs{\mu}(\R)\abs{\nu}(\R)$, uniformly with respect to $A \in \mathcal{B}(\R)$, the convolution of $\mu, \nu \in \mathcal{M}(\C, \mathcal{B}(\R))$ is always well-defined, with $\mu \ast \nu \in \mathcal{M}(\C,\mathcal{B}(\R))$. Specifically $(F_\mu\ast F_\nu)(\xi) := (\mu \ast \nu)((-\infty, \xi])$ is the convolution of the corresponding d.fs. $F_\mu$ and $F_\nu$, referred to as the \textit{Stieltjes convolution} or \textit{Stieltjes resultant}, in the classical fashion. Moreover, if $\mu := \sum_{k=-\infty}^\infty p(k) \delta_{\curbr{k}}$ and $\nu := \sum_{k=-\infty}^\infty q(k) \delta_{\curbr{k}}$ are associated with two sequences $( p(\ell) )_{\ell \in \Z}, ( q(\ell) )_{\ell \in \Z} \subset \C$, then $(p \ast q)(\ell) := (\mu \ast \nu)(\curbr{\ell}) = \sum_{z = -\infty}^\infty p(\ell-z)q(z)$, for $\ell \in \Z$, is known as the \textit{discrete convolution}. It is always well-defined, whenever both sequences are either right- or left-lateral, resulting in a unilateral sequence again. In the right-lateral case, we have
\begin{align} \label{2025020603}
(\Theta\curbr{p}\ast \Theta\curbr{q})(x) = (p \ast \Theta\curbr{q})(\floorfct{x}) = \Theta\curbr{p \ast q}(x) \hspace{1cm} (x \in \R).
\end{align}
In particular, a repeated application of this identity shows that $\Theta\curbr{p}^{\ast j}(x) = \Theta\curbr{p^{\ast j}}(x)$, for all $(j, x) \in \N_0 \times \R$. The second main type of convolution is the \textit{$L^1$-convolution}, for $f, g \in L^1(\R)$, meaning the integral $(f \ast g)(x) := \int_\R f(x - y)g(dy)$, that is well-defined for Lebesgue almost all $x \in \R$ and fulfills $f \ast g \in L^1(\R)$. In the sequel, whenever the kind of convolution is clear, prefixes will be omitted. In each of the above cases, convolution commutes and therefore can be conceived as a kind of product. Regarding the convolution of complex measures, there even exists a neutral element, namely the Dirac measure $\delta_{\rrb{0}} \in \mathcal{M}(\R, \mathcal{B}(\R))$. On the other side, also $\int_\R f(x-y) \delta_{\curbr{0}}(dy) = f(x)$, for all $f \in L^1(\R)$ and Lebesgue almost every $x \in \R$. This mixture-type convolution integral, however, admits no equivalent representation in the sense of $L^1(\R)$, since $\delta_{\curbr{0}}$ is not absolutely continuous with respect to the Lebesgue measure. Thanks to the existence of a neutral element, we can eventually define convolution powers of $\mu : \mathcal{B}(\R) \rightarrow \overline \R + i\overline \R$ through $\mu^{\ast 0} := \delta_{ \curbr{0} }$ and
\begin{align*}
\mu^{\ast j}(A) := \intl_\R \hdots \intl_\R \ONE_{A}(x_1 + \hdots x_j) \mu(dx_1) \hdots \mu(dx_j) \hspace{1cm} ((j, A) \in \N \times \mathcal{B}(\R)),
\end{align*}
the integral being well-defined, with $|\mu^{\ast j}|(A) < \infty$, if $|\mu|(A-(j-1)\Tbb_\mu) < \infty$. If even $|\mu|(A-r\Tbb_\mu) < \infty$, for all $0 \leq r \leq j-1$, the $j$-th convolution power fulfills the recursion $\mu^{\ast j} = \mu \ast \mu^{\ast (j-1)}$. All this clearly holds for $\mu \in \mathcal{M}(\C, \mathcal{B}(\R))$. Various properties of convolution powers are verified in Appendix \ref{AppConvIdMeas}, among these the binomial convolution theorem.
\\
\hspace*{1em}Finally, for a norm $\snorm{ \cdot }_V$ on a vector space $V$, it is known that the norm of the linear operator $\Tau : V \rightarrow V$ is given by $\snorm{ \Tau } = \sup\curbr{ \snorm{ \Tau v }_V : v \in V, ~ \snorm{ v }_V=1 }$. If $\snorm{ \Tau } < \infty$, then $\Tau$ is called bounded, which is equivalent to continuity. We denote by $\mathcal{L}(V)$ the space of bounded linear operators on $V$, and specifically by $\operatorname{Id}_V$ the identity operator on $V$. Then, $\Tau \in \mathcal{L}(V)$ is invertible if and only if there exists $\widetilde \Tau \in \mathcal{L}(V)$ with $\widetilde \Tau \Tau = \operatorname{Id}_V = \Tau \widetilde \Tau$, in which case it is common to write $\Tau^{-1} := \widetilde \Tau$. In particular, $\Tau$ is then bijective, and the inverse also is a continuous, i.e., bounded, linear operator. Furthermore, a vector space $V$ is a Banach space, if it is complete with respect to $\snorm{\cdot}_V$. With $\Kbb \in \curbr{\R, \C}$ and $\snorm{\mu}_{TV} := \abs{\mu}(\R)$ denoting the total variation norm of $\mu \in \mathcal{M}(\Kbb, \mathcal{B}(\R))$, examples for Banach spaces are $(\mathcal{M}(\Kbb, \mathcal{B}(\R)), \snorm{\cdot}_{TV})$ and $(L^1(\R), \snorm{\cdot}_1)$. For further operator theoretical basics, we refer to \cite{axler2019measure, robinson2020introduction}.

\section{Deconvolution with a right-lateral discrete noise} \label{SomeSimpExamp}

We begin our examination with a discrete deconvolution problem, motivated by the aim to recover the d.f. $F_X$ in settings of errors in variables, in which both components $X$ and $\varepsilon$ are associated with right-lateral discrete distributions, and specifically $F_X$ distributes its mass on a monotonic set. As a particular consequence, we will eventually be able to represent the Dirac measure in terms of any right-lateral sequence. In the second part of this section, we will apply this result to a more general setting.

\subsection{A right-lateral discrete target with a monotonic support}

Throughout this paragraph, the following assumptions are supposed to hold.

\begin{assumption} \label{Ass2025081801}
The sequence $( r(\ell) )_{\ell\in \Z}$ is defined by $r(\ell) := (q\ast p(\ell, \cdot))(\ell)$, viz
\begin{align} \label{2025073101}
r(\ell) = \suml_{z = -\infty}^\infty q(\ell-z)p(\ell, z) \hspace{1cm} (\ell \in \Z),
\end{align}
for two sequences $( q(\ell) )_{\ell\in\Z}, ( p(\ell, z) )_{ (\ell, z) \in \Z^2 } \subset \C$, such that $q(\ell) = 0$, for all $\ell \in -\N$, and $p(\ell, z) = 0$, for all $(\ell, z) \in \Z \times -\N$.
\end{assumption}

Observe that $r$ is actually composed by a finite number of summands only, that is, $r(\ell) = \sum_{z=0}^\ell q(\ell-z)p(\ell, z)$, for all $\ell \in \Z$. Our first step consists in a transformation of this convolution equation. For this purpose, we define $\Lbb := \curbr{0, \hdots, L_0-1}$, in terms of
\begin{align*}
L_0 := \sup\rrb{ L \in \N_0 : p(\ell, 0) \neq 0 \mbox{ for all } 0 \leq \ell < L },
\end{align*}
assuming without loss of generality that $L_0 \geq 1$. Moreover, we introduce the sequences $( \ddot r(\ell) )_{\ell \in \Z}$ and $( \ddot p_+(\ell, z) )_{(\ell, z) \in \Z^2}$, given by
\begin{align} \label{2025073103}
\ddot r(\ell) :=&\, \frac{ r(\ell) }{ p(\ell, 0) } \ONE_\Lbb(\ell), \\ \label{2025073102}
\ddot p_+(\ell, z) :=&\, \rb{ \delta_{\curbr{0}}(\curbr{z}) - \frac{ p(\ell, z) }{ p(\ell, 0) } } \ONE_\Lbb(\ell).
\end{align}
Notice that $\ddot r(\ell) = \ddot p_+(\ell, z) = 0$, for all $(\ell, z) \in \Z \setminus \Lbb \times \Z$. In addition, $\ddot p_+(\ell, z) = 0$, even if $(\ell, z) \in \Lbb \times -\N_0$. Lastly, we define $( q|_\Lbb(\ell) )_{\ell \in \Z}$ as
\begin{align} \label{2025091101}
q|_\Lbb(\ell) := q(\ell) \ONE_\Lbb(\ell).
\end{align}
Now, with the aid of the above quantities, the equation (\ref{2025073101}) implies that
\begin{align} \label{2025072806}
q|_\Lbb(\ell) = \ddot r(\ell) + (q|_\Lbb \ast \ddot p_+(\ell, \cdot))(\ell) \hspace{1cm} (\ell \in \Z).
\end{align}
We remark that the convolution on the right hand side actually is a convolution of two discrete measures, of which the first has its atoms on $( q|_\Lbb (z) )_{z \in \Z}$, whereas the atoms of the second measure lie on $( \ddot p_+(\ell, z) )_{ z \in \Z }$, depending on $\ell \in \Z$. Accordingly, to follow the classical theory on integral equations, we identify the above as a Volterra-type equation of the second kind. Since the convolution product is the sum of the first $\ell+1$ consecutive atoms, for each $\ell \in \Lbb$, it is particularly convenient to characterize the associated solution.

\begin{lemma} \label{Lem2025072901}
Under Assumption \ref{Ass2025081801}, a function $a : \R \rightarrow \C$ satisfies $a(\ell) = q|_\Lbb(\ell)$, for all $\ell \in \Z$, if and only if
\begin{align} \label{2025072901}
a(\ell) = \ddot r(\ell) + (a \ast \ddot p_+(\ell, \cdot))(\ell) \hspace{1cm} (\ell \in \Z).
\end{align}
\end{lemma}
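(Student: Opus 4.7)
The plan is to handle the two implications of the equivalence separately. For the ``only if'' direction, the identity to be proven is exactly (\ref{2025072806}), which is alluded to in the paragraph preceding the lemma; I would make the derivation explicit by isolating the $z = 0$ summand in (\ref{2025073101}), dividing by $p(\ell, 0)$ (which is nonzero for $\ell \in \Lbb$ by the very choice of $L_0$), and multiplying through by $\ONE_\Lbb(\ell)$. Since $\ddot r$ and $\ddot p_+(\ell, \cdot)$ both vanish outside $\Lbb$, the case $\ell \notin \Lbb$ reduces to $0 = 0$, so it only remains to verify the identity for $\ell \in \Lbb$; there one uses that $\ell - z \in \Lbb$ whenever $1 \leq z \leq \ell$, so that $q|_\Lbb(\ell-z) = q(\ell-z)$ and the rearrangement goes through verbatim.

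For the converse, the structural observation is that (\ref{2025072901}) is a Volterra-type recursion on $\Z$: since $\ddot p_+(\ell, z) = 0$ whenever $z \leq 0$ or $\ell \notin \Lbb$, one has
\begin{align*}
(a \ast \ddot p_+(\ell, \cdot))(\ell) = \ONE_\Lbb(\ell) \suml_{z=1}^\ell a(\ell-z) \ddot p_+(\ell, z),
\end{align*}
so that the right-hand side of (\ref{2025072901}) depends on $a$ only through its values at integers strictly smaller than $\ell$. This triangular dependence turns the equation into an explicit recursion that pins down the restriction $a|_\Z$ uniquely from the data $\ddot r$ and $\ddot p_+$.

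Given this uniqueness, strong induction on $\ell \in \Z$ closes the argument. For $\ell < 0$ both sides of (\ref{2025072901}) vanish identically, giving $a(\ell) = 0 = q|_\Lbb(\ell)$. For $\ell \geq L_0$ the convolution again vanishes and $\ddot r(\ell) = 0$, so once more $a(\ell) = 0 = q|_\Lbb(\ell)$. For $\ell \in \Lbb$, the inductive hypothesis $a(j) = q|_\Lbb(j)$ for every $j < \ell$ allows $a$ to be replaced by $q|_\Lbb$ inside the convolution in (\ref{2025072901}), after which the ``only if'' direction already established yields $a(\ell) = q|_\Lbb(\ell)$.

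There is no substantial analytic difficulty: the statement is at heart a uniqueness theorem for a triangular linear recursion. The only points requiring genuine care are to track the role of the two indicators $\ONE_\Lbb$ appearing in the definitions of $\ddot r$ and $\ddot p_+$ uniformly across the three regimes $\ell < 0$, $\ell \in \Lbb$ and $\ell \geq L_0$, and to confirm that the shifted indices $\ell - z$ remain in $\Lbb$ when needed so that the restriction $q \mapsto q|_\Lbb$ introduces no spurious terms. Once these bookkeeping points are settled, both implications fall out in a handful of lines.
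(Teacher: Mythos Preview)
Your proposal is correct and follows essentially the same route as the paper: the ``only if'' direction is exactly the derivation of (\ref{2025072806}) that the paper invokes without spelling out, and your ``if'' direction via strong induction on $\ell$ is the same triangular recursion argument the paper compresses into the single line $a(\ell) = q(\ell) + (p(\ell,0))^{-1}\sum_{z=1}^\ell (q(\ell-z)-a(\ell-z))p(\ell,z)$ followed by ``$a(0)=q(0)$, $a(1)=q(1)$ and so on''. Your explicit three-regime split $\ell<0$, $\ell\in\Lbb$, $\ell\geq L_0$ and the observation that $\ell-z\in\Lbb$ whenever $1\leq z\leq\ell\in\Lbb$ make the bookkeeping cleaner than in the paper, but there is no substantive difference in method.
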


\begin{proof}
It is clear from (\ref{2025072806}) that $( q|_\Lbb(\ell) )_{\ell \in \Z}$ satisfies the indicated identity. For the proof of the other implication, suppose that $a : \R \rightarrow \C$ fulfills (\ref{2025072901}). Then, $a(\ell) = 0$, for all $\ell \in \Z \setminus \Lbb$. Moreover, with the aid of (\ref{2025073101}), for $\ell \in \Lbb$, we can write $a(\ell) = q(\ell) + ( p(\ell, 0) )^{-1} \sum_{z=1}^\ell (q(\ell-z) - a(\ell-z)) p(\ell, z)$. It shows that $a(0) = q(0)$, $a(1) = q(1)$ and so on. In summary, $a(\ell) = q(\ell)$, for each $\ell \in \Z$.
\end{proof}

A common technique to approximate a solution for second kind integral equations is the \textit{Picard iteration}, also known as the \textit{method of successive approximations} \cite[see, e.g.,][$\S$1.3]{tricomi1985integral}. We adopt this approach, letting $q(\ell, 0) := \ddot r(\ell)$ and
\begin{align} \label{2025072812}
q(\ell,m) := \ddot r(\ell) + (q(\cdot, m-1) \ast \ddot p_+(\ell, \cdot))(\ell) \hspace{1cm} ((\ell, m) \in \Z \times \N).
\end{align}
Observe that $q(\ell, m) = 0$, whenever $(\ell, m) \in \Z \setminus \Lbb \times \N_0$. In order to establish a non-recursive representation, for fixed $(\ell, z) \in \Z^2$, we define the (positive) convolution powers of the double sequence $( \ddot p_+(\ell, z) )_{(\ell, z) \in \Z^2}$ through $\ddot p_+^{\ast 0}(\ell, z) := \delta_{ \curbr{0} }(\curbr{z})$ and
\begin{align} \label{2025080101}
\ddot p_+^{\ast j}(\ell, z) := \suml_{z_1 = -\infty}^\infty \ddot p_+(\ell, z_1) \ddot p_+^{\ast (j-1)}(\ell-z_1, z - z_1) \hspace{1cm} (j \in \N).
\end{align}
The sum only consists of a finite number of summands, due to the assumptions on the sequence. Unlike the convolution power of a single-indexed sequence, however, the above generally does not commute. Instead, by induction, for each $(\ell, z) \in \Z^2$, one can easily show that
\begin{align} \label{2025080501}
\ddot p_+^{\ast j}(\ell, z) = \suml_{z_1 = -\infty}^\infty \ddot p_+(\ell-z_1, z-z_1) \ddot p_+^{\ast (j-1)}(\ell, z_1) \hspace{1cm} (j \in \N).
\end{align}
From (\ref{2025080101}), it is obvious that $\ddot p_+^{\ast j}(\ell, z) = 0$, for all $(j, \ell, z) \in \N_0 \times (\Z \setminus \Lbb) \times \Z$. Moreover, one inductively verifies that all terms equal zero, whose power $j$ exceeds the argument $z$, viz
\begin{align} \label{2025080102}
\ddot p_+^{\ast j}(\ell, z) = 0 \hspace{1cm} ((\ell, z) \in \Lbb \times \curbr{\hdots, j-2, j-1}).
\end{align}
It must be emphasized that the convolution powers $( p^{ \ast j}(\ell, z) )_{j \in \N_0}$ do not exhibit such a cancelling behaviour. Finally, the sum of the first $\floorfct{x}+1$ convolution powers is defined by
\begin{align} \label{2025072810}
\alpha\curbr{ \ddot p_+ }(\ell, z, x) := \suml_{j=0}^{ \lfloor x \rfloor } \ddot p_+^{\ast j}(\ell, z) \hspace{1cm} ((\ell, z, x) \in \Z^2 \times \R).
\end{align}
Clearly, $\alpha\curbr{ \ddot p_+ }(\ell, z, 0) = \delta_{ \curbr{0}}(\curbr{z})$, for all $(\ell, z) \in \Z^2$, as well as $\alpha\curbr{ \ddot p_+ }(\ell, z, x) = 0$, for $(\ell, z) \in (\Lbb \times -\N) \cup (\Z \setminus \Lbb \times \Z)$ and $x\in\R$. We are now ready to provide a definite representation for $q(\cdot, m)$.

\begin{lemma} \label{Lem20250805}
Under Assumption \ref{Ass2025081801}, for each $(\ell, m) \in \Z \times \N_0$, we have
\begin{align} \label{2025073001}
q(\ell,m) = ( \ddot r \ast \alpha\curbr{ \ddot p_+ }(\ell, \cdot, m) )(\ell).
\end{align}
\end{lemma}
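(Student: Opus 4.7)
My plan is to prove the identity by induction on $m \in \N_0$, using the recursion that defines $q(\cdot, m)$ from (\ref{2025072812}) and the defining recursion (\ref{2025080101}) for the convolution powers $\ddot p_+^{\ast j}$.

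For the base case $m = 0$, I would simply unfold $\alpha\curbr{\ddot p_+}(\ell, \cdot, 0) = \ddot p_+^{\ast 0}(\ell, \cdot) = \delta_{\curbr{0}}(\curbr{\cdot})$. Evaluating the discrete convolution with $\ddot r$ at $\ell$ then collapses to $\ddot r(\ell) = q(\ell, 0)$.

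For the inductive step, assuming the claim for $m-1$, I would start from (\ref{2025072812}) and write
\begin{align*}
q(\ell, m) = \ddot r(\ell) + \suml_{z = -\infty}^\infty q(\ell-z, m-1) \ddot p_+(\ell, z),
\end{align*}
then invoke the induction hypothesis on $q(\ell-z, m-1)$. The goal is to recognize the right-hand side as $(\ddot r \ast \alpha\curbr{\ddot p_+}(\ell, \cdot, m))(\ell)$, where, by splitting off $j = 0$,
\begin{align*}
(\ddot r \ast \alpha\curbr{\ddot p_+}(\ell, \cdot, m))(\ell) = \ddot r(\ell) + \suml_{w = -\infty}^\infty \ddot r(\ell-w) \suml_{j=1}^m \ddot p_+^{\ast j}(\ell, w).
\end{align*}
To match the two expressions, I would apply (\ref{2025080101}) to rewrite $\ddot p_+^{\ast j}(\ell, w)$ as a sum over $z$ of $\ddot p_+(\ell, z) \ddot p_+^{\ast (j-1)}(\ell-z, w-z)$, interchange the order of summation (legitimate since all sums have finitely many nonzero terms by the lateral/support conditions in Assumption \ref{Ass2025081801} and by (\ref{2025080102})), substitute $w' = w-z$, and shift the summation index from $j$ to $j' = j-1$ so that the inner sum becomes $\alpha\curbr{\ddot p_+}(\ell-z, w', m-1)$. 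What remains is precisely $\sum_z \ddot p_+(\ell, z)(\ddot r \ast \alpha\curbr{\ddot p_+}(\ell-z, \cdot, m-1))(\ell-z)$, which by the inductive hypothesis equals $\sum_z \ddot p_+(\ell, z) q(\ell-z, m-1)$.

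The only genuine obstacle is that the double-indexed kernel $\ddot p_+(\ell, z)$ is non-commutative in its two arguments, so one cannot simply absorb $\ddot p_+(\ell, z)$ into a convolution power as in the single-indexed case. The identity (\ref{2025080101}) is tailored exactly for the direction needed here (``peeling off one factor on the left''), so the argument goes through cleanly; the bookkeeping on lateral supports needed to justify Fubini-style interchange is straightforward from Assumption \ref{Ass2025081801} and the vanishing property (\ref{2025080102}).
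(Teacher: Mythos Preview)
Your proposal is correct and follows essentially the same approach as the paper: induction on $m$, invoking the recursion (\ref{2025072812}), the induction hypothesis, and the defining recursion (\ref{2025080101}) for the convolution powers, with an interchange of finite sums justified by the lateral support in Assumption \ref{Ass2025081801}. The only cosmetic difference is that the paper works forward from $q(\ell,m)$ to the closed form, whereas you unfold both sides and match them in the middle; the underlying manipulations are identical.
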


\begin{proof}
We proceed by induction and remark that all series below are indeed finite, due to our assumptions. The case $m=0$ is clear. Supposing validity for $0, \hdots, m-1$, through (\ref{2025072812}), (\ref{2025073001}) and by induction hypothesis, we receive
\begin{align*}
q(\ell,m) = \ddot r(\ell) + \suml_{j=1}^m \suml_{z=-\infty}^\infty \ddot p_+(\ell, z) (\ddot r \ast \ddot p_+^{\ast (j-1)}(\ell-z, \cdot))(\ell-z).
\end{align*}
Substitution and additional rearrangements eventually yield
\begin{align*}
q(\ell,m) = \ddot r(\ell) + \suml_{j=1}^m \suml_{z_3=-\infty}^\infty \ddot r(\ell-z_3) \suml_{z=-\infty}^\infty \ddot p_+(\ell, z) \ddot p_+^{\ast (j-1)}(\ell-z, z_3-z).
\end{align*}
Upon accounting for the definition of convolution powers as in (\ref{2025080101}), we can write as well $q(\ell,m) = \ddot r(\ell) + \sum_{j=1}^m ( \ddot r \ast \ddot p_+^{\ast j}(\ell, \cdot) )(\ell)$, which was to show.
\end{proof}

Generally, Picard's iteration merely provides an approximation for the target, whose accuracy hopefully increases with the number of iterations. In the current situation, our approximation $q(\cdot, m)$ even coincides with the target $q|_\Lbb$, for all sufficiently large $m$, thanks to the decay of the convolution powers $( \ddot p_+^{\ast j}(\ell, z) )_{j \in \N_0}$, according to (\ref{2025080102}). Indeed, abbreviating
\begin{align} \label{2025051901}
\beta\curbr{ \ddot p_+}(\ell, z) := \alpha\curbr{ \ddot p_+ }(\ell, z, z) \hspace{1cm} ((\ell, z) \in \Z^2),
\end{align}
the following statement holds, which is one of the main results of the present paragraph.

\begin{theorem}[deconvolution I] \label{Theo2025073001}
Under Assumption \ref{Ass2025081801}, it holds that
\begin{align*}
q|_\Lbb(\ell) = ( \ddot r \ast \beta\curbr{\ddot p_+}(\ell, \cdot) )(\ell) \hspace{1cm} (\ell \in \Z).
\end{align*}
In particular, $q|_\Lbb(\ell) = q(\ell)$ for all $\ell \in \Z$, whenever $q(\ell) = 0$ for $\ell \in \Z \setminus \Lbb$.
\end{theorem}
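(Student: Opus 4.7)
The plan is to derive the identity from Lemma \ref{Lem20250805} by pushing the iteration index $m$ large enough that, simultaneously, the partial sum $\alpha\curbr{\ddot p_+}$ collapses onto $\beta\curbr{\ddot p_+}$ and the Picard iterates coincide with the target $q|_\Lbb$.

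First, I would examine how $\alpha\curbr{\ddot p_+}(\ell, \cdot, m)$ reduces to $\beta\curbr{\ddot p_+}(\ell, \cdot)$ as $m$ grows. By the vanishing property (\ref{2025080102}), the summand $\ddot p_+^{\ast j}(\ell, z)$ is zero whenever $j > z$, so $\alpha\curbr{\ddot p_+}(\ell, z, m)$ is constant in $m \geq z$, and its value is precisely $\alpha\curbr{\ddot p_+}(\ell, z, z) = \beta\curbr{\ddot p_+}(\ell, z)$. Because $\ddot r$ is supported on $\Lbb$ and $\alpha\curbr{\ddot p_+}(\ell, z, \cdot)$ vanishes for $z \in -\N$, the sum $\sum_z \ddot r(\ell - z) \alpha\curbr{\ddot p_+}(\ell, z, m)$ only draws contributions from $z \in \curbr{0, \hdots, \ell}$. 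Hence, picking $m \geq L_0 - 1$ uniformly suffices to replace $\alpha$ by $\beta$ throughout the sum, and Lemma \ref{Lem20250805} turns into
\begin{equation*}
q(\ell, m) = (\ddot r \ast \beta\curbr{\ddot p_+}(\ell, \cdot))(\ell) \qquad (\ell \in \Z, \; m \geq L_0 - 1).
\end{equation*}

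The second step, which I anticipate to be the main obstacle, is to show that the left-hand side ceases to depend on $m$ and equals $q|_\Lbb(\ell)$. The decisive observation is that $\ddot p_+(\ell, 0) = 0$ by definition (\ref{2025073102}), so the convolution in the Picard recursion (\ref{2025072812}) reduces to $\sum_{z \geq 1} q(\ell - z, m-1) \ddot p_+(\ell, z)$ and only queries the previous iterate at arguments strictly below $\ell$. An induction on $\ell \in \Lbb$ (together with the trivial fact that $q(\cdot, m)$ vanishes outside $\Lbb$ for every $m$, since both $\ddot r$ and $\ddot p_+(\ell, \cdot)$ do) then yields $q(\ell, m) = q|_\Lbb(\ell)$ as soon as $m \geq \ell$. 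An equivalent route, if the recursion turns out awkward, is to note that the pointwise limit $\lim_{m \to \infty} q(\ell, m)$ exists because the iterates are eventually constant, satisfies the fixed-point equation (\ref{2025072901}), and is therefore identified with $q|_\Lbb$ through the uniqueness part of Lemma \ref{Lem2025072901}.

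Combining both halves, with $m$ chosen at least $\max(\ell, L_0 - 1)$, proves the identity on $\Lbb$. Off $\Lbb$ it is trivial, since $\alpha\curbr{\ddot p_+}(\ell, \cdot, \cdot)$ and hence $\beta\curbr{\ddot p_+}(\ell, \cdot)$ are identically zero there, while $q|_\Lbb(\ell) = 0$ holds by definition (\ref{2025091101}). The final addendum is immediate: under the extra hypothesis $q(\ell) = 0$ for $\ell \in \Z \setminus \Lbb$, definition (\ref{2025091101}) forces $q|_\Lbb \equiv q$ pointwise on all of $\Z$. The whole argument is essentially bookkeeping; the only genuinely delicate point is to set up the induction so that the two stabilisation mechanisms, for $\alpha$ and for $q(\cdot, m)$, are triggered by the same threshold in $m$.
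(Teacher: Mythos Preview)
Your approach is essentially the paper's: collapse $\alpha\curbr{\ddot p_+}(\ell,\cdot,m)$ to $\beta\curbr{\ddot p_+}(\ell,\cdot)$ on the effective summation range $\{0,\hdots,\ell\}$, and then identify the stabilised iterate with $q|_\Lbb$ through the fixed-point characterisation of Lemma~\ref{Lem2025072901}. The paper does exactly this, observing $q(\ell,m)=q(\ell,\ell)$ for $m\geq\ell$ and then checking that $\ell\mapsto q(\ell,\ell)$ satisfies (\ref{2025072901}).

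One slip to correct: your uniform threshold $m\geq L_0-1$ is not available in general, since $L_0$ may be infinite (indeed $\Lbb=\N_0$ is precisely the case treated in Corollary~\ref{Lem2025091101}). Your own analysis already gives the right pointwise threshold: the convolution sum only sees $z\in\{0,\hdots,\ell\}$, so $m\geq\ell$ suffices for each fixed $\ell$, and this single condition triggers both stabilisation mechanisms simultaneously. Drop the $L_0-1$ and the $\max$, and the argument goes through exactly as in the paper.
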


In view of the last theorem, it is reasonable to speak of $( \beta\curbr{\ddot p_+}(\ell, z) )_{(\ell, z) \in \Z^2}$ as the inverse sequence to the convolution with $(p(\ell, z))_{(\ell, z) \in \Z^2}$.

\begin{proof}[Proof of Theorem \ref{Theo2025073001}]
Due to the fact that $q(\ell, m) = 0 = q|_\Lbb(\ell)$, for all $(\ell, z) \in \Z \setminus \Lbb \times \Z$, we may confine to $\ell \in \Lbb$. For each $\ell \in \Lbb$ and $x \geq 0$, by (\ref{2025080102}) and (\ref{2025072810}), we have $\alpha\curbr{ \ddot p_+ }(\ell, z, x) = \beta\curbr{ \ddot p_+ }(\ell, z)$, whenever $0 \leq z \leq \floorfct{x}$. Therefore, $q(\ell, m) = \sum_{j=0}^\ell ( \ddot r \ast \ddot p_+^{\ast j}(\ell, \cdot) )(\ell) = q(\ell, \ell)$, if $m \geq \ell$. At the same time, (\ref{2025072812}) implies that $q(\ell,\ell) = q(\ell, m+1) = \ddot r(\ell) + \sum_{z=0}^\ell q(\ell-z, \ell-z) \ddot p_+(\ell, z)$, for $m \geq \ell$, and hence $q(\ell,\ell) = q|_\Lbb(\ell)$, according to Lemma \ref{Lem2025072901}.
\end{proof}
 
Defining $\ddot p(\ell,z) := (p(\ell, 0))^{-1} p(\ell, z) \ONE_\Lbb(\ell)$, we briefly mention in passing that $( \ddot p^{\ast j}(\ell, z) )_{j \in \N_0}$ and $( \ddot p_+^{\ast j}(\ell, z) )_{j \in \N_0}$ are binomial transforms (see Appendix \ref{AppConvIdMeas}) of each other. More precisely, by induction, one can show that $\ddot p_+^{\ast j}(\ell, z) = \sum_{k=0}^j \binom{j}{k} (-1)^k \ddot p^{\ast k}(\ell, z)$, for $(j, \ell, z) \in \N_0 \times \Z^2$. Thereof, by additional use of the well-known binomial identity (26.3.7) in \cite{olver2010nist}, we get
\begin{align} \label{2025051601X}
\alpha\curbr{ \ddot p_+ }(\ell, z, x) = \suml_{k=0}^{ \floorfct{x} } \binom{ \floorfct{x} + 1 }{k+1} (-1)^k \ddot p^{\ast k}(\ell, z) \hspace{1cm} ((\ell, z, x) \in \Z^2 \times \R).
\end{align}
Finally, a special case deserves a separate emphasis, namely when the noise sequence merely depends on a single index, i.e., $p(\ell, z) = p(z, z) =: u(z)$, for all $(\ell, z) \in \Z^2$. In this event, if $u(0) \neq 0$, Theorem \ref{Theo2025073001} facilitates the recovery of the entire sequence $( q(\ell) )_{ \ell \in \Z}$, and the resulting representation involves ordinary convolution powers. To become more precise, for the sequence $( \ddot u_+(z) )_{z \in \Z}$ that is defined by
\begin{align} \label{2025091102}
\ddot u_+(z) := \delta_{ \curbr{0} }(\curbr{z}) - \frac{ u(z) }{ u(0) } \hspace{1cm} (z \in \Z),
\end{align}
we denote the sum of the first $z+1$ convolution powers by
\begin{align} \label{2025091103}
\gamma\curbr{ \ddot u_+ }(z) := \suml_{j = 0}^z \ddot u_+^{\ast j}(z) \hspace{1cm} (z \in \Z).
\end{align}
Notice that the right hand side can be expanded via the binomial convolution theorem, Lemma \ref{LemDekmkompakt02}, to obtain a representation similar to (\ref{2025051601X}). In any case, the following result is straightforward.

\begin{corollary}[deconvolution II] \label{Lem2025091101}
If Assumption \ref{Ass2025081801} holds, with $p(\ell, z) = p(z, z) =: u(z)$, for all $(\ell, z) \in \Z^2$, and $u(0) \neq 0$, then
\begin{align*}
q(\ell) = (\ddot r \ast \gamma\curbr{ \ddot u_+ })(\ell) \hspace{1cm} (\ell \in \Z).
\end{align*}
In particular, $\ddot u_+^{\ast j}(z) = 0$, for all $(j, z) \in \N_0 \times \Z$ with $z \leq j-1$.
\end{corollary}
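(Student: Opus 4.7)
The statement follows by specializing Theorem \ref{Theo2025073001} to the $\ell$-independent noise and reducing the double-indexed convolution powers $\ddot p_+^{\ast j}(\ell, \cdot)$ to the single-indexed powers $\ddot u_+^{\ast j}$. My plan has four steps.

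\textbf{Step 1 (setup).} The hypothesis $u(0) \neq 0$ together with $p(\ell,0) = u(0)$ for every $\ell \in \N_0$ yields $L_0 = \infty$ and hence $\Lbb = \N_0$. Since $q$ is right-lateral by Assumption \ref{Ass2025081801}, we have $q|_\Lbb(\ell) = q(\ell)$ for every $\ell \in \Z$. For the same reason, $\ddot p_+(\ell,z) = \ddot u_+(z)$ whenever $\ell \in \Lbb$, and $\ddot p_+(\ell,z) = 0$ otherwise. In particular $\ddot u_+(z) = 0$ for all $z \leq 0$, since $u$ is right-lateral and $\ddot u_+(0) = 1 - u(0)/u(0) = 0$.

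\textbf{Step 2 (the ``in particular'' clause).} I prove $\ddot u_+^{\ast j}(z) = 0$ for $z \leq j-1$ by induction on $j$. The case $j = 0$ is just $\delta_{\{0\}}(\{z\}) = 0$ for $z \leq -1$. For the inductive step, write
\begin{align*}
\ddot u_+^{\ast j}(z) = \suml_{z_1 = 1}^\infty \ddot u_+(z_1) \ddot u_+^{\ast (j-1)}(z - z_1),
\end{align*}
using that $\ddot u_+(z_1) = 0$ for $z_1 \leq 0$. By the inductive hypothesis, the factor $\ddot u_+^{\ast (j-1)}(z-z_1)$ vanishes unless $z - z_1 \geq j - 1$, i.e.\ $z_1 \leq z - j + 1$. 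For $z \leq j - 1$ this forces $z_1 \leq 0$, which is excluded, and the sum is empty.

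\textbf{Step 3 (reduction of the convolution powers).} The key claim is that $\ddot p_+^{\ast j}(\ell, z) = \ddot u_+^{\ast j}(z)$ for every $j \in \N_0$ and every $(\ell, z) \in \N_0^2$ with $z \leq \ell$. This I prove by induction on $j$. The base case holds by the common definition $\delta_{\{0\}}(\{z\})$. In the inductive step, I expand the recursion (\ref{2025080101}):
\begin{align*}
\ddot p_+^{\ast j}(\ell, z) = \suml_{z_1 \in \Z} \ddot u_+(z_1) \ddot p_+^{\ast (j-1)}(\ell - z_1, z - z_1).
\end{align*}
Only $z_1 \geq 1$ contribute (since $\ddot u_+$ vanishes on $-\N_0$), and for $j \geq 2$ also only $z_1 \leq \ell$ (since otherwise $\ell - z_1 \notin \Lbb$ forces the second factor to zero); the case $j = 1$ is handled separately and directly gives $\ddot p_+^{\ast 1}(\ell, z) = \ddot u_+(z)$. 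For $z \leq \ell$ and $z_1 \in \{1,\hdots,\ell\}$, the index $z - z_1$ is at most $\ell - z_1$, so the inductive hypothesis applies and replaces the double-indexed power by $\ddot u_+^{\ast (j-1)}(z - z_1)$. Finally, I append the vanishing contributions with $z_1 > z$ (Step 2 implies $\ddot u_+^{\ast (j-1)}(z-z_1) = 0$ whenever $z_1 > z$) to recover the full sum $\ddot u_+^{\ast j}(z)$.

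\textbf{Step 4 (conclusion).} From Step 3, for every $\ell \in \N_0$ and $0 \leq z \leq \ell$ I obtain
\begin{align*}
\beta\curbr{\ddot p_+}(\ell, z) = \suml_{j=0}^{z} \ddot p_+^{\ast j}(\ell, z) = \suml_{j=0}^{z} \ddot u_+^{\ast j}(z) = \gamma\curbr{\ddot u_+}(z).
\end{align*}
Because $\ddot r$ is supported on $\N_0$, Theorem \ref{Theo2025073001} then yields
\begin{align*}
q(\ell) = q|_\Lbb(\ell) = \suml_{z=0}^{\ell} \ddot r(\ell-z) \beta\curbr{\ddot p_+}(\ell, z) = \suml_{z=0}^{\ell} \ddot r(\ell-z) \gamma\curbr{\ddot u_+}(z) = (\ddot r \ast \gamma\curbr{\ddot u_+})(\ell)
\end{align*}
for $\ell \in \N_0$, and both sides are zero for $\ell \in -\N$ by right-laterality.

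\textbf{Main obstacle.} The only delicate point is Step 3, where I must justify that extending the effective summation range from $\{1,\hdots,\ell\}$ (forced by $\Lbb = \N_0$ in the double-indexed recursion) to $\{1, 2, \hdots\}$ (as in the single-indexed convolution) introduces no extra contributions. This relies crucially on the cancellation established in Step 2, which is why I prove the ``in particular'' clause first rather than treating it as a corollary of the main identity.
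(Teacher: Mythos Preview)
Your proof is correct and follows the same strategy as the paper: identify $\Lbb=\N_0$, reduce the double-indexed convolution powers $\ddot p_+^{\ast j}(\ell,\cdot)$ to the single-indexed $\ddot u_+^{\ast j}$, and then invoke Theorem \ref{Theo2025073001}. In fact you are more precise than the paper, which asserts $\ddot p_+^{\ast j}(\ell,z)=\ddot u_+^{\ast j}(z)$ for \emph{all} $(j,z)\in\N_0\times\Z$ (this fails, e.g., for $\ell=0$, $j=2$, $z=2$), whereas you correctly restrict to $0\le z\le\ell$, which is all that the convolution $(\ddot r\ast\beta\curbr{\ddot p_+}(\ell,\cdot))(\ell)$ ever sees; one cosmetic point is that in Step~3 your inductive hypothesis is stated only for $z-z_1\ge 0$, so for $z<z_1\le\ell$ you should note explicitly that both $\ddot p_+^{\ast(j-1)}(\ell-z_1,z-z_1)$ and $\ddot u_+^{\ast(j-1)}(z-z_1)$ vanish by (\ref{2025080102}) and your Step~2, respectively.
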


\begin{proof}
The assumptions imply that $\Lbb = \N_0$ and $r = u \ast q$, as well as that $u(z) = 0$, for $z \in -\N$. Now, since the asserted identity is obvious for $\ell \in -\N$, without loss of generality, we suppose that $\ell \in \N_0$. Then, $\ddot p_+(\ell, z) = \ddot u_+(z)$, for all $z\in \Z$. We thus conclude from (\ref{2025080101}) that even $\ddot p_+^{\ast j}(\ell, z) = \ddot u_+^{\ast j}(z)$, for all $(j, z) \in \N_0 \times \Z$, and thereby $\beta\curbr{ \ddot p_+ }(\ell, z) = \gamma\curbr{ \ddot u_+ }(z)$. The corollary hence directly follows from the properties of $( \ddot p_+^{\ast j}(\ell, z) )_{j \in \N_0}$ and from Theorem \ref{Theo2025073001}.
\end{proof}

We proceed with two applications of Theorem \ref{Theo2025073001} to various settings of errors in variables. In both of them, the mass of $F_X$ is concentrated on a left-bounded monotonic set. Our examples will not only provide an exact representation for $F_X$, but even facilitate the unbiased estimation by means of an i.i.d. sample of the blurred variable $Y$.

\begin{corollary}[left-bounded monotonic $\Tbb_X$ and left-bounded countable $\Tbb_\varepsilon$] \label{SatzdiskrDekWkeitsfkt}
Suppose that $\Tbb_X \subseteq \curbr{ \xi_\ell }_{\ell \in \N_0}$, with $\xi_{\ell-1} < \xi_\ell$, for all $\ell \in \N$, and $F_X\curbr{\xi_0} > 0$. Additionally, let $\Tbb_\varepsilon \subset \R$ be countable and assume the existence of $z_0\in\Tbb_\varepsilon$ with $F_\varepsilon(z_0-) = 0$. Define the sequences $( p_X(\ell) )_{\ell \in \Z}$, $( \ddot p_{\varepsilon, +}(\ell, z) )_{ (\ell, z) \in \Z^2}$ and $(\ddot p_Y(\ell) )_{\ell \in \Z}$ by $p_X(\ell) := F_X\curbr{\xi_\ell} \ONE_{\N_0}(\ell)$,
\begin{align*}
\ddot p_{\varepsilon, +}(\ell, z) :=&\, \rb{ \delta_{\curbr{0}}(\curbr{z}) - \frac{ F_\varepsilon\curbr{z_0 + \xi_\ell-\xi_{\ell-z}} }{ F_\varepsilon\curbr{z_0} } \ONE_{\curbr{0, \hdots, \ell}}(z) } \ONE_{\N_0}(\ell), \\
\ddot p_Y(\ell) :=&\, \frac{ F_Y\curbr{z_0 + \xi_\ell} }{ F_\varepsilon\curbr{z_0} } \ONE_{\N_0}(\ell).
\end{align*}
Then,
\begin{align*}
p_X(\ell) = (\ddot p_Y \ast \beta\curbr{\ddot p_{\varepsilon, +}}(\ell, \cdot))(\ell) \hspace{1cm} (\ell \in \Z).
\end{align*}
Specifically if there exists $s> 0$ with $\xi_\ell = \xi_0 + s\ell$, for all $\ell \in \N_0$, defining $\ddot u_{\varepsilon, +}(z) := \ddot p_{\varepsilon, +}(z, z)$, it holds that
\begin{align*}
F_X(\xi) = (\Theta\curbr{\ddot p_Y} \ast \Theta\curbr{ \gamma\curbr{\ddot u_{\varepsilon, +} } })\rb{\frac{\xi-\xi_0}{s}} \hspace{1cm} (\xi \in \R).
\end{align*}
\end{corollary}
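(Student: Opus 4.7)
The plan is to reduce the corollary to two invocations of earlier results: Theorem \ref{Theo2025073001} for the first representation, and Corollary \ref{Lem2025091101} together with identity (\ref{2025020603}) for the second. The essential task is bookkeeping---identifying the data $(q, p, r)$ of Assumption \ref{Ass2025081801} and checking that its hypotheses hold.

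First, I would specialise the convolution equation (\ref{MassFaltung}) at the atoms $y = z_0 + \xi_\ell$, $\ell \in \N_0$. Because $F_\varepsilon(z_0-) = 0$ and $\Tbb_X = \curbr{\xi_\ell}_{\ell \in \N_0}$ with strictly increasing $\xi_\ell$, each admissible decomposition $z_0 + \xi_\ell = x + z$ forces $x = \xi_{\ell-k}$ and $z = z_0 + \xi_\ell - \xi_{\ell-k}$ for some $0 \leq k \leq \ell$. This yields $F_Y\curbr{z_0 + \xi_\ell} = \sum_{k=0}^\ell F_X\curbr{\xi_{\ell-k}} F_\varepsilon\curbr{z_0 + \xi_\ell - \xi_{\ell-k}}$, matching (\ref{2025073101}) with $q(\ell) := p_X(\ell)$, $r(\ell) := F_Y\curbr{z_0 + \xi_\ell}\ONE_{\N_0}(\ell)$, and $p(\ell, z) := F_\varepsilon\curbr{z_0 + \xi_\ell - \xi_{\ell-z}}\ONE_{\curbr{0,\hdots,\ell}}(z)\ONE_{\N_0}(\ell)$. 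Since $p(\ell, 0) = F_\varepsilon\curbr{z_0} > 0$ for every $\ell \in \N_0$, one has $\Lbb = \N_0$, and the normalisations (\ref{2025073103}) and (\ref{2025073102}) reproduce exactly $\ddot p_Y$ and $\ddot p_{\varepsilon, +}$. Because $p_X$ vanishes on $-\N$, the restriction $q|_\Lbb$ equals $q = p_X$, and Theorem \ref{Theo2025073001} yields the first claimed formula.

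For the equally spaced case, I would use that $\xi_\ell - \xi_{\ell-z} = sz$ depends only on $z$. Setting $u(z) := F_\varepsilon\curbr{z_0 + sz}\ONE_{\N_0}(z)$, the convolution sum (\ref{2025073101}) is unchanged under the substitution $p(\ell, z) \mapsto u(z)$, because the right-laterality of $p_X$ restricts the summation to $0 \leq z \leq \ell$ where both versions coincide. This places us in the setup of Corollary \ref{Lem2025091101} with $u(0) = F_\varepsilon\curbr{z_0} > 0$ and $\ddot u_+ = \ddot u_{\varepsilon, +}$, producing the sequence-level identity $p_X = \ddot p_Y \ast \gamma\curbr{\ddot u_{\varepsilon, +}}$. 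Summing over $\ell$ with $\xi_0 + s\ell \leq \xi$ then gives $F_X(\xi) = \Theta\curbr{p_X}\rb{(\xi - \xi_0)/s}$, and an application of identity (\ref{2025020603}) converts the discrete convolution into the Stieltjes convolution of the $\Theta$-d.fs., delivering the claimed formula. The most delicate point is the replacement $p(\ell, z) \mapsto u(z)$, since the non-commutative convolution powers (\ref{2025080101}) can in principle depend on values of $p$ outside the window $0 \leq z \leq \ell$; however, only $r$ enters the hypothesis of Corollary \ref{Lem2025091101}, so it suffices that $r$ is preserved, which is immediate.
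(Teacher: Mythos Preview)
Your proposal is correct and follows essentially the same route as the paper: specialise (\ref{MassFaltung}) at the points $z_0+\xi_\ell$, identify the resulting sum with Assumption \ref{Ass2025081801} (so that $\Lbb=\N_0$), invoke Theorem \ref{Theo2025073001} for the first formula, and in the equidistant case apply Corollary \ref{Lem2025091101} together with $F_X(\xi)=\Theta\{p_X\}((\xi-\xi_0)/s)$. Your explicit justification of the replacement $p(\ell,z)\mapsto u(z)$ and the citation of (\ref{2025020603}) are details the paper leaves implicit, but the argument is the same.
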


Notice that the span $s > 0$ in the equidistant case of Corollary \ref{SatzdiskrDekWkeitsfkt} need not be unique. Moreover, monotonicity is only required for the support of the target variable, whereas the support of the errors may even be dense. Figure \ref{ExPlotsDecUnilatBothDisc} displays a few examples, where an unbiased estimator for $F_X$ was constructed from this result. Besides errors with a Poisson distribution, we also included discretely uniformly distributed errors. The sharp contrast between these two distributions becomes obvious in the Fourier domain. While the c.f. of any Poisson distribution is never zero, the c.f. of any discrete uniform distribution has infinitely many zeros.

\begin{proof}[Proof of Corollary \ref{SatzdiskrDekWkeitsfkt}]
In the described situation, $\Tbb_Y = \curbr{x+z : x \in \Tbb_X, z \in \Tbb_\varepsilon}$ and $z_0+\xi_0$ is the left extremity of $\Tbb_Y$. Moreover, the convolution equation (\ref{MassFaltung}) implies that $F_Y\curbr{z_0+\xi_\ell} = \sum_{i=0}^\ell F_X\curbr{\xi_i}F_\varepsilon\curbr{z_0+\xi_\ell-\xi_i}$, for $\ell \in \N_0$. Accordingly, $p_Y(\ell) = (p_X \ast p_\varepsilon)(\ell)$, for $\ell \in \Z$, in terms of $p_Y(\ell) := F_Y\curbr{z_0+\xi_\ell}\ONE_{ \N_0 }(\ell)$, $p_\varepsilon(\ell, z) := F_\varepsilon\curbr{z_0+\xi_\ell-\xi_{\ell-z}}\ONE_{ \curbr{0, \hdots, \ell} }(z) \ONE_{ \N_0 }(\ell)$ and with $p_X$ as in the theorem. Since these sequences satisfy Assumption \ref{Ass2025081801}, with $\Lbb= \N_0$, the first part directly follows from Theorem \ref{Theo2025073001}. Lastly, if $\xi_\ell = \xi_0 + s\ell$, it is obvious that $F_Y\curbr{z_0+\xi_0+s\ell} = \sum_{i=0}^\ell F_X\curbr{\xi_0+si}F_\varepsilon\curbr{z_0+s(\ell-i)}$ and hence Corollary \ref{Lem2025091101} applies. Also, then $F_X(\xi) = \sum_{ \ell = 0 }^\infty \ONE_{ \curbr{ \xi_0+s\ell \leq \xi } } F_X\curbr{\xi_0+s\ell} = \Theta\curbr{p_X}(s^{-1}(\xi-\xi_0))$, for all $\xi \in \R$.
\end{proof}

\begin{figure}[h]
\centering
\resizebox*{\textwidth}{!}{\includegraphics{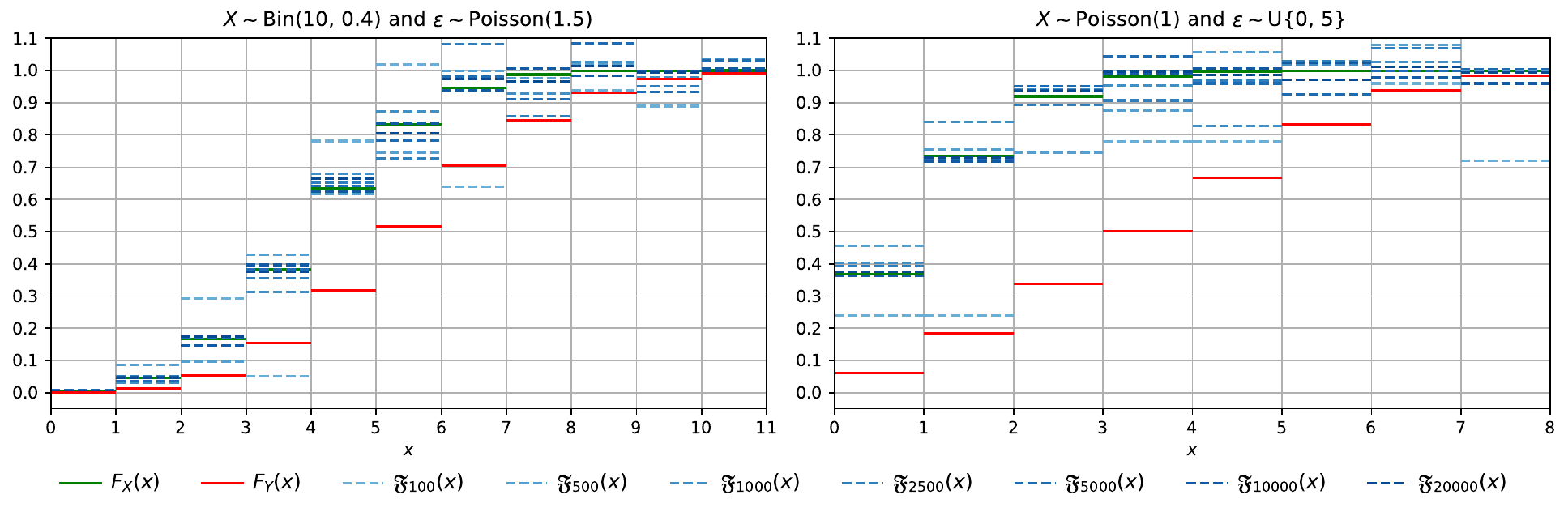}}
\caption{Plots for target d.f. $F_X$, blurred d.f. $F_Y$ and plug-in estimator of inverse, in various purely discrete setups that match Corollary \ref{SatzdiskrDekWkeitsfkt}. In all these cases, $s=1$, $\xi_0=z_0=0$ and $\Theta\curbr{\ddot p_Y} = (F_\varepsilon\curbr{0})^{-1} F_Y$, so that the estimator is given by $\mathfrak{F}_n := (n F_\varepsilon\curbr{0})^{-1} \sum_{i=1}^n \Theta\curbr{ \gamma\curbr{\ddot u_{\varepsilon, +} } }(\cdot - Y_i)$, for an i.i.d. sample $Y_1, \hdots, Y_n$ of size $n \in \N$.}
\label{ExPlotsDecUnilatBothDisc}
\end{figure}

The situation from the last corollary is also included in the next, thereby showing that the inverse need not be unique. Actually, in the next case, there exist infinitely many inverse sequences. It is dedicated to discrete $F_X$ and arbitrary $F_\varepsilon$, both right-lateral.

\begin{corollary}[left-bounded monotonic $\Tbb_X$ and arbitrary left-bounded $\Tbb_\varepsilon$] \label{BspXdiskrZstet}
Suppose that $\Tbb_X \subseteq \curbr{ \xi_\ell }_{\ell \in \N_0}$, with $\xi_{\ell-1} < \xi_\ell$, for all $\ell \in \N$, and $F_X\curbr{\xi_0} > 0$, as well as the existence of $z_0 \in \R$, with $F_\varepsilon(z_0) = 0$. Choose $\curbr{ \zeta_\ell }_{\ell \in \N_0} \subset \R$, such that $\xi_\ell < \zeta_\ell \leq \xi_{\ell+1}$ and $F_\varepsilon(z_0 + \zeta_\ell-\xi_\ell) > 0$, for each $\ell \in \N_0$. Moreover, define the sequences $( p_X(\ell) )_{\ell \in \Z}$, $( \ddot P_{\varepsilon, +}(\ell, z) )_{ (\ell, z) \in \Z^2}$ and $( \ddot P_Y(\ell) )_{\ell \in \Z}$ by $p_X(\ell) := F_X\curbr{\xi_\ell} \ONE_{ \N_0 }(\ell)$,
\begin{align*}
\ddot P_{\varepsilon, +}(\ell, z) :=&\, \rb{ \delta_{\curbr{0}}(\curbr{z}) - \frac{ F_\varepsilon(z_0 + \zeta_\ell-\xi_{\ell-z}) }{ F_\varepsilon(z_0 + \zeta_\ell-\xi_\ell) } \ONE_{\curbr{0, \hdots, \ell}}(z) } \ONE_{\N_0}(\ell), \\
\ddot P_Y(\ell) :=&\, \frac{ F_Y(z_0 + \zeta_\ell) }{ F_\varepsilon(z_0 + \zeta_\ell-\xi_\ell) } \ONE_{ \N_0 }(\ell).
\end{align*}
Then,
\begin{align*}
p_X(\ell) = (\ddot P_Y \ast \beta\curbr{\ddot P_{\varepsilon, +}}(\ell, \cdot))(\ell) \hspace{1cm} (\ell \in \Z).
\end{align*}
Particularly if there exist $s > 0$ and $0 < \sigma \leq s$, such that $\xi_\ell = \xi_0 + s\ell$, for all $\ell \in \N_0$, and $F_\varepsilon(z_0+\sigma) > 0$, defining $\zeta_\ell := \xi_0 + \sigma + s\ell$ and $\ddot U_{\varepsilon, +}(z) := \ddot P_{\varepsilon, +}(z, z)$, we have
\begin{align*}
F_X(\xi) = (\Theta\curbr{\ddot P_Y} \ast \Theta\curbr{\gamma\curbr{\ddot U_{\varepsilon, +}}})\rb{\frac{\xi-\xi_0}{s}} \hspace{1cm} (\xi \in \R).
\end{align*}
\end{corollary}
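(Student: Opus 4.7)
The approach parallels the proof of Corollary \ref{SatzdiskrDekWkeitsfkt}. The discreteness of $X$ reduces the convolution equation (\ref{Verteilungsfaltung}) to a countable sum; evaluating this sum at a suitable lattice of points will yield a discrete convolution identity on $\Z$ to which Theorem \ref{Theo2025073001} applies. The only essential departure from the discrete-noise setting is that $F_\varepsilon$ need no longer be atomic, so the evaluation points for $F_Y$ and $F_\varepsilon$ must be chosen with care.

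Concretely, for $\ell \in \N_0$, I would evaluate (\ref{Verteilungsfaltung}) at $\xi = z_0 + \zeta_\ell$, obtaining
\begin{align*}
F_Y(z_0 + \zeta_\ell) = \suml_{i=0}^\infty F_X\curbr{\xi_i} F_\varepsilon(z_0 + \zeta_\ell - \xi_i).
\end{align*}
The decisive step is the truncation of this series. From the bound $\zeta_\ell \leq \xi_{\ell+1}$, it follows that $z_0 + \zeta_\ell - \xi_i \leq z_0$ for all $i \geq \ell+1$, and $F_\varepsilon(z_0) = 0$ together with monotonicity of $F_\varepsilon$ forces the corresponding summands to vanish. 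After re-indexing via $z := \ell-i$ and introducing the auxiliary sequences $p_Y(\ell) := F_Y(z_0 + \zeta_\ell) \ONE_{\N_0}(\ell)$ and $P_\varepsilon(\ell, z) := F_\varepsilon(z_0 + \zeta_\ell - \xi_{\ell-z}) \ONE_{\curbr{0, \hdots, \ell}}(z) \ONE_{\N_0}(\ell)$, the identity reads $p_Y(\ell) = (p_X \ast P_\varepsilon(\ell, \cdot))(\ell)$ for every $\ell \in \Z$ (both sides vanish for $\ell \in -\N$). The triple $(p_X, P_\varepsilon, p_Y)$ manifestly obeys Assumption \ref{Ass2025081801}, and the hypothesis $F_\varepsilon(z_0 + \zeta_\ell - \xi_\ell) > 0$ entails $P_\varepsilon(\ell, 0) > 0$ for every $\ell \in \N_0$, so that $\Lbb = \N_0$. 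Substitution into (\ref{2025073103})--(\ref{2025073102}) yields $\ddot r = \ddot P_Y$ and $\ddot p_+ = \ddot P_{\varepsilon, +}$, so that the first asserted identity follows from the second clause of Theorem \ref{Theo2025073001}, since $p_X$ itself vanishes on $\Z \setminus \Lbb$.

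For the equidistant variant, I would observe that $\zeta_\ell - \xi_{\ell-z} = \sigma + sz$ is then independent of $\ell$. Replacing the $\ell$-dependent $P_\varepsilon(\ell, \cdot)$ by the single-indexed sequence $u(z) := F_\varepsilon(z_0 + \sigma + sz) \ONE_{\N_0}(z)$ preserves the convolution value at $\ell$, because for $\ell \geq 0$ the excess support of $u$ on $\curbr{\ell+1, \ell+2, \hdots}$ is annihilated by the vanishing of $p_X$ there, while for $\ell \in -\N$ both sides are zero. Since $u(0) = F_\varepsilon(z_0 + \sigma) > 0$ by hypothesis, Corollary \ref{Lem2025091101} applies to $(p_X, u, p_Y)$ and delivers $p_X = \ddot P_Y \ast \gamma\curbr{\ddot u_+}$ on $\Z$, while a short verification case-by-case in $z$ shows $\ddot u_+ = \ddot U_{\varepsilon, +}$ throughout $\Z$. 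The transition to distribution functions via (\ref{2025020603}), together with the elementary identity $F_X(\xi) = \Theta\curbr{p_X}((\xi-\xi_0)/s)$ valid for every $\xi \in \R$ (since $F_X$ concentrates on $\curbr{\xi_0 + sk : k \in \N_0}$), then yields the second asserted representation.

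The principal obstacle is confined to the opening truncation argument: it is precisely the interlacing constraint $\xi_\ell < \zeta_\ell \leq \xi_{\ell+1}$ together with the vanishing of $F_\varepsilon$ at $z_0$ that allows a possibly continuous error distribution to be encoded by a discrete convolution equation. Everything thereafter is a matter of translating notation and invoking Theorem \ref{Theo2025073001} and Corollary \ref{Lem2025091101}.
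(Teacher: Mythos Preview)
Your proposal is correct and follows essentially the same route as the paper: evaluate the convolution identity at $z_0+\zeta_\ell$, truncate the sum to $i\in\{0,\hdots,\ell\}$, recast it as a discrete convolution $P_Y(\ell)=(p_X\ast P_\varepsilon(\ell,\cdot))(\ell)$ satisfying Assumption \ref{Ass2025081801} with $\Lbb=\N_0$, and then invoke Theorem \ref{Theo2025073001} (respectively Corollary \ref{Lem2025091101} in the equidistant case). Your explicit justification of the truncation via $\zeta_\ell\leq\xi_{\ell+1}$ and $F_\varepsilon(z_0)=0$ is a detail the paper leaves to the reader, but otherwise the arguments coincide.
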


\begin{figure}[h]
\centering
\resizebox*{\textwidth}{!}{\includegraphics{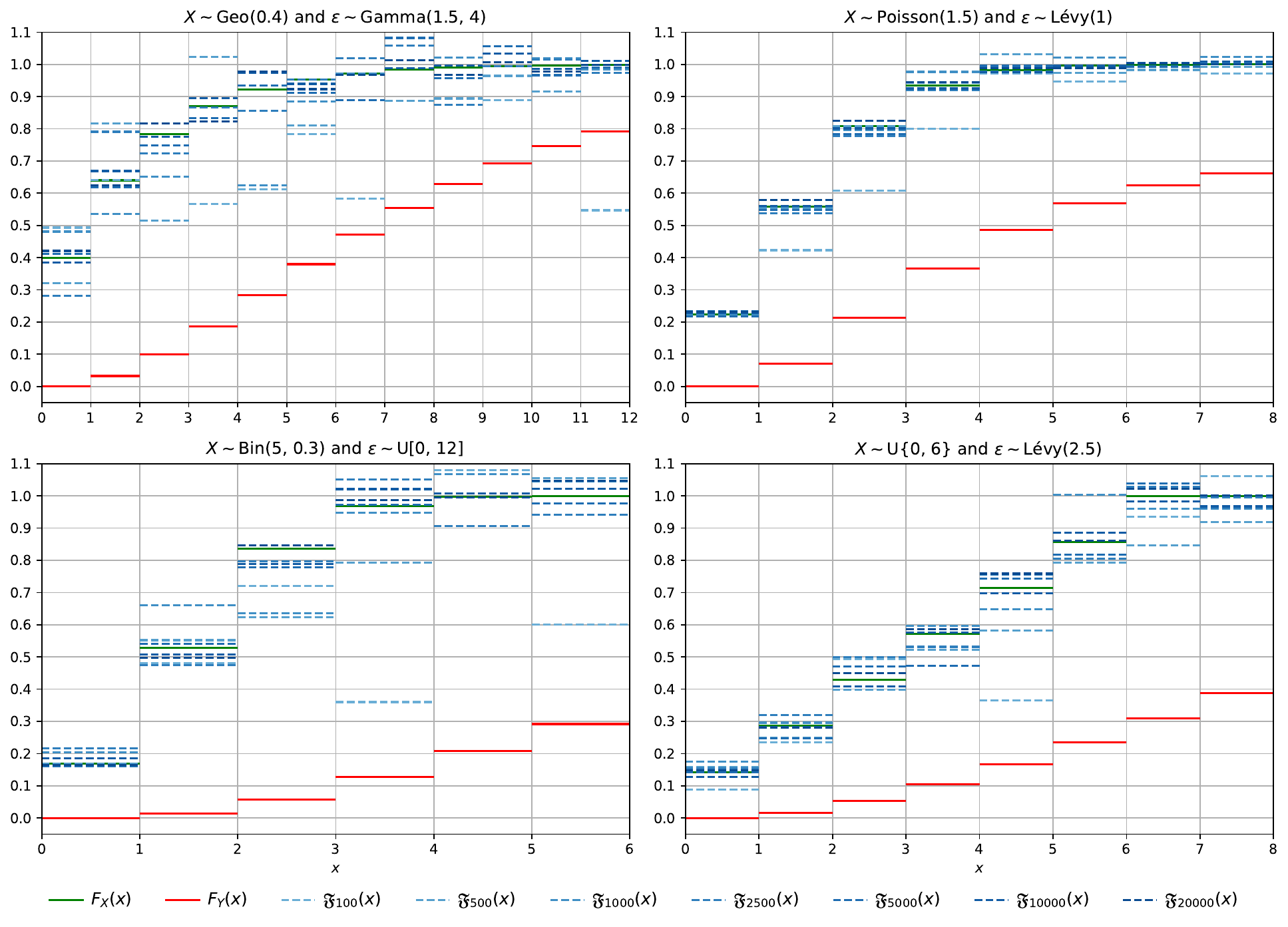}}
\caption{The above plots consider situations that match Corollary \ref{BspXdiskrZstet}, for distributions with $s=1$, $\xi_0 = z_0 = 0$ and $\zeta_\ell := \xi_{\ell+1}$. Then, defining $\ddot P_{Y, n}(\ell) := ( F_\varepsilon(1) )^{-1} F_Y(1+\ell, n)$, an unbiased deconvolution estimator for $F_X$ is given by $\mathfrak{F}_n := \Theta\curbr{\ddot P_{Y, n}} \ast \Theta\curbr{\gamma\curbr{\ddot U_{\varepsilon, +}}}$. To decrease computational effort, it is helpful to observe that $\mathfrak{F}_n(\xi) = (nF_\varepsilon(1))^{-1} \sum_{i=1}^n \sum_{j=0}^{ \floorfct{\xi} } \Theta\curbr{ \gamma\curbr{\ddot U_{\varepsilon, +}} }(1+j-Y_i)$, for $\xi \in \R$ and an i.i.d. sample $Y_1, \hdots, Y_n$.}
\label{ExPlotsDecUnilatContDisc}
\end{figure}

In the situation of Corollary \ref{BspXdiskrZstet}, $F_Y$ need not have atoms. Therefore, starting point for the proof will be the convolution equation for d.fs., rather than for probability functions. A few selected examples for applications to the estimation of $F_X$ are drawn in Figure \ref{ExPlotsDecUnilatContDisc}.

\begin{proof}[Proof of Corollary \ref{BspXdiskrZstet}]
By assumption, $F_Y(z_0+\zeta_\ell) = \sum_{i=0}^\ell F_X\curbr{\xi_i} F_\varepsilon(z_0+\zeta_\ell-\xi_i)$, for each $\ell \in \N_0$. Hence, defining $P_Y(\ell) := F_Y(z_0+\zeta_\ell) \ONE_{\N_0}(\ell)$, $P_\varepsilon(\ell, z) := F_\varepsilon(z_0+\zeta_\ell-\xi_{\ell-z}) \ONE_{ \curbr{0, \hdots, \ell} }(z) \ONE_{ \N_0 }(\ell)$ and with $p_X$ as in the theorem, the last equation implies that $P_Y(\ell) = (p_X \ast P_\varepsilon(\ell, \cdot))(\ell)$, for all $\ell \in \Z$. It shows that Assumption \ref{Ass2025081801} holds, again with $\Lbb = \N_0$, and thus Theorem \ref{Theo2025073001} applies, of which the first part is an immediate consequence. The second part eventually follows from Corollary \ref{Lem2025091101}, since then $F_Y(z_0+\xi_0+\sigma+s\ell) = \sum_{i=0}^\ell F_X\curbr{\xi_0+si} F_\varepsilon(z_0+\sigma+s(\ell-i))$.
\end{proof}

Another special case, that may almost be overlooked, owing to its simplicity, is that of a degenerate target sequence. Indeed, Corollary \ref{Lem2025091101} directly provides a representation for the identity of discrete convolution, in terms of any right-lateral sequence.

\begin{lemma} \label{LemRepIndicator}
For any sequence $( u(z) )_{z \in \Z} \subset \C$, such that $u(z) = 0$, for $z \in -\N$, and $u(0) \neq 0$, with $( \ddot u_+(z) )_{z \in \Z}$ as in (\ref{2025091102}), we have
\begin{align*}
\delta_{ \curbr{0} }(\curbr{\ell}) = ( u(0) )^{-1} ( u \ast \gamma\curbr{\ddot u_+} )(\ell) \hspace{1cm} (\ell \in \Z).
\end{align*}
\end{lemma}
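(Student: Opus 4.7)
The plan is to recognize this lemma as a direct application of Corollary \ref{Lem2025091101} to the degenerate target sequence $q(\ell) := \delta_{\curbr{0}}(\curbr{\ell})$. First I verify that Assumption \ref{Ass2025081801} is met for this choice of $q$ together with the noise $p(\ell, z) := u(z)$: by hypothesis $u(z) = 0$ for $z \in -\N$, and $q$ trivially vanishes on $-\N$, so the right-lateral requirement holds. Since $p(\ell, 0) = u(0) \neq 0$ for every $\ell$, we also have $\Lbb = \N_0$, and the hypothesis $p(\ell, z) = p(z, z) = u(z)$ of the corollary is built into our choice.

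Next I compute the convolution $r(\ell) = (u \ast q)(\ell)$. Because $q$ is the Kronecker delta at $0$, the sum in the discrete convolution collapses to the single term at $z = 0$, yielding $r(\ell) = u(\ell)$ for every $\ell \in \Z$. Consequently, using that $u(\ell)$ vanishes on $-\N$ so the indicator $\ONE_{\N_0}$ is redundant, the normalized sequence from (\ref{2025073103}) reads $\ddot r(\ell) = u(\ell)/u(0)$ for all $\ell \in \Z$.

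Now I apply Corollary \ref{Lem2025091101}, which delivers
\begin{align*}
\delta_{\curbr{0}}(\curbr{\ell}) = q(\ell) = (\ddot r \ast \gamma\curbr{\ddot u_+})(\ell) = \frac{1}{u(0)} (u \ast \gamma\curbr{\ddot u_+})(\ell) \hspace{1cm} (\ell \in \Z),
\end{align*}
where linearity of discrete convolution in the first argument has been used to pull out the scalar factor $1/u(0)$. This is exactly the asserted identity.

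There is essentially no obstacle: the content of the lemma is the specialization of the previously established deconvolution formula to the simplest possible target, and the only thing to check is that the required hypotheses reduce cleanly. The verification of right-laterality of $q$ and the collapse of $u \ast \delta_{\curbr{0}}$ to $u$ are immediate, so the whole argument is a brief unpacking of definitions followed by invocation of the corollary.
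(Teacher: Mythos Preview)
Your proof is correct and follows essentially the same approach as the paper: both set $q = \delta_{\curbr{0}}$, observe that $r = q \ast u = u$ so that $\ddot r(\ell) = (u(0))^{-1} u(\ell)$, verify Assumption \ref{Ass2025081801} with $p(\ell, z) = u(z)$, and then invoke Corollary \ref{Lem2025091101}. Your version is simply more explicit in spelling out the verification of hypotheses.
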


\begin{proof}
Consider the identity $u = \delta_{\curbr{0}}\ast u$. Then, Assumption \ref{Ass2025081801} holds, with $p(\ell, z) = u(z)$, for all $(\ell, z) \in \Z^2$, so that we may immediately apply Corollary \ref{Lem2025091101}. The claimed identity is now obvious from the fact that $\ddot r(\ell)= (u(0))^{-1}u(\ell)$, for all $\ell \in \Z$.
\end{proof}

In some cases, the sum representation for the inverse sequence $( \gamma\curbr{\ddot u_+}(z))_{z \in \Z}$ may simplify. We conclude this paragraph with a few noteworthy examples.

\begin{example}\label{Ex20251016}
In all our subsequent examples, we apply Lemma \ref{LemDiscConvPwrs} to the measure with atoms at $(\ddot u_+(z))_{z \in \Z}$. We also exploit the fact that the number of compositions of $\ell \in \N$ into $j \in \N$ parts equals $\binom{\ell-1}{j-1}$ \cite[see][Example 1.6]{Flajolet_Sedgewick_2009}.
\begin{enumerate}
\item Bernoulli sequence: If $u(z) \in \C \setminus \curbr{0}$, for $z \in \curbr{0, 1}$, and $u(z) = 0$ else, we readily infer that $\ddot u_+^{\ast j}(z) = (u(0))^{-j} ( -u(1) )^j \delta_{ \curbr{j} }(\curbr{z})$, for each $j \in \N_0$, which in turn implies that
\begin{align} \label{2025101401}
\gamma\curbr{ \ddot u_+ }(z) = \rrb{ - \frac{u(1)}{u(0)} }^z \ONE_{\N_0}(z).
\end{align}
\item Geometric sequence: For fixed $u \in \C \setminus \curbr{0}$, let $u(z) :=  u(1-u)^z \ONE_{\N_0}(z)$. Then, $\ddot u_+^{\ast j}(z) = (-1)^j \binom{z-1}{j-1} (1-u)^z \ONE_\N(z) \ONE_{ \curbr{1, \hdots, z} }(j)$, for $j \in \N$. From the binomial theorem, we therefore deduce that
\begin{align} \label{2025101602}
\gamma\curbr{ \ddot u_+ }(z) = \delta_{ \curbr{0} }(\curbr{z}) - (1-u)\delta_{ \curbr{1} }(\curbr{z}).
\end{align}
\item Poisson sequence: For fixed $\lambda > 0$, define $u(z) := e^{-\lambda} (z!)^{-1} \lambda^z \ONE_{\N_0}(z)$. Notice that $\sum_{k=0}^j \binom{j}{k} (-1)^k k^z = 0$, for all $j \in \N$ and $z \in \curbr{0, \hdots, j-1}$. With this, for $j \in \N$, one can show that
\begin{align*}
\ddot u_+^{\ast j}(z) = \frac{\lambda^z}{z!} \ONE_\N(z) \ONE_{ \curbr{1, \hdots, z} }(j) \suml_{k=1}^j \binom{j}{k} (-1)^k k^z,
\end{align*}
from which it follows that
\begin{align} \label{2025102501}
\gamma\curbr{ \ddot u_+ }(z) = \frac{ (-\lambda)^z }{ z! } \ONE_{\N_0}(z).
\end{align}
\item Uniform sequence: For fixed $K \in \N$ and $u \in \C \setminus \curbr{0}$, define $u(z) := u \ONE_{ \curbr{0, \hdots, K} }(z)$. Then, $\ddot u_+^{\ast j}(z) = (-1)^j \binom{z-1}{j-1} \ONE_{ \curbr{j, j+1, \hdots, jK } }(z)$, for $j \in \N$. Thus,
\begin{align*}
\gamma\curbr{ \ddot u_+ }(z) &= \delta_{ \curbr{0} }(\curbr{z}) + \ONE_{ \N }(z) \suml_{ j = \lceil \frac{z}{K} \rceil }^z \binom{z-1}{j-1} (-1)^j.
\end{align*}
For $1 \leq z \leq K$, the sum simplifies through the binomial theorem. For $z \geq K+1$, by application of Pascal's rule, it cancels to a single addend. Altogether, we find that
\begin{align} \label{2025101701}
\gamma\curbr{ \ddot u_+ }(z) = \delta_{ \curbr{0} }(\curbr{z}) - \delta_{ \curbr{1} }(\curbr{z}) + (-1)^{ \lceil \frac{z}{K} \rceil } \binom{ z - 2 }{ \lceil \frac{z}{K} \rceil - 2 } \ONE_{ \curbr{K+1, K+2, \hdots } }(z).
\end{align}
For $K=1$, the result matches (\ref{2025101401}), and the sequence $\gamma\curbr{\ddot u_+}(z)$ does not converge, as $z \rightarrow \infty$, but it remains bounded. A complete asymptotic discussion in the case $K\geq 2$ requires use of Stirling's approximation. We briefly verify divergence in a simpler way. For $N \in \N$, by elementary manipulations, it is easy to see that
\begin{align*}
\binom{KN-2}{N-2} = (N-1) \prodl_{j=2}^{N-1} \rb{\frac{KN}{j}-1},
\end{align*}
which obviously grows to infinity, as $N \rightarrow \infty$. Therefore, also $|\gamma\curbr{ \ddot u_+ }(KN)| \rightarrow \infty$, as $N \rightarrow \infty$.
\end{enumerate}
\end{example}

Suppose for a moment that all sequences of Example \ref{Ex20251016} are associated with probability distributions, i.e., $( u(z) )_{z \in \Z} \subset [0, 1]$ with $\sum_{z\in \Z} u(z) = 1$. Then, in case one, the measure with mass at $( \gamma\curbr{\ddot u_+}(z) )_{z \in \Z}$ is of finite total variation on $\R$, if and only if $u(0) > \frac{1}{2}$. In the second and third case, the measure is even always of finite total variation on $\R$. In particular, the associated Fourier-Stieltjes transforms exist in both cases, and one can easily show that they coincide with the reciprocal c.f. of the original sequence $(u(z))_{z \in \Z}$, up to the constant factor $u(0)$. Lastly, in the fourth case, the measure induced by the inverse sequence is of infinite total variation on $\R$. At the same time, it is well-known that the reciprocal c.f. of any non-degenerate discrete uniform distribution is unbounded. We conclude that an inverse operator may exist in the domain of d.fs., although this is not indicated in the Fourier domain.

\subsection{A noise with an equidistant support}

The actually interesting point about the degenerate case is that it ultimately gives rise to a deconvolution theorem that is not restricted to sequences. First of all, from Lemma \ref{LemRepIndicator}, for any $Q \in L^\infty(\R)$ and $T \in \N_0$, we infer that
\begin{align} \label{2025100801}
Q(\xi) = (u(0))^{-1} \suml_{z_2 = 0}^T Q(\xi-z_2) \suml_{z=0}^{z_2} u(z_2-z) \gamma\curbr{ \ddot u_+ }(z) \hspace{1cm} (\xi \in \R).
\end{align}
Generally, since the behaviour of $\gamma\curbr{ \ddot u_+ }(z)$, as $z \rightarrow \infty$, is non-trivial, the behaviour of the sequence of double sums, as $T \rightarrow \infty$, is more or less arbitrary. In fact, Example \ref{Ex20251016} suggests that the divergence of this sequence is not unusual. Accordingly, the asymptotic behaviour of the above double sum substantially depends on the function $Q$. Upon interchanging the summation order, we arrive at
\begin{align} \label{2025100802}
Q(\xi) = (u(0))^{-1} \suml_{z = 0}^T \gamma\curbr{ \ddot u_+ }(z) \suml_{z_3=0}^{T-z} Q(\xi-z-z_3) u(z_3) \hspace{1cm} (\xi \in \R).
\end{align}
The obtained representation immediately facilitates a characterization of convergence, of which the following statement is a direct consequence.

\begin{theorem}[deconvolution III] \label{TheoDecII}
For an arbitrary $Q \in L^\infty(\R)$ and a sequence $( u(z) )_{z \in \Z} \subset \C$, with $u(z) = 0$, for all $z \in -\N$, and $u(0) \neq 0$, denote $R := Q \ast \Theta\curbr{u}$ and let $( \ddot u_+(z) )_{z \in \Z}$ be defined as in (\ref{2025091102}). Then,
\begin{align*}
Q(\xi) = (u(0))^{-1} (R \ast \Theta\curbr{ \gamma\curbr{\ddot u_+} })(\xi) \hspace{1cm} (\xi \in \R),
\end{align*}
whenever one of the following conditions is fulfilled:
\begin{enumerate}
\item It exists $\xi_0 \in \R$ with $Q(\xi) = 0$ for all $\xi < \xi_0$.
\item $Q$ is non-decreasing on $\R$, $( u(z) )_{z \in \Z} \in \ell^1(\Z)$ and $( \gamma\curbr{ \ddot u_+ }(z) Q(\xi-z) )_{z \in \Z} \in \ell^1(\Z)$, for each $\xi \in \R$.
\end{enumerate}
\end{theorem}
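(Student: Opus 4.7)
My strategy is to pass to the limit $T \to \infty$ in the identity (\ref{2025100802}), rewritten as
\begin{align*}
u(0) Q(\xi) = \suml_{z = 0}^T \gamma\curbr{\ddot u_+}(z) \suml_{z_3 = 0}^{T-z} u(z_3) Q(\xi - z - z_3),
\end{align*}
and to identify the double sum on the right as the partial sum of the unordered series
\begin{align*}
S(\xi) := \suml_{z \geq 0} \suml_{z_3 \geq 0} \gamma\curbr{\ddot u_+}(z) u(z_3) Q(\xi - z - z_3).
\end{align*}
Once absolute convergence of $S(\xi)$ is secured, one iterated reading gives $S(\xi) = \sum_{z \geq 0} \gamma\curbr{\ddot u_+}(z) R(\xi-z) = (R \ast \Theta\curbr{\gamma\curbr{\ddot u_+}})(\xi)$, and, via the substitution $y = z + z_3$ together with Lemma \ref{LemRepIndicator}, the other iterated reading yields
\begin{align*}
S(\xi) = \suml_{y = 0}^\infty Q(\xi - y)(\gamma\curbr{\ddot u_+} \ast u)(y) = u(0) Q(\xi).
\end{align*}
Equating both readings of $S(\xi)$ then proves the claim. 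Under condition 1, $u$ and $Q$ are both right-lateral, so $u(z_3) Q(\xi - z - z_3)$ vanishes whenever $z_3 < 0$ or $z + z_3 > \xi - \xi_0$; in that case $S(\xi)$ reduces to a finite sum, Fubini is trivial, and the truncation with any $T \geq \xi - \xi_0$ already gives the identity.

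The real work lies in condition 2, where I would exploit the monotonicity of $Q$ to dominate the shifted values by the boundary value $Q(\xi - z)$. Since $Q \in L^\infty(\R)$ is non-decreasing, $Q(-\infty)$ is finite and $Q(\xi - z - z_3) \in [Q(-\infty), Q(\xi - z)]$ for all $z_3 \geq 0$, whence
\begin{align*}
|Q(\xi - z - z_3)| \leq |Q(-\infty)| + |Q(\xi - z)|.
\end{align*}
Summing over $(z, z_3)$ then yields
\begin{align*}
\suml_{z, z_3 \geq 0} |\gamma\curbr{\ddot u_+}(z) u(z_3) Q(\xi - z - z_3)| \leq \snorm{u}_1 \suml_{z \geq 0} |\gamma\curbr{\ddot u_+}(z) Q(\xi - z)| + |Q(-\infty)| \snorm{u}_1 \suml_{z \geq 0} |\gamma\curbr{\ddot u_+}(z)|.
\end{align*}
The first series on the right is finite by hypothesis. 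For the second, I observe that $|Q(\xi - z)| \to |Q(-\infty)|$ as $z \to \infty$: if $Q(-\infty) = 0$, the prefactor $|Q(-\infty)|$ eliminates the term, whereas if $Q(-\infty) \neq 0$, then $|Q(\xi - z)| \geq |Q(-\infty)|/2$ eventually, and the hypothesis forces $\gamma\curbr{\ddot u_+} \in \ell^1(\Z)$, making the second series converge. Absolute convergence of $S(\xi)$ is thereby secured, Fubini for series applies, and both iterated sums produce $u(0) Q(\xi)$.

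The main obstacle is precisely this case (ii) argument: the hypothesis controls only the sequence $\gamma\curbr{\ddot u_+}(z) Q(\xi - z)$, not the shifted values $\gamma\curbr{\ddot u_+}(z) Q(\xi - z - z_3)$, and in general (see the uniform case in Example \ref{Ex20251016}) $\gamma\curbr{\ddot u_+}$ itself need not lie in $\ell^1(\Z)$. Monotonicity of $Q$ is what reduces the shifted values to the boundary value $Q(\xi - z)$ up to the additive constant $|Q(-\infty)|$, and the dichotomy $Q(-\infty) = 0$ versus $Q(-\infty) \neq 0$ resolves the remaining ambiguity, in the latter case forcing the summability of $\gamma\curbr{\ddot u_+}$ as a consequence of the hypothesis itself.
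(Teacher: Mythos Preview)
Your proposal is correct and follows essentially the same route as the paper: both arguments start from the finite triangular identity (\ref{2025100802}), dispose of case~1 by observing that the sum is finite for $T$ large enough, and in case~2 establish absolute convergence of the full double series by exploiting the monotonicity of $Q$ to control the shifted values $Q(\xi-z-z_3)$.

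The one substantive difference lies in the pointwise bound used in case~2. The paper writes directly $|Q(\xi-z-z_3)| \le |Q(\xi-z)|$, which tacitly assumes $Q \ge 0$ (as is the case in the intended application, Corollary~\ref{BspXbelZdiskr}, where $Q$ is a distribution function). Your bound $|Q(\xi-z-z_3)| \le |Q(-\infty)| + |Q(\xi-z)|$ is valid for any non-decreasing $Q \in L^\infty(\R)$, and your dichotomy on $Q(-\infty)$ neatly eliminates the residual term: either it vanishes outright, or the hypothesis itself forces $\gamma\{\ddot u_+\} \in \ell^1(\Z)$. This makes your argument strictly more general than the paper's written proof, at the cost of one extra paragraph. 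The alternative Fubini reading you sketch (substituting $y=z+z_3$ and invoking Lemma~\ref{LemRepIndicator}) is redundant once you pass to the limit in (\ref{2025100802}), but it is harmless.
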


\begin{proof}
In the first case, $(R \ast \Theta\curbr{ \gamma\curbr{\ddot u_+} })(\xi) = \sum_{z=0}^{\floorfct{\xi-\xi_0}} \gamma\curbr{\ddot u_+}(z) \sum_{z_3=0}^{ \floorfct{\xi-\xi_0} - z } Q(\xi-z-z_3) u(z_3)$, which matches the sum on the right hand side of (\ref{2025100802}), for all $T > \floorfct{\xi-\xi_0}$. Hence, the asserted identity is obvious. In the second case, appealing to the monotonicity of $Q$, we get $\sum_{z_3=0}^\infty | Q(\xi-z-z_3)u(z_3) | \leq |Q(\xi-z)| \sum_{z_3=0}^\infty | u(z_3) | < \infty$. Thereby, we deduce that
\begin{align*}
\sum_{z = 0}^\infty |\gamma\curbr{ \ddot u_+ }(z)| \sum_{z_3=0}^\infty |Q(\xi-z-z_3) u(z_3) | \leq \sum_{z_3=0}^\infty |u(z_3) | \sum_{z = 0}^\infty |\gamma\curbr{ \ddot u_+ }(z)| |Q(\xi-z)| < \infty.
\end{align*}
Altogether, we conclude absolute and with respect to $T > 0$ uniform convergence of the double sum (\ref{2025100802}). Hence, considering the limit as $T \rightarrow \infty$, we may interchange the order of limit and summation. In this, $R(\xi-z) = \sum_{z_3=0}^\infty Q(\xi-z-z_3) u(z_3)$, which confirms the desired identity.
\end{proof}

In practice, a verification of the conditions of Theorem \ref{TheoDecII} can be quite hard. Before we underline the necessity of these conditions, we apply the result to a probabilistic setup, with almost no assumptions on the target distribution.

\begin{corollary}[left-bounded equidistant $\Tbb_\varepsilon$] \label{BspXbelZdiskr}
Suppose that $\Tbb_\varepsilon \subseteq \curbr{z_0+tz}_{z \in \N_0}$, for $z_0 \in \R$ and $t > 0$, with $F_\varepsilon\curbr{z_0} > 0$. Define $P_Y(\xi) := F_Y(z_0+t\xi)$ and
\begin{align*}
\ddot u_{\varepsilon, +}(\ell) := \delta_{ \curbr{0} }(\curbr{\ell}) - \frac{F_\varepsilon\curbr{z_0+t\ell}}{F_\varepsilon\curbr{z_0}}.
\end{align*}
Additionally assume that $( \gamma\curbr{\ddot u_{\varepsilon, +} }(z) F_X(t(\xi-z)) )_{z \in \Z} \in \ell^1(\Z)$, for each $\xi \in \R$. Then,
\begin{align*}
F_X(\xi) = ( F_\varepsilon\curbr{z_0} )^{-1}(P_Y \ast \Theta\curbr{ \gamma\curbr{\ddot u_{\varepsilon, +}} })( t^{-1} \xi ) \hspace{1cm} (\xi \in \R).
\end{align*}
\end{corollary}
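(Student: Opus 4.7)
The plan is to apply Theorem \ref{TheoDecII} (deconvolution III) directly, with $Q(\xi) := F_X(t\xi)$ and with the sequence $u(z) := F_\varepsilon\curbr{z_0+tz} \ONE_{\N_0}(z)$. Note that $Q \in L^\infty(\R)$ because any d.f.\ is bounded by $1$, that $u(z) = 0$ for $z \in -\N$ by construction, and that $u(0) = F_\varepsilon\curbr{z_0} \neq 0$ by assumption. Moreover, $\ddot u_+$ in the sense of (\ref{2025091102}) coincides identically with the sequence $\ddot u_{\varepsilon, +}$ of the corollary.

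The next step is to identify the associated right-hand side $R = Q \ast \Theta\curbr{u}$ with $P_Y$. Since $\Tbb_\varepsilon \subseteq \curbr{z_0+tz}_{z \in \N_0}$, equation (\ref{Verteilungsfaltung}) reduces to a discrete sum and yields
\begin{align*}
P_Y(\xi) = F_Y(z_0+t\xi) = \suml_{z=0}^\infty F_X(t(\xi-z)) F_\varepsilon\curbr{z_0+tz} = \suml_{z=0}^\infty Q(\xi-z) u(z),
\end{align*}
which by definition of $\Theta\curbr{u}$ and (\ref{2025020603}) matches $(Q \ast \Theta\curbr{u})(\xi)$. Hence $R = P_Y$, and the assumption $(\gamma\curbr{\ddot u_{\varepsilon, +}}(z) F_X(t(\xi-z)))_{z \in \Z} \in \ell^1(\Z)$ is exactly the third condition of part~2 of Theorem \ref{TheoDecII} when rewritten in terms of $Q$.

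It remains to verify the other two hypotheses of part~2: monotonicity of $Q$ is inherited from $F_X$ since $t > 0$, and $(u(z))_{z \in \Z} \in \ell^1(\Z)$ follows from $\sum_{z=0}^\infty F_\varepsilon\curbr{z_0+tz} \leq F_\varepsilon(\infty) = 1$. Theorem \ref{TheoDecII} then gives
\begin{align*}
Q(\eta) = (u(0))^{-1}(P_Y \ast \Theta\curbr{\gamma\curbr{\ddot u_{\varepsilon, +}}})(\eta) \hspace{1cm} (\eta \in \R),
\end{align*}
and the claim follows upon the substitution $\eta := t^{-1}\xi$, since $F_X(\xi) = Q(t^{-1}\xi)$ and $u(0) = F_\varepsilon\curbr{z_0}$.

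The only non-routine step is the reduction $R = P_Y$; all other checks are immediate, and the main conceptual content is that the left-bounded equidistant structure of $\Tbb_\varepsilon$ collapses the Stieltjes convolution (\ref{Verteilungsfaltung}) into the right-lateral discrete convolution required by Theorem \ref{TheoDecII}. No cancellation or convergence analysis has to be performed beyond what is already absorbed into the $\ell^1$-hypothesis.
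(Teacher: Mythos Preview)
Your proof is correct and follows essentially the same approach as the paper: you set $Q(\xi)=F_X(t\xi)$, $u(z)=F_\varepsilon\curbr{z_0+tz}$, identify $R=P_Y$ via the discrete form of the convolution equation, and invoke Theorem~\ref{TheoDecII} under condition~2. The only minor quibble is that the reference to (\ref{2025020603}) is not quite apt, since that identity concerns two discrete sequences rather than a general $Q\in L^\infty(\R)$ convolved with $\Theta\curbr{u}$; the equality $\sum_{z\geq 0}Q(\xi-z)u(z)=(Q\ast\Theta\curbr{u})(\xi)$ follows directly from the definition of the Stieltjes convolution with a discrete measure.
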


Notice that the condition on $F_X$ is always true if the support $\Tbb_X$ is bounded to the left. Plots for the estimation of such d.fs. by means of Corollary \ref{BspXbelZdiskr} can be found in Figure \ref{ExPlotsDecUnilatDiscCont}.

\begin{figure}[h]
\centering
\resizebox*{\textwidth}{!}{\includegraphics{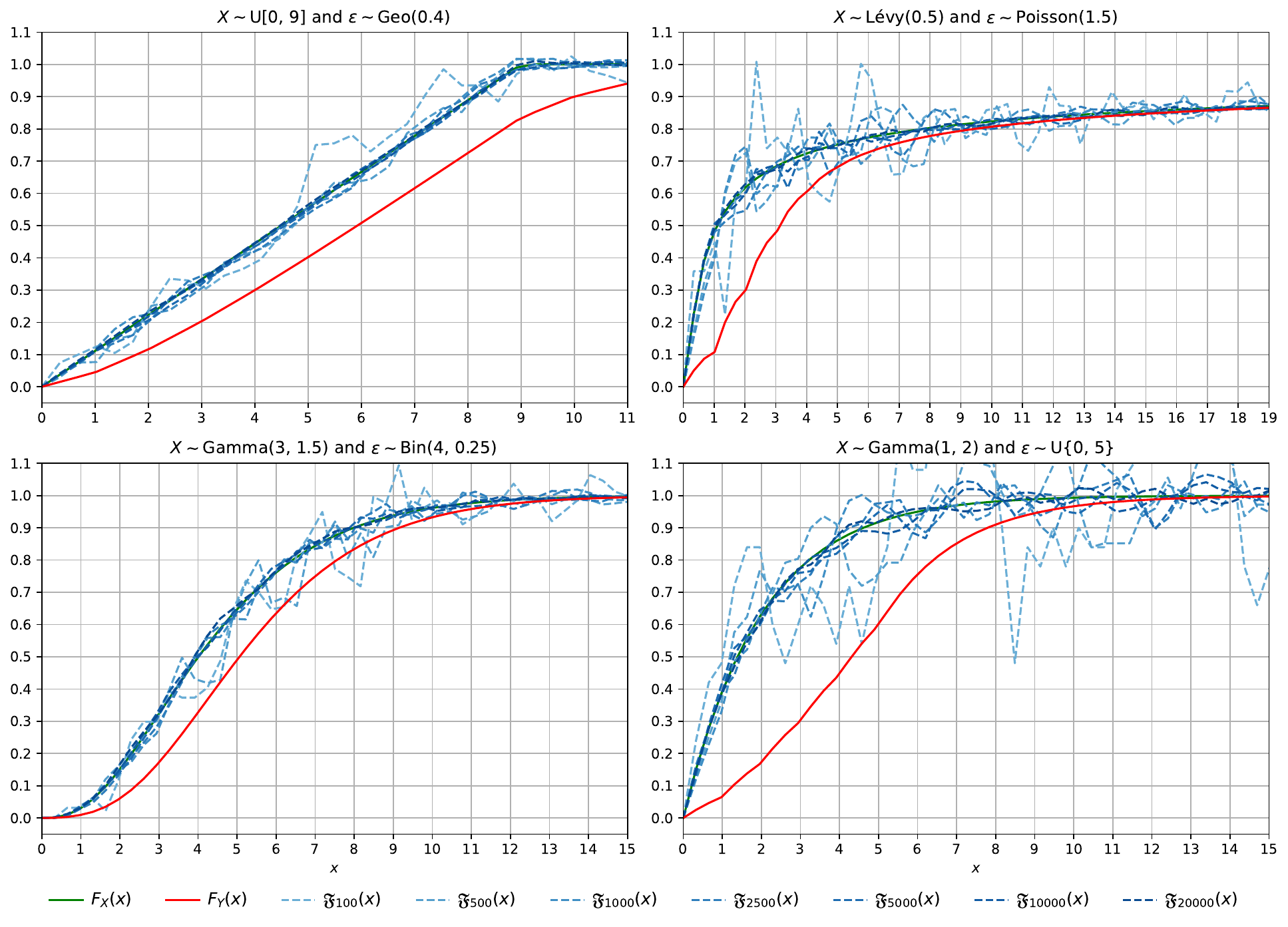}}
\caption{Plots for the deconvolution of a right-lateral continuous target d.f. $F_X$ from a blurred d.f. $F_Y$. The plug-in estimator $\mathfrak{F}_n$ for $F_X$ was constructed from Corollary \ref{BspXbelZdiskr}, based on a $Y$-sample of size $n \in \N$. In all cases, $(z_0, t) = (0, 1)$, so that $P_Y = F_Y$.}
\label{ExPlotsDecUnilatDiscCont}
\end{figure}

\begin{proof}[Proof of Corollary \ref{BspXbelZdiskr}]
In the described setting, $F_Y(z_0 + t\xi) = \sum_{z=0}^\infty F_\varepsilon\curbr{z_0+tz} F_X(t(\xi-z))$, for all $\xi \in \R$. Put differently, in terms of $R(\xi) := F_Y(z_0+t\xi)$, $Q(\xi) := F_X(t\xi)$ and $u_\varepsilon(z) := F_\varepsilon\curbr{z_0 + tz}$, we have $R = Q \ast \Theta\curbr{u_\varepsilon}$. In this, $Q$ is non-decreasing, $u_\varepsilon(z) = 0$, for $z \in -\N$, $u_\varepsilon(0) \neq 0$ and $(u_\varepsilon(z))_{z \in \Z} \in \ell^1(\Z)$. The asserted identity therefore directly follows from Theorem \ref{TheoDecII}.
\end{proof}

Figure \ref{ExPlotsDecBilatDiscCont} additionally displays plots for the estimated d.f. of a bilateral distribution. These were crafted straightforwardly, without a preliminary check of the respective condition of Theorem \ref{TheoDecII}. The results look promising and suggest that the theorem is indeed applicable. Yet, our next example warns us about a careless use of this theorem.

\begin{example}
Suppose that $X \sim \operatorname{Laplace}(0, \sigma)$ and $\varepsilon \sim \operatorname{Ber}(p)$, for $\sigma > 0$ and $0 < p < 1$. In these circumstances, $F_X(\xi) = \frac{1}{2} \exp\curbr{\frac{\xi}{\sigma}}$, for $\xi \leq 0$, and $F_\varepsilon\curbr{z} = p\delta_{ \curbr{1} }(\curbr{z}) + (1-p)\delta_{ \curbr{0} }(\curbr{z})$. Thus, $F_Y(\xi) = \kappa \exp\curbr{\frac{\xi}{\sigma}}$, for $\kappa := p\exp\curbr{-\frac{1}{\sigma}}+1-p$ and $\xi \leq 0$. Defining $u_\varepsilon(z) := F_\varepsilon\curbr{z}$, we will now show that $(F_Y \ast \Theta\curbr{ \gamma\curbr{\ddot u_{\varepsilon, +}} })(\xi)$ is unspecified, for all $\xi \leq 0$. First, for fixed $T \in \N_0$, by means of (\ref{2025101401}), we get
\begin{align*}
\suml_{z = 0}^T \gamma\curbr{\ddot u_{\varepsilon, +}}(z) F_Y(\xi-z) = \kappa e^{ \frac{\xi}{\sigma} } \suml_{z=0}^T (-1)^z e^{ z \curbr{ \log\rb{\frac{p}{1-p}} - \frac{1}{\sigma} } }.
\end{align*}
Letting $b := \log(p)-\log(1-p)-\sigma^{-1}$, for brevity, the geometric sum formula yields
\begin{align*}
\suml_{z = 0}^T \gamma\curbr{\ddot u_{\varepsilon, +}}(z) F_Y(\xi-z) = \kappa e^{ \frac{\xi}{\sigma} } \frac{ 1-(-e^b)^{T+1} }{1+e^b}.
\end{align*}
Now, the limit, as $T \rightarrow \infty$, of the left hand side is just equal to $(F_Y \ast \Theta\curbr{ \gamma\curbr{\ddot u_{\varepsilon, +}} })(\xi)$. However, the right hand side suggests that this limit may not exist. More precisely, it exists if and only if $b<0$. Conversely, if $b \geq 0$, i.e., $p > \frac{1}{2}$ and $\sigma \geq (\log(p)-\log(1-p))^{-1}$, the limit as $T \rightarrow \infty$ is undefined. In fact, if $b > 0$, the sum is even unbounded with respect to $T$.
\end{example}

To summarize the current paragraph, in some circumstances, the deconvolution of the unknown d.f. $F_X$ is possible and hence even the unbiasd estimation. Yet, the derivation of the above deconvolution formulae essentially exploits the assumed structure of the involved distributions. Aiming for a broader applicability, in the next paragraph, we will shed a new light on the convolution equation in an arbitrary framework, from which eventually a generalization of the above results will be obtained.

\begin{figure}[h]
\centering
\resizebox*{\textwidth}{!}{\includegraphics{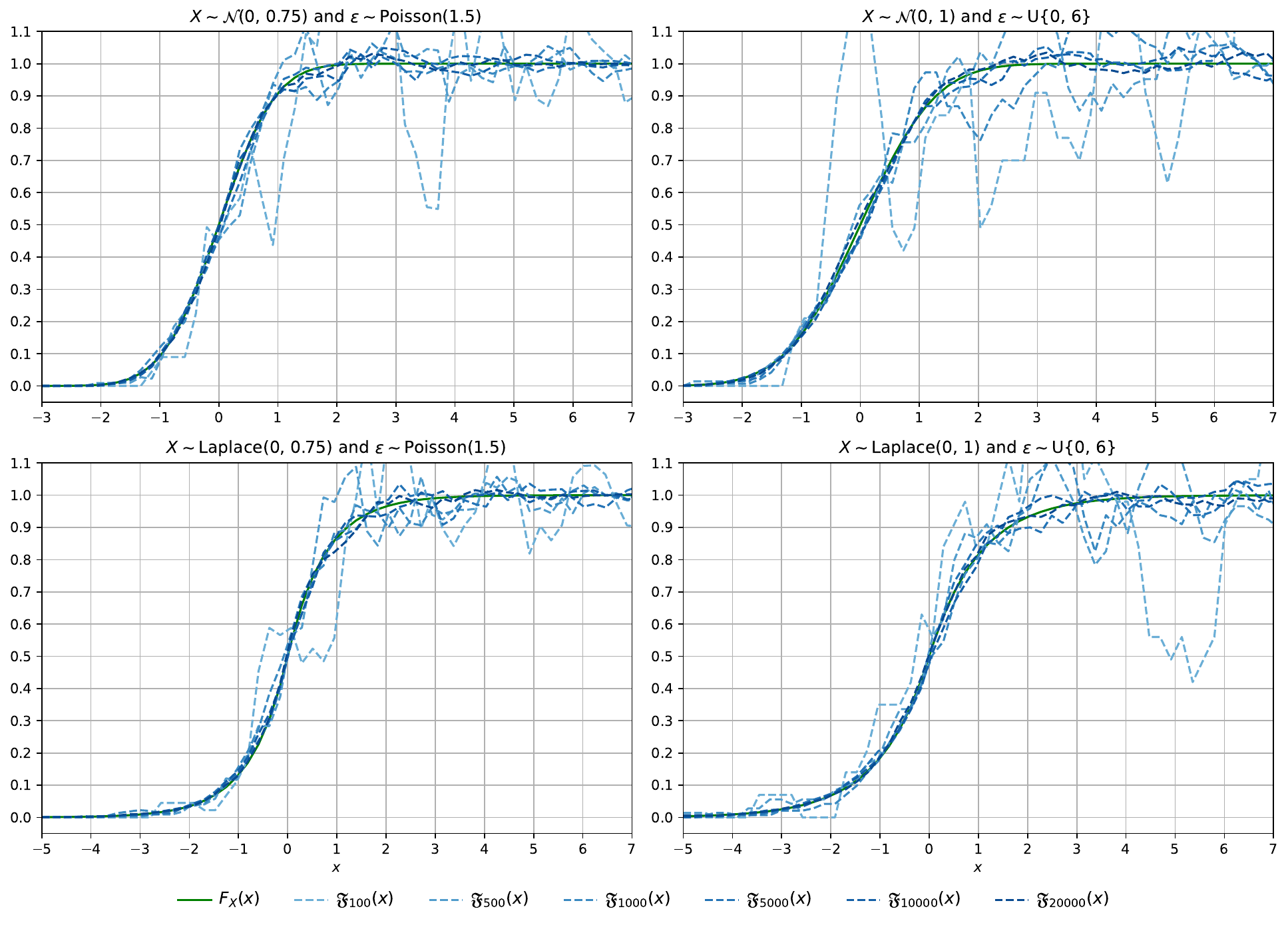}}
\caption{Deconvolution of a bilateral d.f. $F_X$ from a blurred d.f. $F_Y$, through a plug-in estimator $\mathfrak{F}_n$ that was constructed from a $Y$-sample of size $n \in \N$, with the aid of Corollary \ref{BspXbelZdiskr}. Again, $(z_0, t) = (0, 1)$ and hence $P_Y = F_Y$, in all scenarios.}
\label{ExPlotsDecBilatDiscCont}
\end{figure}

\section{A transformation of the convolution equation} \label{SecTransConvEq}

With regard to general setups of errors in variables, the technique from the previous section, aiming at a transformation of a first kind to a second kind convolution equation, requires a slight modification. For this purpose, we first introduce the linear convolution-type operator
\begin{align} \label{ContigRelDecFct5b}
\BigS_\mu\curbr{Q}(\xi) := \intl_{-\infty}^\infty Q(\xi-z) \mu(dz) \hspace{1cm} (\xi\in\R),
\end{align}
where $Q \in L^\infty(\R) \cup L^1(\R)$ and $\mu \in \mathcal{M}(\C, \mathcal{B}(\R))$ is fixed. Clearly, $\BigS_\mu\curbr{Q} \in L^p(\R)$, if $Q \in L^p(\R)$, for each $p \in \curbr{1, \infty}$. Also recall that integration with respect to $\mu$ is equivalent to integration with respect to the d.f. $F_\mu$. Therefore, $\BigS_\mu\curbr{F_\nu} = F_\nu \ast F_\mu$, with commuting factors, if $\nu \in \mathcal{M}(\C, \mathcal{B}(\R))$. In addition,
\begin{align} \label{2025101601}
\BigS_{\mu_2}\curbr{ \BigS_{\mu_1}\curbr{Q} } = \BigS_{\mu_1 \ast \mu_2 }\curbr{Q},
\end{align}
for all $\mu_1, \mu_2 \in \mathcal{M}(\C, \mathcal{B}(\R))$ and $Q \in L^\infty \cup L^1(\R)$. Finally, it is obvious that $\BigS_{\mu_\varepsilon}\curbr{F_X} = F_Y$, $\BigS_{\mu_\varepsilon}\curbr{f_X} = f_Y$ and $\BigS_{\mu_\varepsilon}\curbr{F_X\curbr{\cdot}} = F_Y\curbr{\cdot}$, i.e., the above operator generalizes the convolution equations (\ref{Verteilungsfaltung}), (\ref{Dichtenfaltung}) and (\ref{MassFaltung}). Specifically for the measure
\begin{align} \label{ConnBetwDiffDecFct6}
\pi_\mu := \delta_{ \curbr{0} } - \mu,
\end{align}
which satisfies $\pi_\mu \in \mathcal{M}(\C, \mathcal{B}(\R))$ and has the d.f. $F_{ \pi_\mu } = \ONE_{ \curbr{0 \leq \cdot} } - F_\mu$, we write $\Tau_\mu := \BigS_{ \pi_\mu }$, that is
\begin{align} \label{ConnBetwDiffDecFct5}
\Tau_\mu\curbr{Q}(\xi) =  \intl_\R Q(\xi-z) \pi_\mu(dz) \hspace{1cm} (\xi \in \R).
\end{align}
In particular, because the Dirac measure with mass at the origin corresponds to the identity of convolution of measures, $\BigS_\mu$ and $\Tau_\mu$ are related via the identity
\begin{align} \label{ConnBetwDiffDecFct4}
\Tau_\mu\curbr{Q} = Q - \BigS_\mu\curbr{Q}.
\end{align}
It is hence easy to see that the first kind convolution equation $P := \BigS_{\mu_\varepsilon}\curbr{Q}$, for $Q \in L^\infty(\R) \cup L^1(\R)$, implies that $\BigS_\eta\curbr{P} = \BigS_{\eta \ast \mu_\varepsilon}\curbr{Q}$, for any $\eta \in \mathcal{M}(\C, \mathcal{B}(\R))$, which in turn is equivalent to the second kind convolution equation
\begin{align} \label{2025101001}
Q = \BigS_\eta\curbr{P} + \Tau_{\eta \ast \mu_\varepsilon}\curbr{Q}.
\end{align}
The last equation is even equivalent to the initial equation $P = \BigS_{\mu_\varepsilon}\curbr{Q}$, e.g., if $\eta := \delta_{\curbr{0}}$. At this point, we mention an interesting interpretation of the operator $\Tau_{\eta \ast \mu_\varepsilon}$. In the initial model of errors in variables, the d.f. $F_Y$ is given by the convolution of $F_X$ and $F_\varepsilon$, so that $F_Y = F_X$ if and only if $\mu_\varepsilon = \delta_{ \curbr{0} }$. Consequently, in situations of errors in variables, the Dirac distribution with mass at the origin is associated with the optimal error distribution. On the other side, $\mu_\varepsilon \neq \delta_{\curbr{0}}$ is rather problematic, as then certainly $F_Y \neq F_X$. The function $\Tau_{\eta \ast \mu_\varepsilon}\curbr{F_X}$ represents the deviation of the transformed d.f. $F_\eta \ast F_Y$ from the target d.f. $F_X$, and we infer that the aim of $\eta$ is to induce the best resemblance between these two. Now, the advantage of the above second kind over the initial first kind convolution equation consists in the applicability of Picard's iteration \cite[see, e.g.,][Ch. II]{tricomi1985integral}. Indeed, similar to $\S$\ref{SomeSimpExamp}, this technique once again admits the approximation of the target $Q$ through a so-called Neumann sum. Suppose first, we wish to recover the d.f. $F_X$ from $\BigS_{\mu_\varepsilon}\curbr{F_X} = F_Y$. Then, in view of (\ref{2025101001}), we consider the recursion
\begin{align} \label{IntegralgleichungmitHcrekursiv}
\mathfrak{F}\curbr{\eta}(\cdot,m) := \BigS_\eta\curbr{F_Y} + \Tau_{\eta \ast \mu_\varepsilon}\curbr{\mathfrak{F}\curbr{\eta}(\cdot,m-1)} \hspace{1cm} (m \in \N),
\end{align}
with start function $\mathfrak{F}\curbr{\eta}(\cdot,0) := \BigS_\eta\curbr{F_Y}$. In order to determine a non-recursive form for $\mathfrak{F}\curbr{\eta}(\cdot,m)$, we introduce the Neumann partial sum
\begin{align} \label{2025020203}
\Pi\curbr{\eta}(A, m) := \suml_{\ell=0}^m \pi_{\eta \ast \mu_\varepsilon}^{\ast \ell}(A) \hspace{1cm} ((A, m) \in \mathcal{B}(\R) \times \N_0).
\end{align}
Notice that $\Pi\curbr{\eta}(\cdot, m) \in \mathcal{M}(\C, \mathcal{B}(\R))$, for each $m \in \N_0$, with $\Pi\curbr{\eta}(\cdot, 0) = \delta_{ \curbr{0} }$. Moreover, for convenience, we write $F_{\Pi\curbr{\eta}}(\xi, m) := F_{ \Pi\curbr{\eta}(\cdot, m) }(\xi)$ for the associated d.f.. We can now verify the following result.

\begin{lemma} \label{LemDecFct}
We have
\begin{align} \label{ContigRelDecFct4}
\mathfrak{F}\curbr{\eta}(\xi, m) = (F_\eta \ast F_Y \ast F_{\Pi\curbr{\eta}}(\cdot, m))(\xi) \hspace{1cm} ((\xi, m) \in \R \times \N_0).
\end{align}
\end{lemma}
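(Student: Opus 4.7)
The plan is to prove the identity by induction on $m \in \N_0$, exploiting two elementary facts about the operator $\BigS_\mu$: firstly, that it turns Stieltjes integration into a convolution of measures, i.e., $\BigS_\mu\curbr{F_\nu} = F_{\mu \ast \nu} = F_\mu \ast F_\nu$ for any $\mu, \nu \in \mathcal{M}(\C,\mathcal{B}(\R))$, and secondly, the composition rule (\ref{2025101601}). Recall also that $\Tau_\mu = \BigS_{\pi_\mu}$, where $\pi_\mu = \delta_{\curbr{0}} - \mu$.

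For the base case $m=0$, I would observe that $\Pi\curbr{\eta}(\cdot, 0) = \pi_{\eta \ast \mu_\varepsilon}^{\ast 0} = \delta_{\curbr{0}}$, whose d.f.\ is $\ONE_{\curbr{0 \leq \cdot}}$, the neutral element of Stieltjes convolution. Hence the right-hand side reduces to $F_\eta \ast F_Y = \BigS_\eta\curbr{F_Y} = \mathfrak{F}\curbr{\eta}(\cdot, 0)$ by the definition of the start function.

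For the inductive step, assuming the claim at index $m-1$, I would substitute into the recursion (\ref{IntegralgleichungmitHcrekursiv}) and exploit the linearity and the commutativity of measure-convolution as follows:
\begin{align*}
\mathfrak{F}\curbr{\eta}(\cdot, m)
&= \BigS_\eta\curbr{F_Y} + \BigS_{\pi_{\eta \ast \mu_\varepsilon}}\curbr{F_\eta \ast F_Y \ast F_{\Pi\curbr{\eta}}(\cdot, m-1)} \\
&= F_\eta \ast F_Y + F_\eta \ast F_Y \ast F_{\pi_{\eta \ast \mu_\varepsilon} \ast \Pi\curbr{\eta}(\cdot, m-1)} \\
&= F_\eta \ast F_Y \ast F_{ \delta_{\curbr{0}} + \pi_{\eta \ast \mu_\varepsilon} \ast \Pi\curbr{\eta}(\cdot, m-1) }.
\end{align*}
Here I have used $\BigS_\mu\curbr{F_\nu} = F_{\mu \ast \nu}$ to push the outer operator into the measure and then the distributivity of $\ast$ over the sum $\delta_{\curbr{0}} + \pi_{\eta \ast \mu_\varepsilon}\ast\Pi\curbr{\eta}(\cdot, m-1)$. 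The final step is then a telescoping identity at the level of measures: since $\pi_{\eta \ast \mu_\varepsilon}^{\ast 0} = \delta_{\curbr{0}}$, a shift of index gives
\begin{align*}
\delta_{\curbr{0}} + \pi_{\eta \ast \mu_\varepsilon} \ast \Pi\curbr{\eta}(\cdot, m-1) = \pi_{\eta \ast \mu_\varepsilon}^{\ast 0} + \suml_{\ell=0}^{m-1} \pi_{\eta \ast \mu_\varepsilon}^{\ast (\ell+1)} = \suml_{\ell=0}^{m} \pi_{\eta \ast \mu_\varepsilon}^{\ast \ell} = \Pi\curbr{\eta}(\cdot, m),
\end{align*}
which closes the induction.

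The only point requiring care, and the part I would flag as the main (mild) obstacle, is to ensure that all convolutions are well-defined as operations on $\mathcal{M}(\C, \mathcal{B}(\R))$. This is not really an issue here since $\eta, \mu_\varepsilon, \mu_Y \in \mathcal{M}(\C, \mathcal{B}(\R))$ are finite, so $\pi_{\eta \ast \mu_\varepsilon} \in \mathcal{M}(\C, \mathcal{B}(\R))$, its convolution powers lie in the same space, and $\Pi\curbr{\eta}(\cdot, m)$ is a finite linear combination of these; Banach-space closedness under convolution then ensures that every expression in the chain above is a well-defined finite complex measure, making the application of $\BigS_\mu\curbr{F_\nu}=F_{\mu\ast\nu}$ legitimate at each step.
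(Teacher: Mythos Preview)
Your proof is correct and follows essentially the same approach as the paper: induction on $m$, using the recursion (\ref{IntegralgleichungmitHcrekursiv}) together with $\BigS_\mu\curbr{F_\nu}=F_\mu\ast F_\nu$ and the telescoping identity $\delta_{\curbr{0}}+\pi_{\eta\ast\mu_\varepsilon}\ast\Pi\curbr{\eta}(\cdot,m-1)=\Pi\curbr{\eta}(\cdot,m)$. The paper's proof merely sketches the cases $m=0,1$ and then appeals to induction, whereas you have spelled out the general inductive step and the well-definedness of the measures involved; the underlying argument is the same.
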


\begin{proof}
Since $\BigS_\eta\curbr{F_Y} = F_\eta \ast F_Y$, by (\ref{IntegralgleichungmitHcrekursiv}), it is easy to see that $\mathfrak{F}\curbr{\eta}(\cdot, 0) = F_\eta \ast F_Y$ and $\mathfrak{F}\curbr{\eta}(\cdot, 1) = F_\eta \ast F_Y + F_\eta \ast F_Y \ast F_{ \pi_{\eta \ast \mu_\varepsilon} } = F_\eta \ast F_Y \ast F_{\Pi\curbr{\eta}}(\cdot, 1)$. Induction with respect to $m$ thus yields (\ref{ContigRelDecFct4}).
\end{proof}

For completeness, we also mention the case in which one wants to recover the density $f_X$ from $\BigS_{ \mu_\varepsilon }\curbr{f_X} = f_Y$. According to (\ref{2025101001}), we then define the recursion $\mathfrak{f}\curbr{\eta}(\cdot,m) := \BigS_\eta\curbr{f_Y} + \Tau_{\eta \ast \mu_\varepsilon} \curbr{ \mathfrak{f}\curbr{\eta}(\cdot,m-1) }$, for $m \in \N$, with  $\mathfrak{f}\curbr{\eta}(\cdot,0) := \BigS_\eta\curbr{f_Y}$. In computing subsequent iterates, it must be kept in mind that the involved convolutions do not commute, as they are mixtures of complex measures and functions from $L^1(\R)$. Yet, it is easy to confirm the following closed formula.

\begin{lemma}
Assume that $F_X$ or $F_\varepsilon$ is absolutely continuous. Then,
\begin{align} \label{ConnBetwDiffDecFct11}
\mathfrak{f}\curbr{\eta}(\xi, m) = \BigS_{ \eta \ast \Pi\curbr{\eta}(\cdot, m) }\curbr{f_Y}(\xi) \hspace{1cm} ((\xi, m) \in \R \times \N_0).
\end{align}
Especially, the function $\mathfrak{F}\curbr{\eta}(\xi,m)$ from (\ref{ContigRelDecFct4}) is differentiable at Lebesgue almost every $\xi \in \R$, with derivative
\begin{align} \label{ConnBetwDiffDecFct11X}
\frac{d}{d\xi}\mathfrak{F}\curbr{\eta}(\xi,m) = \mathfrak{f}\curbr{\eta}(\xi,m).
\end{align}
\end{lemma}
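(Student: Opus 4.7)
The plan is to establish (\ref{ConnBetwDiffDecFct11}) by induction on $m$, in parallel with the proof of Lemma \ref{LemDecFct}, and then to deduce (\ref{ConnBetwDiffDecFct11X}) by recognizing $\mathfrak{F}\curbr{\eta}(\cdot, m)$ as the d.f. of an absolutely continuous complex measure whose density is precisely $\mathfrak{f}\curbr{\eta}(\cdot, m)$. The base case $m = 0$ is immediate: since $\Pi\curbr{\eta}(\cdot, 0) = \delta_{\curbr{0}}$, one has $\BigS_{\eta \ast \Pi\curbr{\eta}(\cdot, 0)}\curbr{f_Y} = \BigS_{\eta}\curbr{f_Y} = \mathfrak{f}\curbr{\eta}(\cdot, 0)$ by the very definition of the recursion.

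For the inductive step, the crucial ingredient is the Neumann-type recursion
\begin{align*}
\Pi\curbr{\eta}(\cdot, m) = \delta_{\curbr{0}} + \pi_{\eta \ast \mu_\varepsilon} \ast \Pi\curbr{\eta}(\cdot, m-1),
\end{align*}
which follows at once from (\ref{2025020203}) by splitting off the $\ell = 0$ summand and pulling out one factor of $\pi_{\eta \ast \mu_\varepsilon}$ from the remaining convolution powers. Applying the recursion that defines $\mathfrak{f}\curbr{\eta}$ together with (\ref{ConnBetwDiffDecFct4}) and (\ref{2025101601}), and invoking the induction hypothesis, I expect to obtain
\begin{align*}
\mathfrak{f}\curbr{\eta}(\cdot, m) &= \BigS_\eta\curbr{f_Y} + \BigS_{\pi_{\eta \ast \mu_\varepsilon}}\curbr{\BigS_{\eta \ast \Pi\curbr{\eta}(\cdot, m-1)}\curbr{f_Y}} \\
&= \BigS_{\eta \ast (\delta_{\curbr{0}} + \pi_{\eta \ast \mu_\varepsilon} \ast \Pi\curbr{\eta}(\cdot, m-1))}\curbr{f_Y} = \BigS_{\eta \ast \Pi\curbr{\eta}(\cdot, m)}\curbr{f_Y},
\end{align*}
which is the desired identity.

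For (\ref{ConnBetwDiffDecFct11X}), Lemma \ref{LemDecFct} identifies $\mathfrak{F}\curbr{\eta}(\cdot, m)$ as the d.f. of the finite complex measure $\nu_m := \eta \ast \mu_Y \ast \Pi\curbr{\eta}(\cdot, m)$. Since $F_X$ or $F_\varepsilon$ is absolutely continuous, so is $\mu_Y$, with density $f_Y \in L^1(\R)$; a direct Fubini argument then shows that convolving the absolutely continuous measure $\mu_Y$ with the finite complex measure $\eta \ast \Pi\curbr{\eta}(\cdot, m)$ produces an absolutely continuous complex measure with density $\BigS_{\eta \ast \Pi\curbr{\eta}(\cdot, m)}\curbr{f_Y}$, which by (\ref{ConnBetwDiffDecFct11}) equals $\mathfrak{f}\curbr{\eta}(\cdot, m)$. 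The Lebesgue differentiation theorem then yields (\ref{ConnBetwDiffDecFct11X}). The main technical subtlety I foresee is the rigorous justification that the composition identity (\ref{2025101601}) extends to $L^1$-inputs, i.e., that $\BigS_{\mu_2}\curbr{\BigS_{\mu_1}\curbr{f_Y}} = \BigS_{\mu_1 \ast \mu_2}\curbr{f_Y}$ for $\mu_1, \mu_2 \in \mathcal{M}(\C, \mathcal{B}(\R))$; this rests on Fubini--Tonelli together with the finiteness of the total variation norms involved, and likewise underwrites the absolute continuity argument in the second part.
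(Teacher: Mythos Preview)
Your proposal is correct and follows essentially the same approach as the paper: the paper's proof simply observes that $f_Y$ exists under the absolute continuity hypothesis, invokes the composition identity (\ref{2025101601}) to verify (\ref{ConnBetwDiffDecFct11}), and then appeals to Lebesgue's differentiation theorem for (\ref{ConnBetwDiffDecFct11X}). Your induction argument is the natural unpacking of the phrase ``one then readily verifies,'' and your concern about (\ref{2025101601}) on $L^1$-inputs is already addressed in the paper, which states the identity for $Q \in L^\infty(\R) \cup L^1(\R)$.
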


\begin{proof}
First of all, absolute continuity of $F_X$ or $F_\varepsilon$ implies the existence of $f_Y$. By virtue of (\ref{2025101601}), one then readily verifies (\ref{ConnBetwDiffDecFct11}). The second assertion is a direct consequence of Lebesgue's differentiation theorem.
\end{proof}

We refer to $\mathfrak{F}\curbr{\eta}(\cdot, m)$ and $\mathfrak{f}\curbr{\eta}(\cdot, m)$, respectively, as the \textit{deconvolution function} and \textit{deconvolution density}. The index $m$ corresponds to the accuracy of the approximation for $F_X$ or $f_X$. Beware, however, that $\mathfrak{f}\curbr{\eta}(\cdot, m)$ may exist, although $f_X$ does not. In renewal theory, Neumann partial sums similar to (\ref{2025020203}), but with convolution powers of probability d.fs., are known as renewal functions or renewal measures. Here, appealing to the binomial convolution theorem (Lemma \ref{LemDekmkompakt02}), equivalently,
\begin{align*}
\Pi\curbr{ \eta }(\cdot, m) = \suml_{\ell = 0}^m \suml_{k=0}^\ell \binom{\ell}{k} (-1)^k (\eta \ast \mu_\varepsilon)^{\ast k}.
\end{align*}
In particular, $\pi_{\eta \ast \mu_\varepsilon}^{\ast \ell}$ corresponds to the binomial transform (see Appendix \ref{AppConvIdMeas}) of the sequence $( (\eta \ast \mu_\varepsilon)^{ \ast k} )_{k \in \N_0}$. These convolution powers are discrete measures, if and only if $\eta$ and $\mu_\varepsilon$ are both discrete measures, and are continuous else. In order to make out the behaviour of $\Pi\curbr{ \eta }(\cdot, m)$ and of $\mathfrak{F}\curbr{ \eta }(\cdot, m)$ with respect to $m$, we inevitably need to study the $\ell$-asymptotic behaviour of the binomial transform $\pi_{\eta \ast \mu_\varepsilon}^{\ast \ell}$. It is obvious and also will be confirmed below that this in turn substantially depends on the choice of $\eta$. Generally, when examining convergence of $\Pi\curbr{\eta}(\cdot, m)$ and of $\mathfrak{F}\curbr{\eta}(\cdot, m)$, there are two things to account for. Firstly, of course, the actual existence of the limit. Notice, since $\Pi\curbr{\eta}(\cdot, m)$ at least is a signed measure, that arguments for weak convergence become inapplicable. The second point concerns the identification of the limit. Although $\mathfrak{F}\curbr{ \eta }(\cdot, m)$ is an approximation for the desired target $F_X$, it is not clear if the limit indeed coincides with $F_X$. Our earlier findings, combined with Lemma \ref{LemConvCont20250902}, enable us to easily solve both issues, if $F_\varepsilon$ is associated with a special right-lateral discrete distribution. Denoting by $( \gamma\curbr{\ddot u_+}(z) )_{z \in \Z}$ the sequence from (\ref{2025091103}), the following holds.

\begin{theorem} \label{Theo20250902}
Assume that $\Tbb_\varepsilon \subseteq \curbr{z_0 + tz}_{z \in \N_0}$, for $t > 0$, and $F_\varepsilon\curbr{z_0} > 0$. Define $\lambda_{z_0} := ( F_\varepsilon\curbr{z_0} )^{-1}$ and $\ddot u_{\varepsilon, +}(z) := \delta_{ \curbr{0} }(\curbr{z}) - \lambda_{z_0} F_\varepsilon\curbr{z_0+tz}$. Moreover, for $A \in \mathcal{B}(\R)$, let $m_{t, A} := \max\curbr{ a : a \in \Z \cap t^{-1} A }$, with $m_{t, A} := -\infty$, if $\Z \cap t^{-1}A = \emptyset$. Suppose that $m_{t, A} < \infty$. Then,
\begin{align*}
\sup_{m \in \N_0} |\Pi\curbr{ \delta_{ \curbr{-z_0} } }|(A, m) < \infty,
\end{align*}
and, for each fixed $m_0 \geq m_{t, A}$, we have
\begin{align*}
\liml_{m \rightarrow \infty} \Pi\curbr{ \delta_{ \curbr{-z_0} } }(A, m) &= \lambda_{z_0} \Pi\curbr{ \lambda_{z_0} \delta_{ \curbr{-z_0} } }(A, m_0) \\
&= \lambda_{z_0} \suml_{z = 0}^\infty \gamma\curbr{\ddot u_{\varepsilon, +}}(z) \delta_{ \curbr{tz} }(A).
\end{align*}
\end{theorem}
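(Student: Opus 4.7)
The strategy is to transfer the statement to the right-lateral sequence $u(z) := F_\varepsilon\curbr{z_0+tz}$ on $\N_0$, exploit the identity $u \ast w = \delta_{\curbr{0}}$ from Lemma \ref{LemRepIndicator} (with $w := \lambda_{z_0}\gamma\curbr{\ddot u_{\varepsilon,+}}$), and reduce convergence to the asymptotic vanishing of a single $(m{+}1)$-fold discrete convolution power.

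First, since $\delta_{ \curbr{-z_0} }\ast\mu_\varepsilon$ is the discrete measure with mass $u(z)$ at $tz$, the signed measure $\pi_{\delta_{ \curbr{-z_0} }\ast\mu_\varepsilon}$ carries mass $\tilde u(z) := \delta_{\curbr{0}}(\curbr{z})-u(z)$ at $tz$, while $\pi_{\lambda_{z_0}\delta_{ \curbr{-z_0} }\ast\mu_\varepsilon}$ carries mass $\ddot u_{\varepsilon,+}(z)$ at $tz$. Setting $S_m(z) := \sum_{\ell=0}^m \tilde u^{\ast\ell}(z)$, one obtains $\Pi\curbr{\delta_{ \curbr{-z_0} }}(A,m) = \sum_{z=0}^{m_{t,A}} S_m(z)\,\delta_{\curbr{tz}}(A)$ and an analogous formula with $\ddot u_{\varepsilon,+}^{\ast\ell}(z)$ for $\Pi\curbr{\lambda_{z_0}\delta_{ \curbr{-z_0} }}$. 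The second equality is then immediate: because $\ddot u_{\varepsilon,+}(0) = 1 - \lambda_{z_0}u(0) = 0$, Corollary \ref{Lem2025091101} (and the vanishing (\ref{2025080102}) behind it) yields $\ddot u_{\varepsilon,+}^{\ast\ell}(z)=0$ for $\ell>z$, so the inner sum has already stabilized at $\gamma\curbr{\ddot u_{\varepsilon,+}}(z)$ whenever $m_0 \geq m_{t,A} \geq z$.

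For the first equality, I would derive the discrete geometric-telescoping identity
\begin{align*}
u \ast S_m \;=\; (\delta_{\curbr{0}} - \tilde u) \ast S_m \;=\; \delta_{\curbr{0}} - \tilde u^{\ast(m+1)},
\end{align*}
then convolve with $w$ and use $u \ast w = \delta_{\curbr{0}}$ (Lemma \ref{LemRepIndicator}) to reach the closed form $S_m = w - w \ast \tilde u^{\ast(m+1)}$; associativity is justified pointwise since all factors are right-lateral sequences and every evaluation involves only finite sums. The main analytic step -- and the one that genuinely uses $F_\varepsilon\curbr{z_0}>0$ -- is to show $\tilde u^{\ast(m+1)}(y) \to 0$ as $m\to\infty$, for each fixed $y \in \N_0$. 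Decomposing $\tilde u = (1-u(0))\delta_{\curbr{0}} - u_+$ with $u_+(j) := u(j)\ONE_{\curbr{j\geq 1}}$, the binomial convolution theorem (Appendix \ref{AppConvIdMeas}) yields
\begin{align*}
\tilde u^{\ast(m+1)}(y) \;=\; \suml_{k=0}^y \binom{m+1}{k}(1-u(0))^{m+1-k}(-1)^k u_+^{\ast k}(y),
\end{align*}
where only $k\leq y$ survives since $u_+^{\ast k}(y)=0$ for $k>y$. Each summand is a polynomial in $m$ multiplied by the strictly geometric factor $(1-u(0))^{m+1-k}$ with $0 \leq 1-u(0) < 1$, and therefore tends to $0$. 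Applied term by term in the finite sum $(w \ast \tilde u^{\ast(m+1)})(z) = \sum_{j=0}^z w(j)\,\tilde u^{\ast(m+1)}(z-j)$, this forces $S_m(z) \to w(z) = \lambda_{z_0}\gamma\curbr{\ddot u_{\varepsilon,+}}(z)$. The total-variation bound is then automatic: every $S_m(z)$ converges and is hence bounded in $m$, and the support $\curbr{z \in \N_0 : tz \in A}$ is finite by $m_{t,A}<\infty$, so $\lvert\Pi\curbr{\delta_{ \curbr{-z_0} }}\rvert(A,m) = \sum_{z=0}^{m_{t,A}}\lvert S_m(z)\rvert$ is uniformly bounded. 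I expect the decay step to be the only nontrivial obstacle; the rest is bookkeeping.
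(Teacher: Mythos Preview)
Your argument is correct, but it follows a genuinely different route from the paper's. The paper never writes $S_m = w - w\ast\tilde u^{\ast(m+1)}$; instead it invokes Lemma \ref{LemConvCont20250902} (the ``geometric convolution contiguity'' lemma) to express $\Pi\curbr{\delta_{\curbr{-z_0}}}(A,m) = \sum_{\ell=0}^m a_{m,\ell}\,(\delta_{\curbr{0}}-\ddot\mu_{\varepsilon-z_0})^{\ast\ell}(A)$ with coefficients $0\leq a_{m,\ell}\leq\lambda_{z_0}$ that converge to $\lambda_{z_0}$. Since $(\delta_{\curbr{0}}-\ddot\mu_{\varepsilon-z_0})^{\ast\ell}(A)=0$ for $\ell>m_{t,A}$, the uniform bound and the limit both fall out immediately by dominated convergence, without ever isolating a remainder term $\tilde u^{\ast(m+1)}$.

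Your approach is arguably more elementary: it avoids the somewhat technical identity of Lemma \ref{LemConvCont20250902} (with its gamma-integral computation of $\lim_m a_{m,\ell}$) and replaces it by a clean telescoping $u\ast S_m=\delta_{\curbr{0}}-\tilde u^{\ast(m+1)}$ plus a direct decay estimate. The paper's approach, on the other hand, gives the uniform total-variation bound first and independently of the limit, whereas you deduce boundedness from convergence; both orders are valid. One minor imprecision: your final display should read $|\Pi\curbr{\delta_{\curbr{-z_0}}}|(A,m)\leq\sum_{z=0}^{m_{t,A}}|S_m(z)|$ rather than equality, since not every $z\in\{0,\hdots,m_{t,A}\}$ need satisfy $tz\in A$; this does not affect the conclusion.
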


In view of the convergence properties of $\Pi\curbr{ \delta_{\curbr{-z_0}} }(\cdot, m)$, it is not a surprise, that we can not always expect uniformity with respect to $m \in \N_0$ of the finite total variation on $\R$ of this measure. Indeed, if this would be true, we would also have finite total variation on $\R$ of the limit measure. However, the atoms of this limit are given by $(\gamma\curbr{\ddot u_{\varepsilon, +}}(z))_{z \in \Z}$, and it hence can actually be of infinite total variation on $\R$, according to Example \ref{Ex20251016}. 

\begin{proof}[Proof of Theorem \ref{Theo20250902}]
Under the present assumptions, the measure $\ddot \mu_{\varepsilon-z_0} := \lambda_{z_0} \mu_{\varepsilon-z_0}$ satisfies $\Tbb_{\ddot \mu_{\varepsilon-z_0}^{\ast k}} \subseteq \curbr{tz}_{z \in \N_0}$, with $\ddot\mu_{\varepsilon-z_0}^{\ast k}(A) = \lambda_{z_0}^k \sum_{z=0}^{ m_{t,  A} } F_{\varepsilon-z_0}^{\ast k}\curbr{tz} \delta_{ \curbr{tz} }(A)$, for each $k \in \N_0$. Thus, $( \delta_{ \curbr{ 0 } } - \ddot\mu_{\varepsilon-z_0} )^{\ast \ell}(A) = \sum_{z = 0}^{ m_{t, A} } \ddot u_{\varepsilon, +}^{\ast \ell}(z) \delta_{ \curbr{tz} }(A)$, for all $\ell \in \N_0$. Furthermore, $\ddot u_{\varepsilon, +}(z) = 0$, for each $z \in -\N_0$. Hence, from Lemma \ref{LemDiscConvPwrs}, we deduce that $\ddot u_{\varepsilon, +}^{\ast \ell}(z) = 0$, for all $(\ell, z) \in \N_0^2$, with $z \leq \ell-1$, and eventually also that $( \delta_{ \curbr{ 0 } } - \ddot\mu_{\varepsilon-z_0} )^{\ast \ell}(A) = 0$, whenever $m_{t, A} < 0\leq \ell$ or $\ell > m_{t, A} \geq 0$. Lastly, Lemma \ref{LemConvCont20250902} yields that $\Pi\curbr{ \delta_{ \curbr{-z_0} } }(A, m) = \sum_{\ell=0}^m a_{m, \ell} (\delta_{\curbr{0}} - \ddot \mu_{\varepsilon-z_0} )^{\ast \ell}(A)$, where $0 \leq a_{m, \ell} \leq \lambda_{ z_0 }$, uniformly with respect to $(m, \ell)\in \N_0^2$, and $\lim_{m \rightarrow \infty} a_{m, \ell} = \lambda_{ z_0 }$. Therefore,
\begin{align*}
|\Pi\curbr{ \delta_{ \curbr{-z_0} } }|(A, m) \leq \lambda_{z_0} \sum_{\ell=0}^m |(\delta_{\curbr{0}} - \ddot \mu_{\varepsilon-z_0} )^{\ast \ell}|(A) \leq \lambda_{z_0} \sum_{\ell=0}^{ m_{t, A} } |(\delta_{\curbr{0}} - \ddot \mu_{\varepsilon-z_0} )^{\ast \ell}|(A),
\end{align*}
i.e., $|\Pi\curbr{ \delta_{ \curbr{-z_0} } }|(A, m)$ is uniformly bounded with respect to $m \in \N_0$. With regard to the second part of the theorem, it suffices to suppose that $m_{t, A} \geq 0$, since validity of the asserted identities is obvious for $m_{t,A} < 0$. Now, again from Lemma \ref{LemConvCont20250902} and by dominated convergence, as $m \rightarrow \infty$, we first conclude that the limits of $\Pi\curbr{ \delta_{ \curbr{-z_0} } }(A, m)$ and of $\lambda_{z_0} \Pi\curbr{ \lambda_{z_0} \delta_{ \curbr{-z_0} } }(A,m)$ exist and coincide. In this, for each $m \in \N_0$, we have
\begin{align*}
\Pi\curbr{ \lambda_{z_0} \delta_{\curbr{-z_0}} }(A, m) = \suml_{\ell=0}^m ( \delta_{\curbr{ 0 } } - \ddot \mu_{\varepsilon-z_0} )^{\ast \ell}(A) =  \sum_{\ell=0}^m \suml_{z=0}^{ m_{t, A} } \ddot u_{\varepsilon, +}^{\ast \ell}(z) \delta_{ \curbr{tz} }(A).
\end{align*}
But, again from the cancelling behaviour of the involved convolution powers, we infer that
\begin{align*}
\suml_{\ell=0}^m \suml_{z=0}^{ m_{t, A} } \ddot u_{\varepsilon, +}^{\ast \ell}(z) \delta_{ \curbr{tz} }(A) = \suml_{z=0}^{ m_{t, A} } \suml_{\ell=0}^{ \min\curbr{m, z} } \ddot u_{\varepsilon, +}^{\ast \ell}(z) = \suml_{z=0}^\infty \gamma\curbr{ \ddot u_{\varepsilon, +} }(z) \delta_{ \curbr{tz} }(A),
\end{align*}
whenever $m \geq m_{t, A}$, and the proof is finished.
\end{proof}

As a direct consequence of Theorem \ref{Theo20250902}, we can readily verify convergence of deconvolution function and density for right-lateral $F_X$, and provide a finite representation for their limits.

\begin{corollary}[deconvolution for right-lateral discrete $\varepsilon$] \label{CorFinDecI}
Under the assumptions of Theorem \ref{Theo20250902}, if there exists $\xi_0 \in \R$ with $F_X(\xi_0) = 0$, for each $m_0 \geq \lfloor t^{-1} (\xi-\xi_0) \rfloor$, we have
\begin{align*}
\liml_{m \rightarrow \infty} \mathfrak{F}\curbr{ \delta_{ \curbr{-z_0} } }(\xi, m) = F_X(\xi) = \mathfrak{F}\curbr{ \lambda_{z_0} \delta_{ \curbr{-z_0} } }(\xi, m_0 ) \hspace{1cm} (\xi \in \R).
\end{align*}
Finally, if $F_X$ is absolutely continuous, then also
\begin{align*}
\liml_{m \rightarrow \infty} \mathfrak{f}\curbr{ \delta_{ \curbr{-z_0} } }(\xi, m) = f_X(\xi) = \mathfrak{f}\curbr{ \lambda_{z_0} \delta_{ \curbr{-z_0} } }(\xi, m_0),
\end{align*}
for any $\xi \in \R$, with $\snorm{f_X(\xi-\cdot) \ONE_{t\N_0}}_\infty < \infty$.
\end{corollary}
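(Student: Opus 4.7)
The plan is to feed the representation from Lemma \ref{LemDecFct},
\begin{align*}
\mathfrak{F}\curbr{\eta}(\xi, m) = (F_\eta \ast F_Y \ast F_{\Pi\curbr{\eta}}(\cdot, m))(\xi),
\end{align*}
into Theorem \ref{Theo20250902} after specializing to $\eta = c\delta_{\curbr{-z_0}}$ with $c \in \curbr{1, \lambda_{z_0}}$. Convolving with a point mass is just a shift, so this reduces to
\begin{align*}
\mathfrak{F}\curbr{c\delta_{\curbr{-z_0}}}(\xi, m) = c \intl_{-\infty}^\infty F_Y(\xi + z_0 - y)\, \Pi\curbr{c\delta_{\curbr{-z_0}}}(dy, m).
\end{align*}
Two support observations are then exploited. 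Firstly, $\Pi\curbr{c\delta_{\curbr{-z_0}}}(\cdot, m)$ sits on $\curbr{tz}_{z \in \N_0}$, since $c\delta_{\curbr{-z_0}} \ast \mu_\varepsilon$ does, and this property survives convolution powers and finite sums. Secondly, because $F_X(\xi_0) = 0$ and $F_\varepsilon(z_0-) = 0$, the convolution $F_Y$ vanishes on $(-\infty, \xi_0 + z_0)$, so $F_Y(\xi + z_0 - tz) = 0$ whenever $z > t^{-1}(\xi - \xi_0)$. The integral therefore truncates to a sum with at most $\floorfct{t^{-1}(\xi - \xi_0)} + 1$ summands, a number that does not depend on $m$.

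Invoking Theorem \ref{Theo20250902} with $A = \curbr{tz}$, one has $m_{t, A} = z$, and hence $\Pi\curbr{\lambda_{z_0}\delta_{\curbr{-z_0}}}(\curbr{tz}, m_0) = \gamma\curbr{\ddot u_{\varepsilon, +}}(z)$ as soon as $m_0 \geq z$, while $\Pi\curbr{\delta_{\curbr{-z_0}}}(\curbr{tz}, m) \rightarrow \lambda_{z_0}\, \gamma\curbr{\ddot u_{\varepsilon, +}}(z)$ as $m \rightarrow \infty$. Since only finitely many summands appear, the limit passes through the sum without any dominated-convergence work, and both sides of the displayed identity collapse to the same expression
\begin{align*}
\lambda_{z_0} \suml_{z = 0}^{ \floorfct{t^{-1}(\xi - \xi_0)} } \gamma\curbr{\ddot u_{\varepsilon, +}}(z)\, F_Y(\xi + z_0 - tz).
\end{align*}
Identifying this finite sum with $F_X(\xi)$ is exactly what Corollary \ref{BspXbelZdiskr} provides; the $\ell^1$-hypothesis of that corollary is automatic here, because right-laterality of $F_X$ truncates its series in the same way.

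The density statement is handled in complete parallel, now starting from identity (\ref{ConnBetwDiffDecFct11}). Absolute continuity of $F_X$ forces $F_Y$ to possess a density, for which the pointwise representative $f_Y(\xi) = \intl_{-\infty}^\infty f_X(\xi - z) F_\varepsilon(dz)$ is well-defined on the grid $\curbr{\xi + z_0 - tz}_{z \in \N_0}$ thanks to the local boundedness hypothesis $\snorm{f_X(\xi - \cdot)\ONE_{t\N_0}}_\infty < \infty$, and inherits the vanishing of $f_X$ on $(-\infty, \xi_0)$. The finite-sum truncation therefore goes through verbatim, and the resulting expression is matched with $f_X(\xi)$ by applying Theorem \ref{TheoDecII} to $Q(\cdot) := f_X(t\cdot)$ together with the right-lateral sequence $u(z) := F_\varepsilon\curbr{z_0 + tz}$. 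The main obstacle throughout is really bookkeeping: keeping track of which pointwise representative of $f_Y$ is being used, and verifying that the various $\ell^1$-style summability hypotheses reduce to triviality in the right-lateral regime. Once the sums are truncated, the actual convergence step is immediate.
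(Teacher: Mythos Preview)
Your argument is correct and follows essentially the same route as the paper: reduce $\mathfrak{F}\curbr{c\delta_{\curbr{-z_0}}}(\xi,m)$ to a sum over the grid $\curbr{tz}_{z\in\N_0}$, use right-laterality of $F_Y$ to truncate to finitely many terms, feed Theorem~\ref{Theo20250902} into each atom, and then identify the result with $F_X$. Two small differences are worth noting. First, the paper passes the limit under the integral by invoking the Helly--Bray-type Lemma~\ref{Lem2025102701} together with the uniform total-variation bound from Theorem~\ref{Theo20250902}; your observation that the sum is already finite makes this machinery unnecessary and is the cleaner step. Second, for the identification the paper cites Lemma~\ref{LemRepIndicator} directly, whereas you go through Corollary~\ref{BspXbelZdiskr}; both are valid. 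One technical caveat: your appeal to Theorem~\ref{TheoDecII} for the density formally requires $Q=f_X(t\,\cdot)\in L^\infty(\R)$, which need not hold under the stated grid-boundedness hypothesis $\snorm{f_X(\xi-\cdot)\ONE_{t\N_0}}_\infty<\infty$; however, case~(1) of that theorem reduces to a finite-sum identity that does not actually use the global $L^\infty$ bound, so the conclusion survives.
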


The last corollary facilitates the unbiased estimation of the d.f. $F_X$ from an i.i.d. sample of $Y$-observations, however, not of the density $f_X$, since it is impossible to estimate $f_Y$ without a bias. Actually, the finite representation for $F_X$ is already included in Corollary \ref{BspXbelZdiskr}. Lastly, we observe that smaller values of the span $t > 0$ force larger choices of the truncation index $m_0$, in order for the approximation to coincide with the target.

\begin{proof}[Proof of Corollary \ref{CorFinDecI}]
Firstly, $\mu_{\varepsilon-z_0}$, $\Pi\curbr{ \delta_{ \curbr{-z_0} } }(\cdot, m)$ and its limit measure are discrete, with atoms in $\curbr{ tz }_{z \in \N_0}$. Secondly, $F_{Y-z_0}(\xi) = 0$, for $\xi \leq \xi_0$. Hence, according to (\ref{ContigRelDecFct4}), it holds that
\begin{align*}
\mathfrak{F}\curbr{ \delta_{ \curbr{-z_0} } }(\xi,m) = \intl_{ (-\infty, \xi-\xi_0 ] } F_{Y-z_0}(\xi - z) \Pi\curbr{ \delta_{ \curbr{-z_0} } }(dz,m) \hspace{1cm} (\xi \in \R).
\end{align*}
In this, since $F_{Y-z_0}$ is bounded and $\sup_{m \in \N_0} |\Pi\curbr{ \delta_{ \curbr{-z_0} } }|((-\infty, \xi-\xi_0 ],m) < \infty$, by Theorem \ref{Theo20250902} and Lemma \ref{Lem2025102701}, the limit as $m \rightarrow \infty$, can be carried out under the integral sign. Thereof, we get $\lim_{m \rightarrow \infty} \mathfrak{F}\curbr{ \delta_{ \curbr{-z_0} } }(\cdot,m) = \lambda_{z_0} (F_{Y-z_0} \ast \Theta\curbr{ \gamma\curbr{\ddot u_{\varepsilon, +}} })$. At the same time, through Lemma \ref{LemRepIndicator}, we identify $F_X = \lambda_{z_0} (F_{Y-z_0} \ast \Theta\curbr{ \gamma\curbr{\ddot u_{\varepsilon, +}} })$, i.e., we obtain the asserted limit for $\mathfrak{F}\curbr{ \delta_{ \curbr{-z_0} } }(\cdot,m)$. The second equality in turn, viz $\mathfrak{F}\curbr{ \lambda_{z_0} \delta_{ \curbr{-z_0} } }(\xi, m_0) = F_X(\xi)$, results from the last part of Theorem \ref{Theo20250902} and the fact that $m_{t, (-\infty, \xi-\xi_0] } = \lfloor t^{-1} (\xi-\xi_0) \rfloor$. Regarding the density $f_X$, we have $f_{Y-z_0}(\xi) = \int_{ (-\infty, \xi-\xi_0] } f_X(\xi-z) F_{\varepsilon-z_0}(dz)$, so that the given condition on $f_X$ implies that also $\snorm{ f_Y(\xi-\cdot) \ONE_{t\N_0} }_\infty < \infty$. The asserted identities for $f_X$ thus can be obtained in the same way as those for $F_X$.
\end{proof}

We confine our discussion on the convergence of the deconvolution function to the above results, and proceed with a brief reconsideration of the case of right-lateral discrete $F_X$ and $F_\varepsilon$, of which $F_X$ has a monotonic support. Then, in Corollary \ref{SatzdiskrDekWkeitsfkt}, we were able to provide a definite representation for $F_X$, solely in terms of quantities that are determined by $Y$ and $\varepsilon$. It was obtained almost in the same fashion as the above function $\mathfrak{F}(\cdot, m)$. Our starting point was the convolution equation for the d.fs., which we convolved with the d.f. of $\eta_\ell := ( F_\varepsilon\curbr{z_0} )^{-1} \delta_{ \curbr{ -\xi_\ell } }$, for $\ell \in \N_0$. The associated probability mass functions at $z_0$ are then related via $( F_\varepsilon\curbr{z_0} )^{-1} F_Y\curbr{z_0+\xi_\ell} = ( F_\varepsilon\curbr{z_0} )^{-1} \int_\R F_\varepsilon\curbr{z_0+\xi_\ell-x}F_X(dx)$. In this equation, we conceived the index $\ell \in \N_0$ as the new argument and eventually performed our transformation to a second kind integral equation, whose finite solution is exactly given in Corollary \ref{SatzdiskrDekWkeitsfkt}. Similarly, for the derivation of Corollary \ref{BspXdiskrZstet}, we convolved $F_Y(z_0) = (F_X \ast F_\varepsilon)(z_0)$ with $\eta_\ell := ( F_\varepsilon\curbr{z_0+\zeta_\ell-\xi_\ell} )^{-1} \delta_{ \curbr{-\zeta_\ell } }$ and again considered the result as a function of $\ell$. Recall that in both of these cases, we first determined the probability mass functions of $X$ and then the d.f.. If we would instead directly approximate $F_X$ by virtue of $\mathfrak{F}\curbr{ \eta }(\cdot, m)$, for a specific $\eta$, we would possibly not receive a finite representation. In fact, in the situation of Corollary \ref{BspXdiskrZstet} with a continuous $F_\varepsilon$, unlike $F_X$, the function $\mathfrak{F}\curbr{\eta}(\cdot, m)$ rather than a step function is continuous for any $\eta$. Altogether, the above discussion has shown that a sophisticated choice of the transforming measure $\eta$ may substantially simplify the solvability of a given deconvolution problem.
\\
\hspace*{1em}The presence of convolution powers as a key component of $\Pi\curbr{\eta}(\cdot, m)$ makes it tempting to put a special focus on convolution semi-groups. Roughly speaking, these are families of probability distributions that are closed under convolution. Accordingly, in these circumstances, $\Pi\curbr{ \eta }(\cdot, m)$ simplifies maybe in the most convenient way. Besides the gamma distribution with fixed scaling parameter, a very important example are stable distributions \cite[][$\S$16.2]{Klenke2020}, such as Cauchy and normal distribution. Since particularly the normal distribution is often considered the most devastating error distribution, we close this section with a short simulation study on such a scenario.

\begin{figure}[h]
\centering
\resizebox*{\textwidth}{!}{\includegraphics{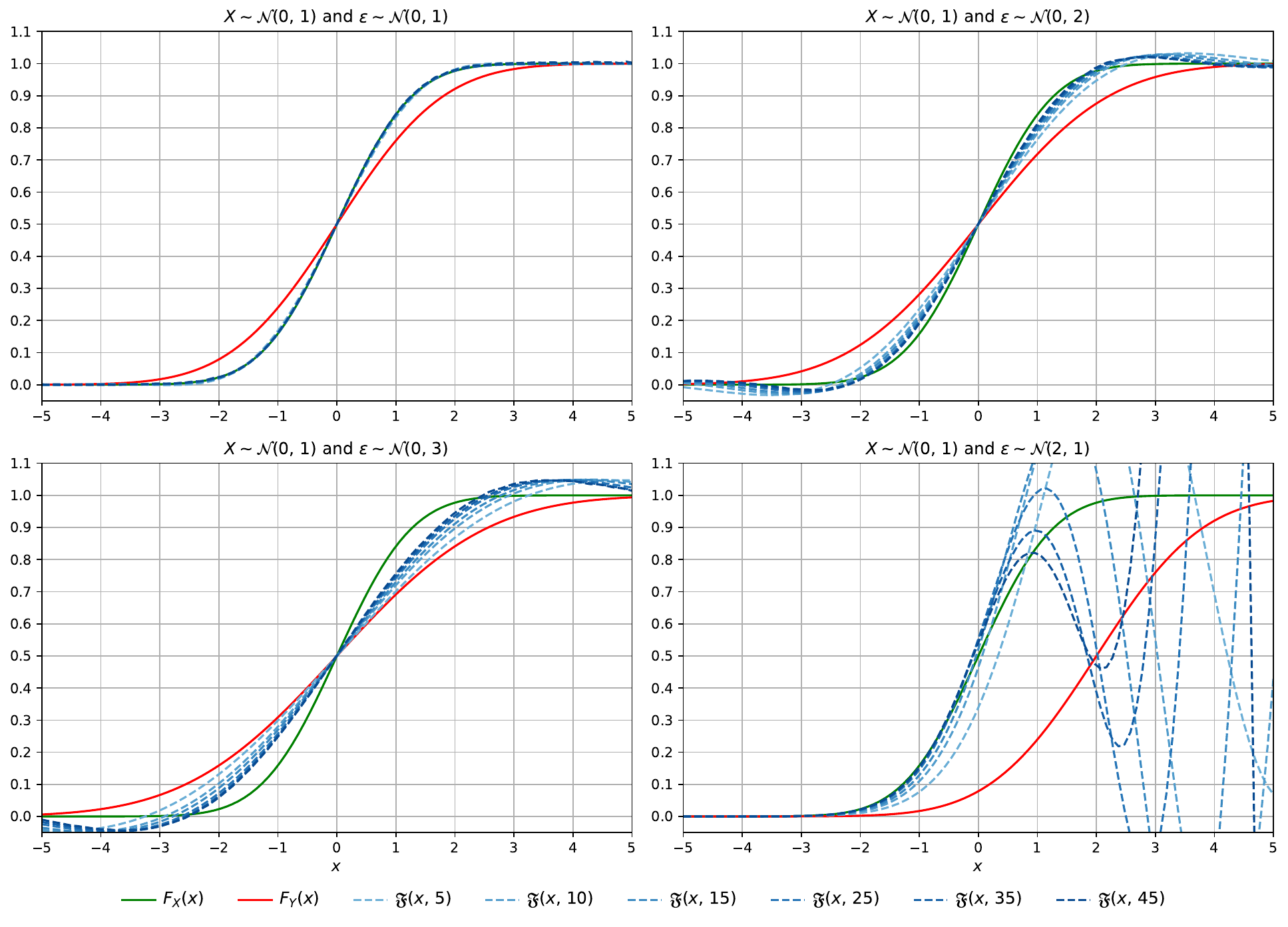}}
\caption{Plots for the deconvolution of a standard normal target from normally distributed errors. Notice how an increasement in the error variance substantially decreases the rate of convergence of $\mathfrak{F}(\cdot, m)$ to $F_X$. Furthermore, in case of non-centered errors, on some segments of the real axis apparently no convergence can be expected.}
\label{PlotsStableDec}
\end{figure}

\begin{example}[normal errors] \label{NormExamp}
Suppose that $\varepsilon \sim \mathcal{N}(c_\varepsilon, \sigma_\varepsilon^2)$, for $c_\varepsilon \in \R$ and $\sigma_\varepsilon > 0$. In these circumstances, $\mu_\varepsilon^{\ast k}$ again corresponds to a normal distribution. As a consequence, writing the Gauss integral in the form $\frac{1}{2}\operatorname{erf}(2^{-\frac{1}{2}}\xi) = (2\pi)^{-\frac{1}{2}} \int_{-\infty}^\xi \exp\curbr{ -\frac{x^2}{2} } dx$, for $\xi \in \R$, where $\operatorname{erf}$ denotes the error function \cite[see][(7.2.1)]{olver2010nist}, the deconvolution function $\mathfrak{F}(\cdot, m) := \mathfrak{F}\curbr{ \delta_{ \curbr{0} } }(\cdot, m)$ admits the representation
\begin{align*}
\mathfrak{F}(\xi, m) = \frac{1}{2} \suml_{\ell=0}^m \suml_{k=0}^\ell \binom{\ell}{k} (-1)^k \intl_{-\infty}^\infty \operatorname{erf}\rb{ \frac{\xi-kc_\varepsilon - y}{\sqrt{2k}\sigma_\varepsilon } } F_Y(dy) \hspace{1cm} ( (\xi, m) \in \R \times \N_0 ).
\end{align*}
If also $X \sim \mathcal{N}(c_X, \sigma_X^2)$, for $\mu_X \in \R$ and $\sigma_X > 0$, the above integral further simplifies. Actually, deconvolution in a completely normal setting is almost trivial, as mean and variance of $X$ directly can be obtained from those of $Y$ and $\varepsilon$. Nevertheless, we chose this example for a first illustration of the properties of $\mathfrak{F}(\cdot, m)$, which are shown in Figure \ref{PlotsStableDec}. These plots suggest that $\mathfrak{F}(\cdot, m)$ converges to $F_X$, as $m \rightarrow \infty$, at least for centered errors. In fact, by Fourier inversion and additional reference to \cite[][$\S$6]{Kaiser04102025}, one can easily confirm that $\sup_{\xi \in \R} | \mathfrak{F}(\xi, m) - F_X(\xi) | = \LandauO{ \curbr{ \curbr{\log m}^{-1} m^{-\sigma_\varepsilon^{-2}} } }$, as $m \rightarrow \infty$, whenever $c_X = c_\varepsilon = 0$ and $\sigma_X = 1$. On the other hand, for $c_\varepsilon\neq 0$, convergence seems to be restricted. We confined our plots to rather small values of $m$, since $m > 45$ inflicts numerical inaccuracies, due to the nature of the binomial coefficient. Finally, given a sample $Y_1, \hdots, Y_n$ of size $n \in \N$, a plug-in estimator for $\mathfrak{F}(\cdot, m)$ is given by
\begin{align*}
\mathfrak{F}_n(\xi, m) := \frac{1}{2n} \suml_{i=1}^n \suml_{\ell=0}^m \suml_{k=0}^\ell \binom{\ell}{k} (-1)^k \operatorname{erf}\rb{ \frac{\xi-kc_\varepsilon-Y_i}{\sqrt{2k} \sigma_\varepsilon } } \hspace{1cm} ( (\xi, m) \in \R \times \N_0 ).
\end{align*}
The performance of $\mathfrak{F}_n(\xi, m)$ is illustrated in Figure \ref{PlotsStableDecEmp}. We focussed on situations with a dominating error variance, since these are expectably more challenging.
\end{example}

\begin{figure}[h]
\centering
\resizebox*{\textwidth}{!}{\includegraphics{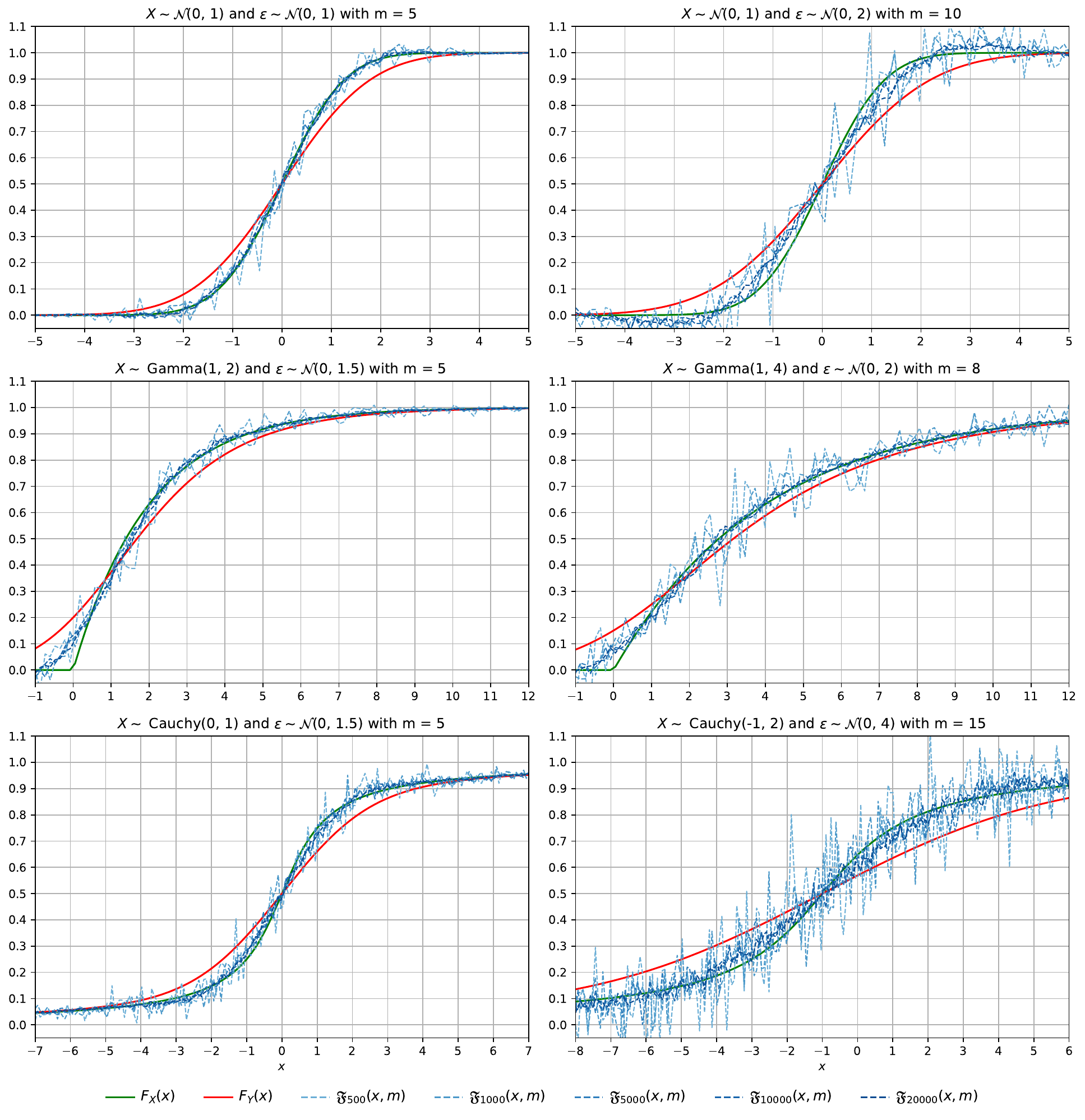}}
\caption{Plots of the plug-in estimator from Example \ref{NormExamp}, in purely normally distributed setups. Recall that $\mathfrak{F}_n(\cdot, m)$ is an estimator for $\mathfrak{F}(\cdot, m)$, which is our approximation for the target $F_X$. We confined to smaller values for $m$, to avoid numerical inaccuracies due to the binomial coefficient.}
\label{PlotsStableDecEmp}
\end{figure}

\section{Invertibility of the convolution operator} \label{SecInvertConvOp}

To complete our study, in the present section, we briefly discuss the invertibility of the convolution operator $\BigS_{\eta \ast \mu_\varepsilon}$ on selected Banach spaces of functions. A well-known criterion for the invertibility of an operator $\Tau \in \mathcal{L}(V)$ on a Banach space $V$ (see \cite[Theorem 10.22]{axler2019measure} or \cite[Lemma 11.16]{robinson2020introduction}) is that the operator norm fulfills
\begin{align} \label{TheoInvOp}
\norm{ \Tau } < 1,
\end{align}
in which case, however, rather than of $\Tau$, invertibility of $\operatorname{Id}_V-\Tau$ follows. More precisely, then, $(\operatorname{Id}_V-\Tau)^{-1} = \sum_{n=0}^\infty \Tau^n$ and $\snorm{(\operatorname{Id}_V-\Tau)^{-1}} \leq (1-\snorm{\Tau})^{-1}$, where $\Tau^n := \Tau \circ \hdots \circ \Tau$ stands for the $n$-times iteration of $\Tau$. Since probability d.fs. and densities, on which we focus in this text, merely form convex sets, we consider the operator $\BigS_{\eta \ast \mu_\varepsilon}$ on the larger spaces of finite signed measures on the Borel $\sigma$-algebra and on the space of absolutely integrable functions on $\R$. On the one hand, for $\nu \in \mathcal{M}(\R, \mathcal{B}(\R))$, we obtain through $\BigS_{\eta \ast \mu_\varepsilon}\curbr{F_\nu}$ the d.f. of the signed measure $\eta \ast \mu_\varepsilon \ast \nu \in \mathcal{M}(\R, \mathcal{B}(\R))$. In particular, $\BigS_{\eta\ast\mu_\varepsilon}\curbr{F_\nu}$ generates the signed measure $\int_A \BigS_{\eta \ast \mu_\varepsilon}\curbr{ F_\nu }(dz) = ( \eta \ast \mu_\varepsilon \ast \nu)(A)$, because integration with respect to a signed measure is equivalent to integration with respect to its d.f.. On the other hand, for $f \in L^1(\R)$, it is obvious that $\BigS_{\eta \ast \mu_\varepsilon}\curbr{f} \in L^1(\R)$. In both of these spaces, according to our earlier observations, the identity operator can be represented as a convolution integral, with integrating measure $\delta_{ \curbr{0} }$. Thus, in view of (\ref{ConnBetwDiffDecFct4}), determining invertibility of $\BigS_{\eta \ast \mu_\varepsilon}$ through the criterion (\ref{TheoInvOp}) amounts to a study of $\BigS_{\pi_{\eta \ast \mu_\varepsilon}} = \Tau_{\eta \ast \mu_\varepsilon}$. But then, the finite counterpart of the series representation for $\BigS_{\eta\ast\mu_\varepsilon}^{-1}$ corresponds to convolution with respect to the measure $\Pi\curbr{ \eta }(\cdot, m)$, which we already encountered in (\ref{2025020203}). Put differently, validity of (\ref{TheoInvOp}) implies the existence of the limit $\Pi\curbr{ \eta }(\cdot, \infty) := \lim_{m \rightarrow \infty} \Pi\curbr{ \eta }(\cdot, m)$, with
\begin{align*}
\BigS_{\eta\ast\mu_\varepsilon}^{-1} = \BigS_{ \Pi\curbr{ \eta }(\cdot, \infty) },
\end{align*}
and specifically the norm convergence of $\Pi\curbr{ \eta }(\cdot, m)$ in the respective Banach space. We begin with the determination of the operator norm of $\Tau_{\eta \ast \mu_\varepsilon}$. It is closely related to the total variation of the underlying signed measure $\pi_{\eta \ast \mu_\varepsilon}$ from (\ref{ConnBetwDiffDecFct6}).

\begin{lemma}[total variation norm] \label{TheoTotVarNorm}
Consider the Banach space $(\mathcal{M}\rb{\R, \mathcal{B}(\R)}, \snorm{\cdot}_{TV})$. Then, $\Tau_{\eta \ast \mu_\varepsilon} \in \mathcal{L}(\mathcal{M}(\R, \mathcal{B}(\R)))$, with
\begin{align*}
\norm{ \Tau_{\eta \ast \mu_\varepsilon} } = \abs{ \pi_{ \eta \ast \mu_\varepsilon } }(\R) \hspace{1cm} (\eta \in \mathcal{M}(\R, \mathcal{B}(\R))).
\end{align*}
\end{lemma}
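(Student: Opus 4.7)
The plan is to identify $\Tau_{\eta \ast \mu_\varepsilon}$ acting on the Banach space $(\mathcal{M}(\R, \mathcal{B}(\R)), \snorm{\cdot}_{TV})$ with the convolution map $\nu \mapsto \pi_{\eta \ast \mu_\varepsilon} \ast \nu$. As observed in the discussion preceding the lemma, if $\nu \in \mathcal{M}(\R, \mathcal{B}(\R))$ is represented through its d.f. $F_\nu$, then $\Tau_{\eta \ast \mu_\varepsilon}\curbr{F_\nu}$ is precisely the d.f. of the signed measure $\pi_{\eta \ast \mu_\varepsilon} \ast \nu \in \mathcal{M}(\R, \mathcal{B}(\R))$. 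Hence the norm of the image equals $\abs{\pi_{\eta \ast \mu_\varepsilon} \ast \nu}(\R)$, and computing the operator norm reduces to
\begin{align*}
\norm{ \Tau_{\eta \ast \mu_\varepsilon} } = \supl_{\snorm{\nu}_{TV} = 1} \abs{\pi_{\eta \ast \mu_\varepsilon} \ast \nu}(\R).
\end{align*}

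For the upper bound I would first establish the general sub-multiplicativity relation $\abs{\mu \ast \nu}(\R) \leq \abs{\mu}(\R)\abs{\nu}(\R)$ for $\mu, \nu \in \mathcal{M}(\R, \mathcal{B}(\R))$. Starting from the integral formula (\ref{2025070901}), the pointwise bound $\abs{(\mu \ast \nu)(A)} \leq (\abs{\mu} \ast \abs{\nu})(A)$ for every $A \in \mathcal{B}(\R)$ is immediate by pulling the absolute values inside the two integrals. Summing this over any disjoint partition $A_1, \hdots, A_K$ of a Borel set $A$ yields $\suml_{k=1}^K \abs{(\mu \ast \nu)(A_k)} \leq (\abs{\mu} \ast \abs{\nu})(A)$, and taking the supremum over partitions gives $\abs{\mu \ast \nu}(A) \leq (\abs{\mu} \ast \abs{\nu})(A)$. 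Specialising to $A = \R$ and using $(\abs{\mu} \ast \abs{\nu})(\R) = \abs{\mu}(\R)\abs{\nu}(\R)$ for positive measures produces the announced estimate, from which $\norm{ \Tau_{\eta \ast \mu_\varepsilon} } \leq \abs{\pi_{\eta \ast \mu_\varepsilon}}(\R)$ is immediate.

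For the matching lower bound I would simply plug in $\nu := \delta_{\curbr{0}}$. Since the Dirac measure at the origin is the neutral element of convolution in $\mathcal{M}(\C, \mathcal{B}(\R))$, as recalled in Section \ref{SecNotNPrel}, we have $\pi_{\eta \ast \mu_\varepsilon} \ast \delta_{\curbr{0}} = \pi_{\eta \ast \mu_\varepsilon}$, while $\snorm{\delta_{\curbr{0}}}_{TV} = 1$. This exhibits a unit-norm element whose image has total variation exactly $\abs{\pi_{\eta \ast \mu_\varepsilon}}(\R)$, so the operator norm attains the upper bound and equality follows.

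The only step requiring real attention is the sub-multiplicativity of total variation under convolution, and there the main obstacle is purely notational bookkeeping: one has to carefully justify passing absolute values across the iterated integral in (\ref{2025070901}), using that $\abs{\mu}$ and $\abs{\nu}$ are themselves finite positive measures on $(\R, \mathcal{B}(\R))$. A Hahn--Jordan decomposition provides an equivalent four-term bilinearity proof if one prefers to avoid manipulating the signed integrals directly. No other subtleties are anticipated.
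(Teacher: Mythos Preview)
Your argument is essentially identical to the paper's: both identify the operator with convolution by $\pi_{\eta\ast\mu_\varepsilon}$, use the sub-multiplicativity $\abs{\mu\ast\nu}(\R)\leq\abs{\mu}(\R)\abs{\nu}(\R)$ for the upper bound, and test against $\nu=\delta_{\curbr{0}}$ for sharpness. The only difference is that you sketch a proof of the sub-multiplicativity inequality, whereas the paper simply cites it as well known.
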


\begin{proof}
Firstly, for $\mu, \nu \in \mathcal{M}\rb{\R, \mathcal{B}(\R)}$, it is well known that $|\mu \ast \nu|(\R) \leq |\mu|(\R)|\nu|(\R)|$. Secondly, by definition of the operator norm, it holds that
\begin{align*}
\norm{ \Tau_{\eta \ast \mu_\varepsilon} } = \supl\rrb{ \abs{ \pi_{\eta \ast \mu_\varepsilon} \ast \nu } (\R) : \nu \in \mathcal{M}(\R, \mathcal{B}(\R)),~ \abs{\nu}(\R) = 1 }.
\end{align*}
Therefore, $\snorm{\Tau_{\eta \ast \mu_\varepsilon}} \leq \abs{ \pi_{\eta \ast \mu_\varepsilon } } (\R) < \infty$. But $\delta_{\rrb{0}} \in \mathcal{M}(\R,\mathcal{B}(\R))$, with $|\delta_{\rrb{0}}|(\R) = 1$ and $\int_A\Tau_{\eta \ast \mu_\varepsilon}\curbr{ \delta_{ \curbr{0} } }(dz) = \pi_{\eta \ast \mu_\varepsilon}(A)$, for all $A \in \mathcal{B}(\R)$. We conclude that $\snorm{\Tau_{\eta \ast \mu_\varepsilon}\curbr{ \delta_{ \curbr{0} } } }_{TV} = \snorm{\pi_{\eta \ast \mu_\varepsilon} }_{TV}$, which shows that the asserted bound is sharp.
\end{proof}

As the difference of a degenerate and a signed measure, the type of $\pi_{ \eta \ast \mu_\varepsilon }$ usually depends on the properties of $\eta$ and $\mu_\varepsilon$. Specifically the evaluation of the total variation can be very complicated. A useful tool in this context can be the Jordan decomposition \cite[cf.][Theorem 9.30]{axler2019measure}, according to which any signed measure can be represented as the difference of two unique non-negative measures that are singular with respect to each other. The total variation then just equals the sum of the total variations of each measure. However, it is no simple task to find the Jordan decomposition. In particular, despite $\mu_\varepsilon$ is a probability measure, the Jordan decomposition of the convolution of $\mu_\varepsilon$ with the signed measure $\eta$ in general does not coincide with the Jordan decomposition of $\eta$ convolved with $\mu_\varepsilon$. The latter is then just a difference of two measures, however, these are not necessarily singular with respect to each other. As a matter of fact, unfortunately, the total variation of $\pi_{\eta \ast \mu_\varepsilon}$ can not be computed without additional assumptions on $\eta$. In case of continuous ingredients the following holds, due to the fact that the smoothest ingredient always dominates convolution.

\begin{lemma} \label{LemTotVarCont}
Let $\eta \in \mathcal{M}(\R, \mathcal{B}(\R))$. Then, $| \pi_{\eta \ast \mu_\varepsilon} |(\R) \geq 1$, if $\mu_\varepsilon$ or $\eta$ is continuous.
\end{lemma}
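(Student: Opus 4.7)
The plan is to localise the total variation to a single point, exploiting the degenerate part of $\pi_{\eta \ast \mu_\varepsilon} = \delta_{\curbr{0}} - \eta \ast \mu_\varepsilon$ where it has all its mass, namely the origin. The key observation is that if either factor in the convolution has no atoms, then neither does the convolution itself, so the signed measure $\eta \ast \mu_\varepsilon$ cannot cancel the unit atom of $\delta_{\curbr{0}}$ at the origin.

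More concretely, I would first compute $(\eta \ast \mu_\varepsilon)(\curbr{0})$ from the defining integral (\ref{2025070901}), writing
\begin{align*}
(\eta \ast \mu_\varepsilon)(\curbr{0}) = \intl_\R \intl_\R \ONE_{\curbr{0}}(x+y)\,\mu_\varepsilon(dx)\,\eta(dy) = \intl_\R \mu_\varepsilon(\curbr{-y})\,\eta(dy).
\end{align*}
If $\mu_\varepsilon$ is continuous, then $\mu_\varepsilon(\curbr{-y}) = 0$ for every $y \in \R$, so the integral vanishes. Since convolution of measures commutes, the symmetric argument covers the case in which $\eta$ is continuous instead. In either case, $(\eta \ast \mu_\varepsilon)(\curbr{0}) = 0$, and consequently
\begin{align*}
\pi_{\eta \ast \mu_\varepsilon}(\curbr{0}) = \delta_{\curbr{0}}(\curbr{0}) - (\eta \ast \mu_\varepsilon)(\curbr{0}) = 1.
\end{align*}

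The final step is to invoke monotonicity of the total variation. Taking the singleton $\curbr{0}$ and its complement as a disjoint decomposition of $\R$ in the definition of $|\pi_{\eta \ast \mu_\varepsilon}|(\R)$ yields
\begin{align*}
| \pi_{\eta \ast \mu_\varepsilon} |(\R) \geq | \pi_{\eta \ast \mu_\varepsilon}(\curbr{0}) | + | \pi_{\eta \ast \mu_\varepsilon}( \R \setminus \curbr{0}) | \geq 1,
\end{align*}
which is the asserted lower bound.

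I do not anticipate any serious obstacle; the proof is essentially a direct computation. The only delicate point is to justify that ``continuous'' (in the sense used for complex or signed measures in the preliminaries, i.e., absence of atoms) is preserved under convolution when one of the two factors is a signed measure rather than a probability measure. This is handled cleanly by Fubini applied to the explicit integral representation above, since the integrand $\mu_\varepsilon(\curbr{-y})$ is non-negative and vanishes pointwise, so its integral against $\eta$ is zero regardless of the sign structure of $\eta \in \mathcal{M}(\R, \mathcal{B}(\R))$.
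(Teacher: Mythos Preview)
Your proposal is correct and follows essentially the same route as the paper: both arguments localise the total variation to the singleton $\curbr{0}$, observe that continuity of one factor forces $(\eta \ast \mu_\varepsilon)(\curbr{0}) = 0$ and hence $\pi_{\eta \ast \mu_\varepsilon}(\curbr{0}) = 1$, and then use additivity (or the defining supremum) of the total variation over $\curbr{0}$ and its complement. Your version is slightly more explicit in justifying $(\eta \ast \mu_\varepsilon)(\curbr{0}) = 0$ via the integral representation, but the core idea is identical.
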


\begin{proof}
Since $|\pi_{\eta \ast \mu_\varepsilon}| : \mathcal{B}(\R) \rightarrow [0, \infty)$ is a measure, from $\sigma$-additivity, we infer that $|\pi_{\eta \ast \mu_\varepsilon}|(\R) = |\pi_{\eta \ast \mu_\varepsilon}|(\curbr{0}) + |\pi_{\eta \ast \mu_\varepsilon}|(\R \setminus \curbr{0})$. In this, $\pi_{\eta \ast \mu_\varepsilon}(\curbr{0}) = 1$, because $(\eta \ast \mu_\varepsilon)(\curbr{0}) = 0$, by continuity, and hence also $|\pi_{\eta \ast \mu_\varepsilon}|(\curbr{0}) = 1$.
\end{proof}

We remind the reader that continuity of $\mu_\varepsilon$ is predetermined in applications, and only the structure of $\eta$ can be chosen. Finally, since Lemma \ref{LemTotVarCont} shows that the condition (\ref{TheoInvOp}) can never hold for continuous $\mu_\varepsilon$ or $\eta$, we confine our subsequent discussion to cases, in which both measures have at least one atom. Besides, we assume that $\eta \geq 0$. Consequently, writing $\pi_{\eta \ast \mu_\varepsilon}$, according to (\ref{ConnBetwDiffDecFct6}), in the form
\begin{align} \label{2025071701}
\pi_{\eta \ast \mu_\varepsilon}(A) &= \delta_{\rrb{0}}(A) \rb{ 1 - (\eta \ast \mu_\varepsilon)(\curbr{0}) } - (\eta \ast \mu_\varepsilon)(A\setminus\rrb{0}) \hspace{1cm} (A \in \mathcal{B}(\R)),
\end{align}
directly unfolds the Jordan decomposition, and thereby conveniently facilitates the calculation of the total variation.

\begin{lemma} \label{LemVarDegMeas}
For each $\eta \in \mathcal{M}(\R, \mathcal{B}(\R))$ with $\eta \geq 0$, we have
\begin{align*}
\abs{ \pi_{\eta \ast \mu_\varepsilon} }(\R) =
\begin{cases}
1 + \eta(\R) - 2(F_\eta \ast F_\varepsilon)\curbr{0}, & \mbox{if } 0 \leq (F_\eta \ast F_\varepsilon)\curbr{0} < 1, \\
\eta(\R) - 1 & \mbox{if } (F_\eta \ast F_\varepsilon)\curbr{0} \geq 1.
\end{cases}
\end{align*}
Notice that $(F_\eta \ast F_\varepsilon)\curbr{0} = \sum_{z \in D_{F_\eta} \cap D_{F_\varepsilon}} F_\varepsilon\curbr{z} F_\eta\curbr{-z}$.
\end{lemma}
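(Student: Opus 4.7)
The plan is to work directly from the representation~(\ref{2025071701}). Abbreviating $c := (F_\eta \ast F_\varepsilon)\curbr{0} = (\eta \ast \mu_\varepsilon)(\curbr{0})$, note that $\eta \ast \mu_\varepsilon$ is a finite nonnegative measure with total mass $\eta(\R)$, since $\eta\geq 0$ and $\mu_\varepsilon$ is a probability measure. Thus~(\ref{2025071701}) decomposes $\pi_{\eta \ast \mu_\varepsilon}$ into a contribution $(1-c)\delta_{\curbr{0}}$ concentrated at the origin and a contribution $-(\eta \ast \mu_\varepsilon)(\cdot \setminus \curbr{0})$ concentrated on $\R \setminus \curbr{0}$. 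The natural case split is by the sign of $1-c$.

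In the case $0 \leq c < 1$, both $\mu^+ := (1-c)\delta_{\curbr{0}}$ and the measure $\mu^-$ defined by $\mu^-(A) := (\eta \ast \mu_\varepsilon)(A \setminus \curbr{0})$ are nonnegative and mutually singular, being carried by the disjoint sets $\curbr{0}$ and $\R \setminus \curbr{0}$. By uniqueness of the Jordan decomposition \cite[Theorem 9.30]{axler2019measure}, this is precisely the Jordan decomposition of $\pi_{\eta \ast \mu_\varepsilon}$, from which
\begin{align*}
\abs{ \pi_{\eta \ast \mu_\varepsilon} }(\R) = \mu^+(\R) + \mu^-(\R) = (1-c) + \rb{\eta(\R) - c} = 1 + \eta(\R) - 2c.
\end{align*}
In the case $c \geq 1$, both summands on the right-hand side of~(\ref{2025071701}) are nonpositive, so $\pi_{\eta \ast \mu_\varepsilon} \leq 0$ on all of $\mathcal{B}(\R)$. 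Consequently, $\abs{\pi_{\eta \ast \mu_\varepsilon}}(\R) = -\pi_{\eta \ast \mu_\varepsilon}(\R) = \eta(\R) - 1$, upon using that $\pi_{\eta \ast \mu_\varepsilon}(\R) = 1 - (\eta \ast \mu_\varepsilon)(\R) = 1 - \eta(\R)$.

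The closing formula for $(F_\eta \ast F_\varepsilon)\curbr{0}$ follows from~(\ref{2025070901}) with $A = \curbr{0}$: Fubini reduces the double integral to $\int \eta(\curbr{-z})\mu_\varepsilon(dz)$, whose integrand is nonzero only at countably many $z$ where both measures charge a single point, yielding the displayed sum. No step here is a genuine obstacle; the key observation is simply that~(\ref{2025071701}) already exhibits the Jordan decomposition when $c < 1$ and collapses $\pi_{\eta \ast \mu_\varepsilon}$ to a nonpositive signed measure when $c \geq 1$, so the bulk of the argument is bookkeeping. The only point that requires a moment of care is recognising the mutual singularity in the first case, which is immediate from the disjointness of the carrier sets.
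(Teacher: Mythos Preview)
Your proof is correct and follows essentially the same approach as the paper: both arguments use the decomposition (\ref{2025071701}), split into the cases $c<1$ and $c\geq 1$, recognise the Jordan decomposition by mutual singularity in the first case, and observe that the measure is nonpositive in the second. Your version is slightly more explicit in naming $\mu^+$ and $\mu^-$ and in justifying the closing formula via Fubini, but the substance is identical.
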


\begin{proof}
First of all, since $\mu_\varepsilon(\R) = 1$, it is obvious that $(\eta \ast \mu_\varepsilon)(\R) = \eta(\R)$. Moreover, $(\eta \ast \mu_\varepsilon)(\curbr{0}) = (F_\eta \ast F_\varepsilon)\curbr{0}$. Now, by inspection of (\ref{2025071701}), we see that $\pi_{\eta \ast \mu_\varepsilon}$ either is the difference of two non-negative measures that are singular with respect to each other or equals a purely non-positive measure, depending on $(F_\eta \ast F_\varepsilon)\curbr{0}$. The first applies if $0 \leq (F_\eta \ast F_\varepsilon)\curbr{0} < 1$, in which case the Jordan decomposition yields $| \pi_{\eta \ast \mu_\varepsilon} |(\R) = 1 - (F_\eta \ast F_\varepsilon)\curbr{0} + (\eta \ast \mu_\varepsilon)(\R \setminus \curbr{0})$. On the other side, the measure is purely non-positive if $(F_\eta \ast F_\varepsilon)\curbr{0} \geq 1$, in which case $| \pi_{\eta \ast \mu_\varepsilon} |(\R) = | 1 - \eta(\R) |$. In particular, then $\eta(\R) \geq 1$, since $\eta(\R) = (\eta\ast \mu_\varepsilon)(\R) \geq (F_\eta \ast F_\varepsilon)\curbr{0} \geq 1$.
\end{proof}

With the aid of Lemma \ref{LemVarDegMeas}, we can finally establish a sufficient condition for the invertibility of $\BigS_{\eta \ast \mu_\varepsilon}$, given that $\eta$ is a non-negative measure.

\begin{theorem} \label{Theo20250820}
On $(\mathcal{M}\rb{\R, \mathcal{B}(\R)}, \snorm{\cdot}_{TV})$ and $(L^1(\R), \snorm{\cdot}_1)$, the operator $\BigS_{\eta \ast \mu_\varepsilon}$ is invertible, for any $\eta \in \mathcal{M}(\R, \mathcal{B}(\R))$, with $\eta \geq 0$ and
\begin{align*}
\eta(\R) < 2 \min \rrb{ (F_\eta \ast F_\varepsilon)\curbr{0}, 1}.
\end{align*}
\end{theorem}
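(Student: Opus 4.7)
The plan is to combine the three ingredients already at our disposal: the Neumann-series criterion (\ref{TheoInvOp}), the operator-norm identity in Lemma \ref{TheoTotVarNorm}, and the explicit total variation computation in Lemma \ref{LemVarDegMeas}. The relation (\ref{ConnBetwDiffDecFct4}) tells us that $\BigS_{\eta \ast \mu_\varepsilon} = \operatorname{Id}_V - \Tau_{\eta \ast \mu_\varepsilon}$ on either Banach space $V$, so it suffices to verify that $\snorm{\Tau_{\eta\ast\mu_\varepsilon}} < 1$ in both $(\mathcal{M}(\R,\mathcal{B}(\R)), \snorm{\cdot}_{TV})$ and $(L^1(\R), \snorm{\cdot}_1)$.

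For the measure space, Lemma \ref{TheoTotVarNorm} directly gives $\snorm{\Tau_{\eta \ast \mu_\varepsilon}} = \abs{\pi_{\eta\ast\mu_\varepsilon}}(\R)$, and for the $L^1$-space the usual Fubini estimate $\snorm{f \ast \pi_{\eta \ast \mu_\varepsilon}}_1 \leq \snorm{f}_1 \abs{\pi_{\eta\ast\mu_\varepsilon}}(\R)$ yields the analogous bound $\snorm{\Tau_{\eta \ast \mu_\varepsilon}} \leq \abs{\pi_{\eta\ast\mu_\varepsilon}}(\R)$. In either setting, invertibility of $\BigS_{\eta\ast\mu_\varepsilon}$ is therefore implied by the sharper inequality $\abs{\pi_{\eta\ast\mu_\varepsilon}}(\R) < 1$.

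To translate this inequality into a condition on $\eta$, I invoke Lemma \ref{LemVarDegMeas} and split into the two cases provided there. If $0 \leq (F_\eta \ast F_\varepsilon)\curbr{0} < 1$, the lemma gives $1 + \eta(\R) - 2(F_\eta \ast F_\varepsilon)\curbr{0} < 1$, which rearranges to $\eta(\R) < 2 (F_\eta \ast F_\varepsilon)\curbr{0}$. If instead $(F_\eta \ast F_\varepsilon)\curbr{0} \geq 1$, the lemma gives $\eta(\R) - 1 < 1$, i.e. $\eta(\R) < 2$. Since $\min\curbr{(F_\eta \ast F_\varepsilon)\curbr{0}, 1}$ equals $(F_\eta \ast F_\varepsilon)\curbr{0}$ in the first case and $1$ in the second, the two subcases unify precisely into the hypothesis $\eta(\R) < 2\min\curbr{(F_\eta \ast F_\varepsilon)\curbr{0}, 1}$ of the theorem.

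Essentially no genuine obstacle remains; the entire argument reduces to bookkeeping of the previous lemmas. The only point that deserves a brief verification is that the bound $\snorm{\Tau_{\eta\ast\mu_\varepsilon}} \leq \abs{\pi_{\eta\ast\mu_\varepsilon}}(\R)$ extends from the total variation norm to the $L^1$-norm, which follows from Fubini's theorem applied to $\abs{\pi_{\eta\ast\mu_\varepsilon}}$. The resulting inverse is given, as announced before the statement of the theorem, by $\BigS_{\eta\ast\mu_\varepsilon}^{-1} = \BigS_{\Pi\curbr{\eta}(\cdot, \infty)}$, with the Neumann sum $\Pi\curbr{\eta}(\cdot, m)$ converging in the ambient norm at geometric rate $\abs{\pi_{\eta\ast\mu_\varepsilon}}(\R) < 1$.
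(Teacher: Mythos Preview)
Your proposal is correct and follows essentially the same route as the paper: invoke the Neumann criterion (\ref{TheoInvOp}) via $\BigS_{\eta\ast\mu_\varepsilon} = \operatorname{Id}_V - \Tau_{\eta\ast\mu_\varepsilon}$, identify or bound $\snorm{\Tau_{\eta\ast\mu_\varepsilon}}$ by $\abs{\pi_{\eta\ast\mu_\varepsilon}}(\R)$ through Lemma \ref{TheoTotVarNorm} (respectively the Fubini estimate in $L^1$), and then read off the condition $\abs{\pi_{\eta\ast\mu_\varepsilon}}(\R)<1$ from Lemma \ref{LemVarDegMeas}. Your explicit case split of Lemma \ref{LemVarDegMeas} is more detailed than the paper, which simply states that the claim ``directly follows'' from the two lemmas, but the substance is identical.
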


\begin{proof}
The sufficiency of the given conditions in the space $(\mathcal{M}\rb{\R, \mathcal{B}(\R)}, \snorm{\cdot}_{TV})$ directly follows from (\ref{TheoInvOp}), in view of Lemmas \ref{TheoTotVarNorm} and \ref{LemVarDegMeas}. Furthermore, it is easy to confirm that $\snorm{\Tau_{\eta \ast \mu_\varepsilon}\curbr{q}}_1 \leq\snorm{q}_1\abs{\pi_{\eta \ast \mu_\varepsilon}}(\R)$, for all $q \in L^1(\R)$. Hence, in $(L^1(\R), \snorm{\cdot}_1)$, the operator norm satisfies $\snorm{\Tau_{\eta \ast \mu_\varepsilon}} \leq \abs{\pi_{\eta \ast \mu_\varepsilon}}(\R)$ and $\Tau_{\eta \ast \mu_\varepsilon} \in \mathcal{L}(L^1(\R))$. Furthermore, the condition (\ref{TheoInvOp}) is applicable and holds under the same conditions as in the first part.
\end{proof}

To tie in with our example from Corollary \ref{CorFinDecI}, we briefly consider the measure $\eta := \lambda \delta_{ \curbr{-z_0} }$, for $\lambda > 0$ and $z_0 \in \R$. In these circumstances, $\eta(\R) = \lambda$ and $(F_\eta \ast F_\varepsilon)\curbr{0} = \lambda F_\varepsilon\curbr{z_0}$. Hence, Theorem \ref{Theo20250820} holds, whenever
\begin{align*}
0 \leq \lambda < 2 \min\curbr{ \lambda F_\varepsilon\curbr{z_0}, 1} .
\end{align*}
This is certainly fulfilled by $\lambda := (F_\varepsilon\curbr{z_0})^{-1}$, if $F_\varepsilon\curbr{z_0}> \frac{1}{2}$. Finally, as is pointed out in the introduction to \cite[][$\S$11.5]{robinson2020introduction}, there consists a remarkable difference between an operator \textit{having an inverse} and \textit{being invertible}. Indeed, an operator can have an inverse, although it need not be invertible. The reason is that invertibility is a special property, which implies continuity and boundedness of the inverse operator. Therefore, the inapplicability of the condition (\ref{TheoInvOp}) merely suggests the unboundedness of the inverse operator of $\BigS_{ \eta \ast \mu_\varepsilon }$ on the respective Banach space. Nevertheless, as $m \rightarrow \infty$, the functions $\mathfrak{F}\curbr{\eta}(\cdot, m)$ and $\mathfrak{f}\curbr{\eta}(\cdot, m)$ from (\ref{ContigRelDecFct4}) and (\ref{ConnBetwDiffDecFct11}) may still converge, even in the considered spaces, depending on $F_\eta \ast F_Y$. In fact, the convergence behaviour of the Neumann partial sum $\Pi\curbr{\eta}(\cdot, m)$ may essentially change after additional convolution with $\eta \ast \mu_Y$.

\section{Conclusion and future work}

Altogether, in this text, we proposed various modifications for the initial convolution equations in the additive model of errors in variables. In some cases, these even gave rise to a finite representation of an inverse. However, our discussion is far from complete and leaves many open questions that will be subject of further research. The main question clearly is the behaviour of $\mathfrak{F}\curbr{ \eta }(\cdot, m)$, for a fixed pair of d.fs. $F_X$ and $F_\varepsilon$, under different choices of $\eta$, and the effect of the truncation index $m$. Due to the dominant appearance of convolution powers, in view of the product rule for c.fs. \cite[see][$\S$3.3]{Lukacs1970}, it is tempting to continue further studies in the Fourier domain. Since $\eta \ast \mu_Y \ast \Pi\curbr{\eta}(\cdot, m) \in \mathcal{M}(\C, \mathcal{B}(\R))$, the Fourier-Stieltjes transform $\Phi_{\mathfrak{F}\curbr{\eta}}(t,m) := \int_{-\infty}^\infty e^{itx} \mathfrak{F}\curbr{\eta}(dx,m)$ represents a uniformly continuous function of $t \in \R$, for each $m \in \N_0$. More precisely, according to (\ref{ContigRelDecFct4}), from $\Phi_{ \eta \ast \mu_Y } = \Phi_\eta\Phi_Y$ and $\Phi_{ \pi_{\eta \ast \mu_\varepsilon} } = 1 - \Phi_\eta\Phi_\varepsilon$, we get
\begin{align*}
\Phi_{\mathfrak{F}\curbr{\eta}}(t,m) &= \Phi_\eta(t)\Phi_Y(t) \suml_{\ell=0}^m \rrb{ 1 - \Phi_\eta(t)\Phi_\varepsilon(t) }^\ell \hspace{1cm} ( (t, m) \in \R \times \N_0).
\end{align*}
The right hand side is a geometric sum, $m$ being its truncation index. Therefore,
\begin{align*}
\Phi_{\mathfrak{F}\curbr{\eta}}(t,m) &=
\begin{cases}
\Phi_X(t) \rrb{ 1 - \rb{ 1 - \Phi_\eta(t) \Phi_\varepsilon(t) }^{m+1} }, & \mbox{if } \Phi_\eta(t)\Phi_\varepsilon(t) \neq 0, \\
0, & \mbox{else.}
 \end{cases}
\end{align*}
The obtained representation suggests that an escape to the Fourier domain may also simplify the actual evaluation of the deconvolution quantities, since, in addition to the convolution powers, we avoid the numerically instable binomial coefficient. Yet, there is a hidden pitfall. Considering $t \in \R$ with $\Phi_\eta(t)\Phi_\varepsilon(t) \neq 0$, it shows that $\Phi_{\mathfrak{F}\curbr{\eta}}(t,m)$, as $m \rightarrow \infty$, converges if and only if
\begin{align*}
\abs{ 1 - \Phi_\eta(t) \Phi_\varepsilon(t) } < 1,
\end{align*}
in which event the limit equals $\Phi_X$(t). Thereof, however, simply because $\mathfrak{F}\curbr{ \eta }(\cdot, m)$ is a complex d.f., we may not conclude weak convergence to $F_X$, e.g., as in the continuity theorem \cite[][$\S$3.6]{Lukacs1970}. In fact, apparently there is \textit{no} connection between convergence of $\mathfrak{F}\curbr{\eta}(\cdot, m)$ and of $\Phi_{\mathfrak{F}\curbr{\eta}}(\cdot, m)$. For instance, assume that $\Tbb_X = \N_0$ and $\varepsilon \sim \mbox{Poisson}(2)$. Then, on the one hand, Corollary \ref{CorFinDecI} states that $\lim_{m \rightarrow \infty} \mathfrak{F}\curbr{ \delta_{ \curbr{0} } }(\xi, m) = F_X(\xi)$, even for all $\xi \in \R$, not only at continuity points of $F_X$. On the other hand, $\Phi_{ \delta_{ \curbr{0} } }(t) \Phi_\varepsilon(t) = \Phi_\varepsilon(t) = \exp\curbr{ 2(\exp\curbr{it}-1) }$, with $|1 - \Phi_\varepsilon(0)|=0$ and $|1-\Phi_\varepsilon(5)| \approx 1,10$. Hence, by continuity and periodicity, as $m \rightarrow \infty$, the Fourier-Stieltjes transform $\Phi_{\mathfrak{F}\curbr{ \delta_{ \curbr{0} } }}(\cdot,m)$ converges but also diverges on infinitely many intervals.
\\
\hspace*{1em}Finally, we mention that convergence of $\Phi_{\mathfrak{F}\curbr{ \eta }}(\cdot,m)$ can be achieved in any setting, if we specify $\eta_\varepsilon(A) := \mu_\varepsilon(-A)$, for $A \in \mathcal{B}(\R)$, as the conjugate of the measure $\mu_\varepsilon$. This choice is useful for an approach that entirely relies on Fourier transforms. In fact, the transformed c.f. $\Phi_{\eta_\varepsilon}\Phi_\varepsilon = |\Phi_\varepsilon|^2$ then corresponds to a symmetric distribution. Example \ref{NormExamp} already suggests that symmetry is a beneficial property of errors. Details on this idea, such as convergence in the domain of d.fs. and advantages with regard to estimation of $F_X$, will be discussed in detail elsewhere \cite[see, e.g.,][]{Kaiser2025deconvolutionarbitrarydistributionfunctions}.

\backmatter

\section*{Declarations}

\bmhead{Funding}

The author gratefully acknowledges the support of the WISS 2025 project 'IDA Lab Salzburg' (Land Salzburg, 20102/F2300464-KZP, 20204-WISS/225/348/3-2023).

\begin{appendices}

\section{Convolution identities for complex measures} \label{AppConvIdMeas}

The fact that the convolution of complex measures corresponds to some kind of product, implies various useful identities, of which the following is of most frequent use in this text.

\begin{lemma}[binomial convolution] \label{LemDekmkompakt02}
Consider $j_0 \in \N$, complex measures $\mu_1, \mu_2 : \mathcal{B}(\R) \rightarrow \overline \R + i\overline \R$ and $A \in \mathcal{B}(\R)$, such that $|\mu_t|(A-x\Tbb_{\mu_1}-y\Tbb_{\mu_2}) < \infty$, for all $t \in \curbr{1, 2}$ and $x, y \geq 0$, with $x+y \leq j_0$. Then, for every $0 \leq j \leq j_0$, it holds that $|(\mu_1 + \mu_2)^{\ast j}|(A) < \infty$, with
\begin{align*}
(\mu_1 + \mu_2)^{\ast j}(A) = \suml_{k=0}^j \binom{j}{k} (\mu_1^{\ast (j-k)} \ast \mu_2^{\ast k})(A).
\end{align*}
\end{lemma}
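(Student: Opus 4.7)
My plan is to prove this by induction on $j$, treating convolution as a commutative and associative ``product'' on the class of complex measures, and then applying the standard Pascal-identity argument familiar from the binomial theorem in commutative rings. The main non-routine part is not the combinatorial algebra itself but the bookkeeping of the finite-total-variation conditions, which are needed to justify every intermediate convolution and every rearrangement.

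The base cases $j=0$ and $j=1$ are immediate from $\mu^{\ast 0} = \delta_{\curbr{0}}$ and the convention that empty sums vanish. For the inductive step, assume the identity holds for $j-1$ (where $1 \leq j \leq j_0$). The first task is to verify that under the hypothesis $|\mu_t|(A - x \Tbb_{\mu_1} - y\Tbb_{\mu_2}) < \infty$ for $x+y \leq j_0$, the recursion $(\mu_1+\mu_2)^{\ast j} = (\mu_1+\mu_2) \ast (\mu_1+\mu_2)^{\ast(j-1)}$ is valid and yields a measure of finite total variation on $A$. This follows from the criterion stated right before the lemma: the support of $\mu_1+\mu_2$ is contained in $\Tbb_{\mu_1} \cup \Tbb_{\mu_2}$, and finiteness of $|\mu_t|$ on the translates $A - r(\Tbb_{\mu_1} \cup \Tbb_{\mu_2})$, for $0 \leq r \leq j-1$, is implied by the standing assumption. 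Similarly, each term $\mu_1^{\ast(j-k)} \ast \mu_2^{\ast k}$ appearing on the right-hand side has finite total variation on $A$, because, by the definition of convolution powers via iterated integrals over $\Tbb_{\mu_1}^{j-k} \times \Tbb_{\mu_2}^k$, the finiteness assumption $|\mu_t|(A - (j-k)\Tbb_{\mu_1} - k\Tbb_{\mu_2}) < \infty$ is exactly what is needed to bound the iterated absolute integrals.

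Having secured well-definedness, the algebraic step is routine: using the induction hypothesis and bilinearity of convolution with respect to signed sums of measures,
\begin{align*}
(\mu_1+\mu_2)^{\ast j}
&= (\mu_1+\mu_2) \ast \suml_{k=0}^{j-1} \binom{j-1}{k} \mu_1^{\ast(j-1-k)} \ast \mu_2^{\ast k} \\
&= \suml_{k=0}^{j-1} \binom{j-1}{k} \mu_1^{\ast(j-k)} \ast \mu_2^{\ast k} + \suml_{k=0}^{j-1} \binom{j-1}{k} \mu_1^{\ast(j-1-k)} \ast \mu_2^{\ast(k+1)}.
\end{align*}
After reindexing the second sum $k \mapsto k-1$ and isolating the boundary terms $k=0$ and $k=j$, an application of Pascal's rule $\binom{j-1}{k} + \binom{j-1}{k-1} = \binom{j}{k}$ on the overlapping indices $1 \leq k \leq j-1$ produces the asserted identity.

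The main obstacle is not the combinatorics but justifying each manipulation as an identity of (possibly non-finite) complex measures: in particular, the bilinearity/distributivity step $(\mu_1+\mu_2) \ast \nu = \mu_1 \ast \nu + \mu_2 \ast \nu$ and the associativity rearrangement $\mu_1 \ast (\mu_1^{\ast(j-1-k)} \ast \mu_2^{\ast k}) = \mu_1^{\ast(j-k)} \ast \mu_2^{\ast k}$ both require interchanging iterated integrals, which is legitimate here by Fubini's theorem applied to the finite absolute variation $|\mu_t|$ on the relevant translates. The finiteness bookkeeping is where the stated hypothesis is used in full strength; once this is in place, the induction closes and gives both $|(\mu_1+\mu_2)^{\ast j}|(A) < \infty$ and the binomial expansion simultaneously.
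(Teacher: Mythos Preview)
Your proposal is correct and follows essentially the same route as the paper: induction on $j$, expansion via the recursion $(\mu_1+\mu_2)^{\ast j} = (\mu_1+\mu_2)\ast(\mu_1+\mu_2)^{\ast(j-1)}$, distributivity, a shift of index, and Pascal's rule. The only difference is emphasis: you spell out the finite-total-variation bookkeeping and the Fubini justification in more detail, whereas the paper dispatches this in a single sentence before carrying out the identical combinatorial manipulation.
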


If $A = \curbr{x}$ or $A = (-\infty, x]$, for $x \in \R$, we directly get convolution powers and a binomial theorem for convolutions of sequences or d.fs., respectively. The assumptions are clearly satisfied, whenever $\mu_1, \mu_2 \in \mathcal{M}(\C, \mathcal{B}(\R))$.

\begin{proof}
Under the given assumptions, the binomial sum on the right hand side of the asserted identity defines a complex measure of finite total variation on $A$, for each $0 \leq j \leq j_0$. It therefore suffices to verify the identity, for which we proceed by induction. The cases $j\in\curbr{0, 1}$ are trivial. Assuming that the given identity holds up to the index $j\leq j_0-1$, we next confirm its validity for $j+1$. Elementary manipulations show that
\begin{align*}
(\mu_1 + \mu_2)^{\ast (j+1)}(A) &= ((\mu_1 + \mu_2) \ast (\mu_1 + \mu_2)^{\ast j})(A) \\
&= \suml_{ k = 0 }^j \binom{ j }{ k } \rrb{ (\mu_1^{\ast (j+1-k)} \ast \mu_2^{\ast k})(A) + (\mu_1^{\ast (j-k)} \ast \mu_2^{\ast (k+1)})(A) } \\
&= \mu_1^{\ast (j+1)}(A) + \mu_2^{\ast (j+1)}(A) \\
&\hspace{1cm} + \suml_{ k = 0 }^{j - 1} \rrb{ \binom{ j }{ k } + \binom{ j }{ k + 1 } } (\mu_1^{\ast (j-k)} \ast \mu_2^{\ast (k+1)})(A).
\end{align*}
An application of Pascal's rule \cite[][(26.3.5)]{olver2010nist} eventually finishes the proof.
\end{proof}

Specifically $(\delta_{\curbr{0}} - \mu_2)^{\ast j}$, in view of Lemma \ref{LemDekmkompakt02}, represents the binomial transform of the sequence $( \mu_2^{\ast k} )_{k\in \N_0}$. Generally, the \textit{binomial transform} of a sequence $(p(\ell))_{\ell \in \N_0} \subset \C$ refers to the binomial sum
\begin{align*}
\Beta\curbr{p}(\ell) := \suml_{k=0}^\ell \binom{\ell}{k} (-1)^k p(k) \hspace{1cm} (\ell \in \Z),
\end{align*}
which generates the sequence $( \Beta\curbr{p}(\ell) )_{\ell \in \N_0}$. By repeated application, it turns out that $\Beta\curbr{\Beta\curbr{p}}(\ell) = p(\ell)$, for every $\ell \in \N_0$, i.e., the binomial transform is an involution \cite[see, e.g.,][]{Haukkanen1995}. Our next result highlights various remarkable properties of convolution powers of measures, whose mass is concentrated only on the positive integers. Roughly speaking, these can be written as a weighted sum over the set
\begin{align}
\mathcal{C}_{j, \ell} := \rrb{ (z_1, \hdots, z_j) \in \N^j : \sum_{i=1}^j z_i = \ell } \hspace{1cm} ((j, \ell) \in \N \times \Z),
\end{align}
which is known as the \textit{set of compositions of $\ell$ into $j$ parts} \cite[cf.][Definition 1.9]{Flajolet_Sedgewick_2009} (not to be confused with partitions). In \cite[Example 1.6]{Flajolet_Sedgewick_2009}, it has been shown that $|\mathcal{C}_{j, \ell}|=\binom{\ell-1}{j-1}$, for all $(\ell, j) \in \N^2$.

\begin{lemma} \label{LemDiscConvPwrs}
Let $\mu : \mathcal{B}(\R) \rightarrow \overline \R + i\overline \R$ be a complex measure, with $\Tbb_\mu = \N$ and $|\mu(\curbr{\ell})| < \infty$, for each $\ell \in \N$. Then, $\Tbb_{ \mu^{\ast j} } \subset \N$ and $|\mu^{\ast j}(\curbr{\ell})| < \infty$, for each $(j, \ell) \in \N^2$. In particular, $\mu^{\ast j}(\curbr{\ell}) = 0$, for all $\ell \leq j-1$, and
\begin{align} \label{ProdRepConvPwr}
\mu^{\ast j}( \curbr{ \ell } ) = \suml_{ \vec z \in \mathcal{C}_{j, \ell} } \prodl_{i=1}^j \mu( \curbr{z_i} ) \hspace{1cm} ((j, \ell) \in \N \times \Z),
\end{align}
where $\vec z := (z_1, \hdots, z_j)$. In addition, if there exists $K \in \N$, with $\mu(\curbr{\ell}) = 0$, for all $\ell > K$, then also $\mu^{\ast j}( \curbr{\ell} ) = 0$, for all $\ell > jK$.
\end{lemma}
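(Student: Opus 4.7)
My plan is to establish all claims simultaneously by induction on $j \in \N$. For the base case $j = 1$, we have $\mu^{\ast 1} = \mu$, with support $\Tbb_\mu = \N$, finite atoms by assumption, and the singleton $\mathcal{C}_{1, \ell} = \curbr{ (\ell) }$ for $\ell \in \N$ (empty otherwise), so that (\ref{ProdRepConvPwr}) reduces to a tautology, and the cancellation $\mu^{\ast 1}(\curbr{\ell}) = 0$ for $\ell \leq 0$ is the support hypothesis on $\mu$.

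For the inductive step, suppose the assertions hold at index $j - 1$. First I verify that the recursion $\mu^{\ast j} = \mu \ast \mu^{\ast (j-1)}$ applies in the sense of Section~\ref{SecNotNPrel}, by noting that for every $r \in \curbr{0, \hdots, j-1}$, the set $\curbr{\ell} - r\Tbb_\mu$ intersects $\Tbb_\mu \subset \N$ in at most finitely many points (for $r \geq 1$ one needs $\ell - rn \in \N$, forcing $n \leq \lfloor (\ell-1)/r \rfloor$), each of finite $|\mu|$-mass. This permits the evaluation
\begin{align*}
\mu^{\ast j}(\curbr{\ell}) = \suml_{ z_1 \in \N } \mu(\curbr{z_1}) \mu^{\ast (j-1)}( \curbr{ \ell - z_1 } ).
\end{align*}
By the inductive cancellation, $\mu^{\ast (j-1)}(\curbr{\ell - z_1}) = 0$ whenever $\ell - z_1 \leq j - 2$, so only $1 \leq z_1 \leq \ell - j + 1$ contributes. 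For $\ell \leq j - 1$ this range is empty, yielding $\mu^{\ast j}(\curbr{\ell}) = 0$. Otherwise, inserting the inductive representation produces
\begin{align*}
\mu^{\ast j}(\curbr{\ell}) = \suml_{z_1 = 1}^{\ell - j + 1} \mu(\curbr{z_1}) \suml_{ (z_2, \hdots, z_j) \in \mathcal{C}_{j-1, \ell - z_1} } \prodl_{i=2}^j \mu(\curbr{z_i}),
\end{align*}
and recognising $\mathcal{C}_{j, \ell}$ as the disjoint union of the sets $\curbr{z_1} \times \mathcal{C}_{j-1, \ell - z_1}$ over $z_1 \in \curbr{1, \hdots, \ell - j + 1}$ gives (\ref{ProdRepConvPwr}).

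The remaining assertions follow directly. Finiteness $|\mu^{\ast j}(\curbr{\ell})| < \infty$ is clear since $|\mathcal{C}_{j, \ell}| = \binom{\ell-1}{j-1}$ is finite and each factor in (\ref{ProdRepConvPwr}) is finite. The containment $\Tbb_{\mu^{\ast j}} \subset \N$ is immediate, as the atoms of $\mu^{\ast j}$ are confined to $\curbr{j, j+1, \hdots}$. For the upper bound, if $\mu(\curbr{\ell}) = 0$ for all $\ell > K$ and $\ell > jK$, then every $\vec z \in \mathcal{C}_{j, \ell}$ satisfies $\sum_{i=1}^j z_i > jK$, so by pigeonhole at least one coordinate exceeds $K$, whence every product in (\ref{ProdRepConvPwr}) vanishes.

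The only mildly delicate step I anticipate is the justification of the recursion $\mu^{\ast j} = \mu \ast \mu^{\ast (j-1)}$ at single-atom sets, which reduces to the finiteness condition recalled in Section~\ref{SecNotNPrel}; but this is automatic because $\Tbb_\mu \subset \N$ confines the relevant shifted sets to finitely many lattice points. Everything else is elementary bookkeeping on compositions.
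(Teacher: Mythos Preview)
Your proof is correct and follows essentially the same inductive route as the paper: both arguments verify the base case, invoke the recursion $\mu^{\ast j} = \mu \ast \mu^{\ast(j-1)}$ on singletons, use the cancellation $\mu^{\ast(j-1)}(\curbr{\ell-z_1}) = 0$ for $\ell - z_1 \leq j-2$ to reduce to a finite sum, and then identify the resulting double sum with the sum over $\mathcal{C}_{j,\ell}$ via the decomposition $\mathcal{C}_{j,\ell} = \bigcup_{z_1} \curbr{z_1} \times \mathcal{C}_{j-1,\ell-z_1}$. Your version is slightly more explicit in two places---you justify the applicability of the recursion by checking the finiteness hypothesis of Section~\ref{SecNotNPrel}, and you spell out the pigeonhole argument for the upper cut-off $\ell > jK$---whereas the paper leaves both of these as immediate.
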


\begin{proof}
By assumption, $\mu = \sum_{z=1}^\infty \mu(\curbr{z}) \delta_{\curbr{z}}$. It shows that $\mu(\curbr{\ell})=0$, for $\ell \leq 0$, and that (\ref{ProdRepConvPwr}) holds for $j=1$. We proceed by induction, supposing that the first two asserted properties hold up to an index $j$. Then, from the recursion for convolution powers, we get $\mu^{\ast (j+1)}(\curbr{\ell}) = \sum_{z=1}^{\ell-j} \mu^{\ast j}(\curbr{\ell-z}) \mu(\curbr{z})$. Hence, $\mu^{\ast (j+1)}(\curbr{\ell}) = 0$, whenever $\ell \leq j$. Furthermore, an application of (\ref{ProdRepConvPwr}) yields
\begin{align*}
\mu^{\ast (j+1)}(\curbr{\ell}) = \suml_{z=j}^{\ell-1} \mu(\curbr{\ell-z}) \suml_{ \vec z \in \mathcal{C}_{j, z} } \prodl_{i=1}^j \mu( \curbr{z_i} ) = \suml_{ \vec z \in \mathcal{C}_{j+1, \ell} } \prodl_{i=1}^{j+1} \mu( \curbr{z_i} ).
\end{align*}
It remains to verify the third property. This, however, is obvious from (\ref{ProdRepConvPwr}), since at least one factor of each summand equals zero, if $\ell > jK$.
\end{proof}

Our final result essentially relies on the binomial convolution theorem. Its series analogue for $q, q_0 \in \C$, with $|1-q|< 1$, $|q_0-q| < |q_0|$ and $q_0 \neq 0$, is the identity $\sum_{\ell=0}^\infty (1-q)^\ell = q^{-1} = q_0^{-1} \sum_{\ell=0}^\infty (1-q_0^{-1} q)^\ell$, corresponding to the geometric series.

\begin{lemma}[geometric convolution contiguity] \label{LemConvCont20250902}
Assume that the complex measure $\nu : \mathcal{B}(\R) \rightarrow \overline \R + i\overline \R$, for $(m, B) \in \N_0 \times \mathcal{B}(\R)$, satisfies $|\nu^{\ast \ell}|(B) < \infty$, for all $0 \leq \ell \leq m$. Then, defining $\mu := \delta_{\curbr{0}} - \nu_0^{-1} \nu$, for $0 < \nu_0 \leq 1$, it holds that
\begin{align*}
\suml_{\ell=0}^m (\delta_{\curbr{0}} - \nu)^{\ast \ell}(B) = \suml_{\ell = 0}^m a_{m, \ell} \mu^{\ast \ell}(B),
\end{align*}
where $0 \leq a_{m, \ell} \leq \nu_0^{-1}$, uniformly with respect to $(m, \ell) \in \N_0^2$, and $\lim_{m \rightarrow \infty} a_{m, \ell} =  \nu_0^{-1}$, for each $\ell \in \N_0$.
\end{lemma}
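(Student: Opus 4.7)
The plan rests on the elementary algebraic identity
\begin{align*}
\delta_{\curbr{0}} - \nu = (1-\nu_0)\delta_{\curbr{0}} + \nu_0 \mu,
\end{align*}
which is immediate from the definition $\mu = \delta_{\curbr{0}} - \nu_0^{-1}\nu$. With this rewriting in hand, I would apply the binomial convolution theorem (Lemma \ref{LemDekmkompakt02}) to the two measures $(1-\nu_0)\delta_{\curbr{0}}$ and $\nu_0 \mu$, exploiting $\delta_{\curbr{0}}^{\ast (\ell-k)} = \delta_{\curbr{0}}$, to obtain
\begin{align*}
(\delta_{\curbr{0}} - \nu)^{\ast \ell}(B) = \suml_{k=0}^\ell \binom{\ell}{k} (1-\nu_0)^{\ell-k} \nu_0^k \mu^{\ast k}(B) \hspace{1cm} (0 \leq \ell \leq m).
\end{align*}
Summing over $0 \leq \ell \leq m$ and interchanging the order of summation then yields the claimed representation with
\begin{align*}
a_{m, \ell} := \nu_0^\ell \suml_{j=\ell}^m \binom{j}{\ell} (1-\nu_0)^{j-\ell} \hspace{1cm} ((m, \ell) \in \N_0^2, ~ \ell \leq m),
\end{align*}
and $a_{m, \ell} := 0$ otherwise.

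The remaining work is to verify the stated bounds and the limit. Since $0 < \nu_0 \leq 1$, each summand defining $a_{m, \ell}$ is non-negative, so $a_{m, \ell} \geq 0$ and the sequence $(a_{m, \ell})_{m \geq \ell}$ is non-decreasing in $m$. Its limit as $m \rightarrow \infty$ I would evaluate via the generalized geometric series
\begin{align*}
\suml_{j=\ell}^\infty \binom{j}{\ell} x^{j-\ell} = (1-x)^{-(\ell+1)} \hspace{1cm} (|x|<1),
\end{align*}
applied to $x = 1-\nu_0 \in [0, 1)$, which gives $\liml_{m \rightarrow \infty} a_{m, \ell} = \nu_0^\ell \cdot \nu_0^{-(\ell+1)} = \nu_0^{-1}$. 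Monotone convergence then also delivers the uniform bound $a_{m, \ell} \leq \nu_0^{-1}$. The degenerate boundary case $\nu_0 = 1$ is harmless, as then the inner sum collapses to $\binom{\ell}{\ell} = 1$ and $a_{m, \ell} = 1 = \nu_0^{-1}$ for all $m \geq \ell$.

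The only non-routine issue is the applicability of the binomial convolution theorem, which requires the pertinent finite total variation hypotheses on the summands. This is where I anticipate the main, though modest, obstacle: one must note that the support of $\delta_{\curbr{0}}$ is $\curbr{0}$, so the relevant Minkowski shifts reduce to shifts by the support of $\nu_0 \mu$, which in turn is the support of $\nu$; hence the assumption $|\nu^{\ast \ell}|(B) < \infty$ for $0 \leq \ell \leq m$ propagates to the needed finiteness for the terms $((1-\nu_0)\delta_{\curbr{0}})^{\ast (\ell-k)} \ast (\nu_0 \mu)^{\ast k}$ on $B$, and also implies $|\mu^{\ast k}|(B) < \infty$ for $0 \leq k \leq m$ by another binomial expansion. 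Once this bookkeeping is done, the remainder is just the closed-form evaluation above.
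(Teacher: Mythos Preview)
Your proof is correct and in fact more direct than the paper's. Both arguments reach the very same closed form
\[
a_{m,\ell} = \nu_0^\ell \suml_{j=\ell}^m \binom{j}{\ell}(1-\nu_0)^{j-\ell} = \frac{\nu_0^\ell}{\ell!}\suml_{n=0}^{m-\ell}\frac{(n+\ell)!}{n!}(1-\nu_0)^n,
\]
but by different paths. The paper first applies the binomial convolution theorem to $(\delta_{\curbr{0}}-\nu)^{\ast\ell}$ in terms of $\nu^{\ast k}$, then applies it a second time to rewrite $\nu^{\ast k}=\nu_0^k(\delta_{\curbr{0}}-\mu)^{\ast k}$ in terms of $\mu^{\ast t}$, obtaining a double sum $a_{m,t}=\sum_{\ell=t}^m\sum_{k=t}^\ell\binom{\ell}{k}\binom{k}{t}(-1)^{k-t}\nu_0^k$ that must then be collapsed via the binomial theorem; the limit is subsequently evaluated by the integral representation $(n+t)!=\int_0^\infty z^{n+t}e^{-z}dz$ and monotone convergence. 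Your decomposition $\delta_{\curbr{0}}-\nu=(1-\nu_0)\delta_{\curbr{0}}+\nu_0\mu$ bypasses the intermediate expansion entirely, yielding the single-sum formula for $a_{m,\ell}$ after one binomial step, and the negative binomial series $\sum_{j\geq\ell}\binom{j}{\ell}x^{j-\ell}=(1-x)^{-(\ell+1)}$ replaces the gamma-integral detour. Your route is shorter and more elementary; the paper's route has the minor advantage that the monotone-convergence argument via the integral makes the uniform bound and the limit appear simultaneously without appealing to the closed form of the series, but this is a matter of taste. Your handling of the finite-total-variation hypotheses for Lemma~\ref{LemDekmkompakt02} is also at least as careful as the paper's, which does not comment on them.
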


\begin{proof}
By virtue of the binomial convolution theorem, i.e., Lemma \ref{LemDekmkompakt02}, we first expand $(\delta_{ \curbr{0} } - \nu)^{\ast \ell}$ and then observe that also $\nu^{ \ast k} = \nu_0^k \sum_{ t = 0 }^k \binom{ k }{ t }(-1)^t \mu^{\ast t}$. Thereby, after additional manipulations, for every $m \in \N_0$, we obtain $\sum_{\ell = 0}^m (\delta_{\curbr{0}} - \nu)^{\ast \ell}(B) = \sum_{t=0}^m a_{m, t} \mu^{\ast t}(B)$, in terms of
\begin{align*}
a_{m, t} := \suml_{\ell=t}^m \suml_{k=t}^\ell \binom{\ell}{k} \binom{k}{t} (-1)^{k-t} \nu_0^k.
\end{align*}
Equivalently, according to the binomial theorem,
\begin{align*}
a_{m, t} = \frac{ \nu_0^t }{ t! } \suml_{n=0}^{m-t} \frac{(n+t)!}{n!}(1- \nu_0)^n \hspace{1cm} (0 \leq t \leq m).
\end{align*}
Note that $(n+t)! = \Gamma(n+t+1)$, where $\Gamma$ refers to the well-known gamma function, i.e., $(n+t)! = \int_0^\infty z^{n+t} e^{-z} dz$. With the aid of this integral representation, since $0 \leq 1 - \nu_0 < 1$, appealing to monotone convergence, we compute
\begin{align*}
\liml_{m \rightarrow \infty} a_{m, t} = \frac{ \nu_0^t }{t!} \intl_0^\infty x^t e^{-\nu_0 x} dx = \nu_0^{-1} \hspace{1cm} (t \in \N_0).
\end{align*}
In particular, $0 \leq a_{m, t} \leq (t!)^{-1} \nu_0^t \int_0^\infty z^t e^{-\nu_0z} dz = \nu_0^{-1}$, uniformly with respect to $(m, t) \in \N_0^2$. The proof is hence completed.
\end{proof}

\section{Convergence of complex measures}

Convergence of finite measures is known as weak convergence. Various criteria to verify this kind of convergence are provided by the well-known Portmanteau theorem. Unfortunately, these can not directly be transferred to complex measures. For that reason, we give a short convergence test specifically for complex measures, which basically generalizes the Helly-Bray theorem in the version of Ch. 1, Theorem 16.4 in \cite{widder1946}.

\begin{lemma} \label{Lem2025102701}
Let $A \in \mathcal{B}(\R)$ and let $\mu_m, \mu : \mathcal{B}(\R) \rightarrow \overline \R + i \overline \R$ be complex measures, for $m \in \N_0$, with $\lim_{m \rightarrow \infty} \mu_m(E) = \mu(E)$, for each $E \subseteq A$, and $\sup_{m \in \N} |\mu_m|(A) < \infty$. Then, $|\mu|(A) < \infty$ and, for every measurable $f : A \rightarrow \C$, we have
\begin{align*}
\liml_{m \rightarrow \infty} \intl_A f(z) \mu_m(dz) = \intl_A f(z) \mu(dz),
\end{align*}
whenever $\snorm{g\ONE_A}_\infty < \infty$, for $g := f \ONE_{\Tbb_\mu \cup \bigcup_{m = 0}^\infty \Tbb_{\mu_m} }$.
\end{lemma}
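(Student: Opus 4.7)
The plan is to mimic the classical Helly--Bray scheme, but adapted to complex measures: first show finiteness of $|\mu|(A)$, then verify the integral convergence on indicator functions (by hypothesis), extend by linearity to simple functions, and finally promote to bounded measurable integrands through uniform approximation, using the assumed uniform bound $C := \sup_{m \in \N} |\mu_m|(A) < \infty$ to control the resulting error.

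First I would establish $|\mu|(A) < \infty$. For any finite disjoint collection $A_1, \ldots, A_K \in \mathcal{B}(\R)$ with $\bigcup_{k=1}^K A_k \subseteq A$, the pointwise convergence $\mu_m(A_k) \to \mu(A_k)$ (which holds because each $A_k$ is a subset of $A$) and continuity of $|\cdot|$ on $\C$ yield $\sum_{k=1}^K |\mu(A_k)| = \lim_{m \to \infty} \sum_{k=1}^K |\mu_m(A_k)| \leq C$. Taking the supremum over all admissible partitions gives $|\mu|(A) \leq C < \infty$, so $\mu$ restricted to measurable subsets of $A$ is a finite complex measure.

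Next, I would reduce to the case where the integrand is bounded on $A$. Writing $S := \Tbb_\mu \cup \bigcup_{m \in \N_0} \Tbb_{\mu_m}$, by definition of support $\mu(A \setminus S) = 0$ and $\mu_m(A \setminus S) = 0$ for every $m$; hence $\int_A f \, d\nu = \int_A f \ONE_S \, d\nu = \int_A g \, d\nu$ for $\nu \in \{\mu, \mu_m\}$. The function $g : A \to \C$ is measurable and by hypothesis satisfies $M := \snorm{g \ONE_A}_\infty < \infty$, so it suffices to prove $\int_A g \, d\mu_m \to \int_A g \, d\mu$. Splitting $g$ into real and imaginary parts, standard measure theory provides, for each $\epsilon > 0$, a simple function $s = \sum_{k=1}^K c_k \ONE_{E_k}$ with $E_1, \ldots, E_K \subseteq A$ disjoint and measurable and $\snorm{(g-s)\ONE_A}_\infty < \epsilon$.

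Finally, I would run the standard three-term estimate
\[
\abs[\bigg]{\intl_A g \, d\mu_m - \intl_A g \, d\mu} \leq \abs[\bigg]{\intl_A (g-s) \, d\mu_m} + \abs[\bigg]{\intl_A s \, d\mu_m - \intl_A s \, d\mu} + \abs[\bigg]{\intl_A (s-g) \, d\mu},
\]
where the two outer terms are bounded by $\epsilon \cdot C$ and $\epsilon \cdot |\mu|(A)$, respectively, and the middle term equals $\sum_{k=1}^K c_k (\mu_m(E_k) - \mu(E_k))$, which tends to zero as $m \to \infty$ by the hypothesis applied to each $E_k \subseteq A$. Hence $\limsup_{m \to \infty} |\int_A g \, d\mu_m - \int_A g \, d\mu| \leq \epsilon (C + |\mu|(A))$, and letting $\epsilon \downarrow 0$ concludes the proof. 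The main technical care will go into the first step (using pointwise convergence for each fixed partition before taking the partition supremum) and in the uniform-approximation step (ensuring that a bounded measurable complex function is uniformly approximable by simple functions on its whole domain, which follows from the real-valued case applied separately to real and imaginary parts).
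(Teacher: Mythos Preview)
Your proposal is correct and follows essentially the same approach as the paper's proof: bound $|\mu|(A)$ via the partition sup using pointwise convergence on each piece, replace $f$ by $g$ using that all the measures vanish off $S$, approximate the bounded function $g$ uniformly by a simple function, and control the difference via the uniform total-variation bound. The paper's write-up is more compressed (it collapses your three-term estimate into two terms and cites \cite[Theorem 2.89]{axler2019measure} for the simple-function approximation), but the logical skeleton is identical.
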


\begin{proof}
In the sequel, let $K\in \N$ and $(A_k)_{1 \leq k \leq K} \subseteq A$ be arbitrary. First of all, $\sum_{k=1}^K |\mu(A_k)| = \lim \inf_{m \rightarrow \infty} \sum_{k=1}^K |\mu_m(A_k)| \leq \sup_{m \in \N} |\mu_m|(A)$, from which the finite total variation of $\mu$ on $A$ follows. Furthermore, for any simple function $s(z) := \sum_{k=1}^K s_k \ONE_{ A_k }(z)$, with $s_k \neq 0$, the convergence $\int_A s(z) \mu_m(dz) \rightarrow \int_A s(z) \mu(dz)$ is a direct consequence of our assumptions. Finally, by construction,
\begin{align*}
\intl_A f(z) (\mu_m-\mu)(dz) = \intl_A g(z) (\mu_m-\mu)(dz).
\end{align*}
In this, $g$ is measurable and bounded on $A$, from which we conclude \cite[cf.][Theorem 2.89]{axler2019measure} to each $\delta > 0$, the existence of a simple function $s(z)$, with $\snorm{g-s}_\infty < \delta$. Hence,
\begin{align*}
\abs{ \intl_A f(z) (\mu_m-\mu)(dz) } \leq \delta (|\mu_m|(A)+|\mu|(A)) + \abs{ \intl_A s(z) (\mu_m-\mu)(dz) }.
\end{align*}
The first summand can be made arbitrarily small, whereas the second vanishes, as $m \rightarrow \infty$. The proof is thus completed.
\end{proof}

\end{appendices}

\bibliography{References}

\begin{thebibliography}{10}
\expandafter\ifx\csname url\endcsname\relax
  \def\url#1{\burl{#1}}\fi
\expandafter\ifx\csname urlprefix\endcsname\relax\def\urlprefix{URL }\fi
\providecommand{\bibinfo}[2]{#2}
\providecommand{\eprint}[2][]{\url{#2}}
\providecommand{\doi}[1]{\url{https://doi.org/#1}}
\bibcommenthead

\bibitem{Sarkar2014}
\bibinfo{author}{Sarkar, A.}, \bibinfo{author}{Mallick, B.~K.},
  \bibinfo{author}{Staudenmayer, J.}, \bibinfo{author}{Pati, D.} \&
  \bibinfo{author}{Carroll, R.}
\newblock \bibinfo{title}{Bayesian semiparametric density deconvolution in the
  presence of conditionally heteroscedastic measurement errors}.
\newblock \emph{\bibinfo{journal}{Journal of Computational and Graphical
  Statistics}} \textbf{\bibinfo{volume}{23}}, \bibinfo{pages}{1101--1125}
  (\bibinfo{year}{2014}).

\bibitem{WrightEtAl2014}
\bibinfo{author}{Wright, G.}, \bibinfo{author}{Dodd, L.} \&
  \bibinfo{author}{Korn, E.}
\newblock \bibinfo{title}{Inherent difficulties in nonparametric estimation of
  the cumulative distribution function using observations measured with error:
  application to high-dimensional microarray data}.
\newblock \emph{\bibinfo{journal}{Stat. Interface}}
  \textbf{\bibinfo{volume}{7}}, \bibinfo{pages}{69--73} (\bibinfo{year}{2014}).

\bibitem{Bonhomme2010}
\bibinfo{author}{Bonhomme, S.} \& \bibinfo{author}{Robin, J.-M.}
\newblock \bibinfo{title}{Generalized non-parametric deconvolution with an
  application to earnings dynamics}.
\newblock \emph{\bibinfo{journal}{The Review of Economic Studies}}
  \textbf{\bibinfo{volume}{77}}, \bibinfo{pages}{491--533}
  (\bibinfo{year}{2010}).

\bibitem{GonzalezEtAl2016}
\bibinfo{author}{Gonzalez, S.~A.}, \bibinfo{author}{Valentinuzzi, M.~E.} \&
  \bibinfo{author}{Arini, P.~D.}
\newblock \bibinfo{title}{Deconvolution: It fans back, out, and ahead
  [retrospectroscope]}.
\newblock \emph{\bibinfo{journal}{IEEE Pulse}} \textbf{\bibinfo{volume}{7}},
  \bibinfo{pages}{54--61} (\bibinfo{year}{2016}).

\bibitem{parzen1962}
\bibinfo{author}{Parzen, E.}
\newblock \bibinfo{title}{On {E}stimation of a {P}robability {D}ensity
  {F}unction and {M}ode}.
\newblock \emph{\bibinfo{journal}{Ann. Math. Statist.}}
  \textbf{\bibinfo{volume}{33}}, \bibinfo{pages}{1065--1076}
  (\bibinfo{year}{1962}).

\bibitem{Rosenblatt1956}
\bibinfo{author}{Rosenblatt, M.}
\newblock \bibinfo{title}{{R}emarks on {S}ome {N}onparametric {E}stimates of a
  {D}ensity {F}unction}.
\newblock \emph{\bibinfo{journal}{The Annals of Mathematical Statistics}}
  \textbf{\bibinfo{volume}{27}}, \bibinfo{pages}{832--837}
  (\bibinfo{year}{1956}).

\bibitem{titchmarsh1937}
\bibinfo{author}{Titchmarsh, E.}
\newblock \emph{\bibinfo{title}{Introduction to the Theory of Fourier
  Integrals}}  (\bibinfo{publisher}{Oxford Clarendon Press},
  \bibinfo{year}{1937}).

\bibitem{widder1946}
\bibinfo{author}{Widder, D.}
\newblock \emph{\bibinfo{title}{The Laplace Transform}}
  (\bibinfo{publisher}{Princeton University Press}, \bibinfo{year}{1946}).

\bibitem{Dominguez2015}
\bibinfo{author}{Domínguez, A.}
\newblock \bibinfo{title}{A history of the convolution operation
  [retrospectroscope]}.
\newblock \emph{\bibinfo{journal}{IEEE Pulse}} \textbf{\bibinfo{volume}{6}},
  \bibinfo{pages}{38--49} (\bibinfo{year}{2015}).

\bibitem{Meister2009}
\bibinfo{author}{Meister, A.}
\newblock \emph{\bibinfo{title}{Deconvolution Problems in Nonparametric
  Statistics}} \bibinfo{edition}{1.} edn, Vol. \bibinfo{volume}{193} of
  \emph{\bibinfo{series}{Lecture Notes in Statistics}}
  (\bibinfo{publisher}{Springer-Verlag Berlin Heidelberg},
  \bibinfo{year}{2009}).

\bibitem{Yietal2021}
\bibinfo{editor}{Yi, G.}, \bibinfo{editor}{Delaigle, A.} \&
  \bibinfo{editor}{Gustafson, P.} (eds) \emph{\bibinfo{title}{Handbook of
  measurement error models}} Chapman Hall/CRC Handb. Mod. Stat. Methods
  (\bibinfo{publisher}{Boca Raton, FL: CRC Press}, \bibinfo{year}{2021}).

\bibitem{tricomi1985integral}
\bibinfo{author}{Tricomi, F.}
\newblock \emph{\bibinfo{title}{Integral Equations}} (Pure and Applied
  Mathematics, v. 5) (\bibinfo{publisher}{Dover Publications},
  \bibinfo{year}{1985}).

\bibitem{HirschmanWidder1955}
\bibinfo{author}{Hirschman, I.} \& \bibinfo{author}{Widder, D.}
\newblock \emph{\bibinfo{title}{The Convolution Transform}}
  (\bibinfo{publisher}{Princeton University Press}, \bibinfo{year}{1955}).

\bibitem{Hadamard1902}
\bibinfo{author}{Hadamard, J.}
\newblock \bibinfo{title}{Sur les dérivées des fonctions de lignes}.
\newblock \emph{\bibinfo{journal}{Bulletin de la Société Mathématique de
  France}} \textbf{\bibinfo{volume}{30}}, \bibinfo{pages}{40--43}
  (\bibinfo{year}{1902}).

\bibitem{StefCarr1988}
\bibinfo{author}{Stefanski, L.} \& \bibinfo{author}{Carroll, R.}
\newblock \bibinfo{title}{{D}econvoluting {K}ernel {D}ensity {E}stimators}.
\newblock \emph{\bibinfo{journal}{Statistics}} \textbf{\bibinfo{volume}{21}},
  \bibinfo{pages}{169--184} (\bibinfo{year}{1990}).

\bibitem{Fan1991a}
\bibinfo{author}{Fan, J.}
\newblock \bibinfo{title}{{A}symptotic {N}ormality for {D}econvolution {K}ernel
  {D}ensity {E}stimators}.
\newblock \emph{\bibinfo{journal}{Sankhyā: The Indian Journal of Statistics,
  Series A (1961-2002)}} \textbf{\bibinfo{volume}{53}},
  \bibinfo{pages}{97--110} (\bibinfo{year}{1991}).

\bibitem{Fan1991b}
\bibinfo{author}{Fan, J.}
\newblock \bibinfo{title}{{On the Optimal Rates of Convergence for
  Nonparametric Deconvolution Problems}}.
\newblock \emph{\bibinfo{journal}{The Annals of Statistics}}
  \textbf{\bibinfo{volume}{19}}, \bibinfo{pages}{1257 -- 1272}
  (\bibinfo{year}{1991}).

\bibitem{Adusumilli2020}
\bibinfo{author}{Adusumilli, K.}, \bibinfo{author}{Kurisu, D.},
  \bibinfo{author}{Otsu, T.} \& \bibinfo{author}{Whang, Y.}
\newblock \bibinfo{title}{{I}nference on {D}istribution {F}unctions under
  {M}easurement {E}rror}.
\newblock \emph{\bibinfo{journal}{J. Econom.}} \textbf{\bibinfo{volume}{215}},
  \bibinfo{pages}{131--164} (\bibinfo{year}{2020}).

\bibitem{Thuy2023}
\bibinfo{author}{Thuy, L.} \& \bibinfo{author}{Phuong, C.}
\newblock \bibinfo{title}{{D}econvolution {P}roblem of {C}umulative
  {D}istribution {F}unction with {H}eteroscedastic {E}rrors}.
\newblock \emph{\bibinfo{journal}{J. Korean Stat. Soc.}}
  \textbf{\bibinfo{volume}{52}}, \bibinfo{pages}{330--360}
  (\bibinfo{year}{2023}).

\bibitem{Zhu2022}
\bibinfo{author}{Zhu, W.}, \bibinfo{author}{Sun, S.}, \bibinfo{author}{Wang,
  P.} \& \bibinfo{author}{Khakurel, H.}
\newblock \bibinfo{title}{{A}symptotic {N}ormality of the {D}econvolution
  {K}ernel {D}ensity {E}stimator {B}ased on {I}ndependent {R}ight {C}ensored
  {D}ata}.
\newblock \emph{\bibinfo{journal}{Matematica}} \textbf{\bibinfo{volume}{1}},
  \bibinfo{pages}{531--550} (\bibinfo{year}{2022}).

\bibitem{DattnerEtAl2011}
\bibinfo{author}{Dattner, I.}, \bibinfo{author}{Goldenshluger, A.} \&
  \bibinfo{author}{Juditsky, A.}
\newblock \bibinfo{title}{On deconvolution of distribution functions}.
\newblock \emph{\bibinfo{journal}{Ann. Stat.}} \textbf{\bibinfo{volume}{39}},
  \bibinfo{pages}{2477--2501} (\bibinfo{year}{2011}).

\bibitem{Hesse2006}
\bibinfo{author}{Hesse, C.}
\newblock \bibinfo{title}{Iterative density estimation from contaminated
  observations}.
\newblock \emph{\bibinfo{journal}{Metrika}} \textbf{\bibinfo{volume}{64}},
  \bibinfo{pages}{151--165} (\bibinfo{year}{2006}).

\bibitem{BelomGolden2020a}
\bibinfo{author}{Belomestny, D.} \& \bibinfo{author}{Goldenshluger, A.}
\newblock \bibinfo{title}{{{D}ensity {D}econvolution under {G}eneral
  {A}ssumptions on the {D}istribution of {M}easurement {E}rrors}}.
\newblock \emph{\bibinfo{journal}{The Annals of Statistics}}
  \textbf{\bibinfo{volume}{49}}, \bibinfo{pages}{615 -- 649}
  (\bibinfo{year}{2021}).

\bibitem{GoldenshlugerKim2021}
\bibinfo{author}{Goldenshluger, A.} \& \bibinfo{author}{Kim, T.}
\newblock \bibinfo{title}{{{D}ensity {D}econvolution with {N}on–{S}tandard
  {E}rror {D}istributions: {R}ates of {C}onvergence and {A}daptive
  {E}stimation}}.
\newblock \emph{\bibinfo{journal}{Electronic Journal of Statistics}}
  \textbf{\bibinfo{volume}{15}}, \bibinfo{pages}{3394 -- 3427}
  (\bibinfo{year}{2021}).

\bibitem{Hall2007}
\bibinfo{author}{Hall, P.} \& \bibinfo{author}{Meister, A.}
\newblock \bibinfo{title}{{A} {R}idge-{P}arameter {A}pproach to
  {D}econvolution}.
\newblock \emph{\bibinfo{journal}{Ann. Stat.}} \textbf{\bibinfo{volume}{35}},
  \bibinfo{pages}{1535--1558} (\bibinfo{year}{2007}).

\bibitem{Cao2023}
\bibinfo{author}{Cao, K.} \& \bibinfo{author}{Zeng, X.}
\newblock \bibinfo{title}{{A} {D}ata-{D}riven {W}avelet {E}stimator for
  {D}econvolution {D}ensity {E}stimations}.
\newblock \emph{\bibinfo{journal}{Result. Math.}}
  \textbf{\bibinfo{volume}{78}}, \bibinfo{pages}{16} (\bibinfo{year}{2023}).
\newblock \bibinfo{note}{Id/No 156}.

\bibitem{Guan2021}
\bibinfo{author}{Guan, Z.}
\newblock \bibinfo{title}{{F}ast {N}onparametric {M}aximum {L}ikelihood
  {D}ensity {D}econvolution {U}sing {{B}ernstein} {P}olynomials}.
\newblock \emph{\bibinfo{journal}{Stat. Sin.}} \textbf{\bibinfo{volume}{31}},
  \bibinfo{pages}{891--908} (\bibinfo{year}{2021}).

\bibitem{Delaigle2016}
\bibinfo{author}{Delaigle, A.} \& \bibinfo{author}{Hall, P.}
\newblock \bibinfo{title}{{M}ethodology for {N}on-{P}arametric {D}econvolution
  when the {E}rror {D}istribution is {U}nknown}.
\newblock \emph{\bibinfo{journal}{J. R. Stat. Soc., Ser. B, Stat. Methodol.}}
  \textbf{\bibinfo{volume}{78}}, \bibinfo{pages}{231--252}
  (\bibinfo{year}{2016}).

\bibitem{Kato2018}
\bibinfo{author}{Kato, K.} \& \bibinfo{author}{Sasaki, Y.}
\newblock \bibinfo{title}{{U}niform {C}onfidence {b}ands in {D}econvolution
  with {U}nknown {E}rror {D}istribution}.
\newblock \emph{\bibinfo{journal}{J. Econom.}} \textbf{\bibinfo{volume}{207}},
  \bibinfo{pages}{129--161} (\bibinfo{year}{2018}).

\bibitem{SilvermanPearson1973}
\bibinfo{author}{Silverman, H.} \& \bibinfo{author}{Pearson, A.}
\newblock \bibinfo{title}{On deconvolution using the discrete fourier
  transform}.
\newblock \emph{\bibinfo{journal}{IEEE Transactions on Audio and
  Electroacoustics}} \textbf{\bibinfo{volume}{21}}, \bibinfo{pages}{112--118}
  (\bibinfo{year}{1973}).

\bibitem{HallQiu2005}
\bibinfo{author}{Hall, P.} \& \bibinfo{author}{Qiu, P.}
\newblock \bibinfo{title}{Discrete-transform approach to deconvolution
  problems}.
\newblock \emph{\bibinfo{journal}{Biometrika}} \textbf{\bibinfo{volume}{92}},
  \bibinfo{pages}{135--148} (\bibinfo{year}{2005}).

\bibitem{deHoog2018}
\bibinfo{author}{De~Hoog, F.}, \bibinfo{author}{Davies, R.},
  \bibinfo{author}{Loy, R.} \& \bibinfo{author}{Anderssen, R.}
\newblock \bibinfo{title}{ in \textit{Discrete data {Fourier} deconvolution}}
  \bibinfo{pages}{305--316} (\bibinfo{publisher}{Cham: Springer},
  \bibinfo{year}{2018}).

\bibitem{HuckleSed2013}
\bibinfo{author}{Huckle, T.} \& \bibinfo{author}{Sedlacek, M.}
\newblock \bibinfo{title}{Data based regularization for discrete deconvolution
  problems}.
\newblock \emph{\bibinfo{journal}{BIT}} \textbf{\bibinfo{volume}{53}},
  \bibinfo{pages}{459--473} (\bibinfo{year}{2013}).

\bibitem{ZhigljavskyEtAl2016}
\bibinfo{author}{Zhigljavsky, A.}, \bibinfo{author}{Golyandina, N.} \&
  \bibinfo{author}{Gryaznov, S.}
\newblock \bibinfo{title}{Deconvolution of a discrete uniform distribution}.
\newblock \emph{\bibinfo{journal}{Stat. Probab. Lett.}}
  \textbf{\bibinfo{volume}{118}}, \bibinfo{pages}{37--44}
  (\bibinfo{year}{2016}).

\bibitem{axler2019measure}
\bibinfo{author}{Axler, S.}
\newblock \emph{\bibinfo{title}{Measure, Integration \& Real Analysis}}
  Graduate Texts in Mathematics (\bibinfo{publisher}{Springer International
  Publishing}, \bibinfo{year}{2019}).

\bibitem{wheeden2015measure}
\bibinfo{author}{Wheeden, R.}
\newblock \emph{\bibinfo{title}{Measure and Integral: An Introduction to Real
  Analysis, Second Edition}} Chapman \& Hall/CRC Pure and Applied Mathematics
  (\bibinfo{publisher}{CRC Press}, \bibinfo{year}{2015}).

\bibitem{Bohr1932}
\bibinfo{author}{Bohr, H.}
\newblock \emph{\bibinfo{title}{Fastperiodische Funktionen}} Ergebnisse der
  Mathematik und Ihrer Grenzgebiete. 1. Folge (\bibinfo{publisher}{Springer
  Berlin Heidelberg}, \bibinfo{year}{1932}).

\bibitem{robinson2020introduction}
\bibinfo{author}{Robinson, J.}
\newblock \emph{\bibinfo{title}{An Introduction to Functional Analysis}}
  (\bibinfo{publisher}{Cambridge University Press}, \bibinfo{year}{2020}).

\bibitem{olver2010nist}
\bibinfo{author}{Olver, F.} \emph{et~al.}
\newblock \emph{\bibinfo{title}{NIST Handbook of Mathematical Functions
  Hardback and CD-ROM}}  (\bibinfo{publisher}{Cambridge University Press},
  \bibinfo{year}{2010}).

\bibitem{Flajolet_Sedgewick_2009}
\bibinfo{author}{Flajolet, P.} \& \bibinfo{author}{Sedgewick, R.}
\newblock \emph{\bibinfo{title}{Analytic Combinatorics}}
  (\bibinfo{publisher}{Cambridge University Press}, \bibinfo{year}{2009}).

\bibitem{Klenke2020}
\bibinfo{author}{Klenke, A.}
\newblock \emph{\bibinfo{title}{Probability theory. {A} comprehensive course}}
  \bibinfo{edition}{3rd revised and expanded edition} edn.
\newblock Universitext (\bibinfo{publisher}{Cham: Springer},
  \bibinfo{year}{2020}).

\bibitem{Kaiser04102025}
\bibinfo{author}{Kaiser, H.}
\newblock \bibinfo{title}{An asymptotic expansion for the {M}ellin transform of
  a beta function and applications}.
\newblock \emph{\bibinfo{journal}{Integral Transforms and Special Functions}}
  \textbf{\bibinfo{volume}{0}}, \bibinfo{pages}{1--23} (\bibinfo{year}{2025}).

\bibitem{Lukacs1970}
\bibinfo{author}{Lukacs, E.}
\newblock \emph{\bibinfo{title}{Characteristic Functions}}
  \bibinfo{edition}{2.} edn (\bibinfo{publisher}{Charles Griffin \& Company
  Limited}, \bibinfo{year}{1970}).

\bibitem{Kaiser2025deconvolutionarbitrarydistributionfunctions}
\bibinfo{author}{Kaiser, H.}
\newblock \bibinfo{title}{Deconvolution of arbitrary distribution functions and
  densities} (\bibinfo{year}{2025}).
\newblock \bibinfo{note}{ArXiv:2510.04742 [math.ST]}.

\bibitem{Haukkanen1995}
\bibinfo{author}{Haukkanen, P.}
\newblock \bibinfo{title}{Some binomial inversions in terms of ordinary
  generating functions}.
\newblock \emph{\bibinfo{journal}{Publ. Math. Debr.}}
  \textbf{\bibinfo{volume}{47}}, \bibinfo{pages}{181--191}
  (\bibinfo{year}{1995}).

\end{thebibliography}

\end{document}